\numberwithin{equation}{section}
\newcommand{\SDR}[5]{\xymatrix{*[r]{#1} \ar@<1ex>[r]^-{#3} \ar@(ul,dl)[]_{#5} & #2 \ar@<1ex>[l]^-{#4}}}
\newcommand{\bigSDR}[5]{\xymatrix{*[r]{#1} \ar@<1ex>[rr]^-{#3} \ar@(ul,dl)[]_{#5} && #2 \ar@<1ex>[ll]^-{#4}}}
\newcommand{\bigbigSDR}[5]{\xymatrix{*[r]{#1} \ar@<1ex>[rrr]^-{#3} \ar@(ul,dl)[]_{#5} &&& #2 \ar@<1ex>[lll]^-{#4}}}
\newcommand{\adjunction}[4]{\xymatrix{ {#1} \ar@<1ex>[r]^-{#3} & {#2} \ar@<1ex>[l]^-{#4}}}
\newcommand{\bigadjunction}[4]{\xymatrix{ #1 \ar@<1ex>[rr]^-{#3} && #2 \ar@<1ex>[ll]^-{#4}}}
\newcommand{\hugeadjunction}[4]{\xymatrix{ #1 \ar@<1ex>[rrr]^-{#3} &&& #2 \ar@<1ex>[lll]^-{#4}}}
\newcommand{\varadjunction}[4]{\xymatrix{ #3 \colon #1 \ar@<1ex>[r] & #2 \colon #4 \ar@<1ex>[l]}}
\newcommand{\tensor}{\otimes}
\newcommand{\smsh}{\wedge}
\newcommand{\cotensor}{\square}
\newcommand{\Hom}{\operatorname{Hom}}
\newcommand{\Map}{\operatorname{Map}}
\newcommand{\ZZ}{\mathbb{Z}}
\newcommand{\as}{\text{!`}}
\newcommand{\Ho}{\operatorname{Ho}}
\newcommand{\Ch}{\mathsf{Ch}}
\newcommand{\Sp}{\mathsf{Sp}}
\newcommand{\Coring}{\mathsf{Coring}}
\newcommand{\CORING}{\mathsf{CORING}}
\newcommand{\VCAT}{\mathsf{CAT}_\VV}
\newcommand{\Alg}{\mathsf{Alg}}
\newcommand{\ALG}{\mathsf{ALG}}
\newcommand{\Coalg}{\mathsf{Coalg}}
\newcommand{\Ab}{\mathscr{A}b}
\newcommand{\kk}{\Bbbk}
\newcommand{\CC}{\mathscr{C}}
\newcommand{\DD}{\mathscr{D}}
\newcommand{\VV}{\mathscr{V}}
\newcommand{\UU}{\mathcal{U}}
\newcommand{\coequalizer}[5]{\xymatrix{{#1} \ar@<0.5ex>[r]^-{#4} \ar@<-0.5ex>[r]_-{#5} & {#2} \ar[r] & {#3}}}
\newcommand{\equalizer}[5]{\xymatrix{{#1} \ar[r] & {#2} \ar@<0.5ex>[r]^-{#4} \ar@<-0.5ex>[r]_-{#5} & {#3}}}
\newcommand{\Desc}{\operatorname{Desc}}
\newcommand{\Can}{\operatorname{Can}}
\newcommand{\Prim}{\operatorname{Prim}}
\newcommand{\vp}{\varphi}
\newcommand{\ve}{\varepsilon}
\newcommand{\Om}{\Omega}
\newcommand\si{s^{-1}}
\newtheorem{theorem}{Theorem}[section]
\newtheorem{proposition}[theorem]{Proposition}
\newtheorem{corollary}[theorem]{Corollary}
\newtheorem{lemma}[theorem]{Lemma}
\theoremstyle{definition}
\newtheorem{definition}[theorem]{Definition}
\newtheorem{example}[theorem]{Example}
\newtheorem{remark}[theorem]{Remark}
\newtheorem{axiom}[theorem]{Axiom}
\newtheorem{hypothesis}[theorem]{Hypothesis}
\newtheorem{notation}[theorem]{Notation}
\numberwithin{theorem}{section}
\title{Homotopical Morita theory for corings}
\author{Alexander Berglund and Kathryn Hess}
\address{Department of Mathematics, Stockholm University, SE-106 91 Stockholm, Sweden}
\email{alexb@math.su.se}
\address{MATHGEOM, \'Ecole Polytechnique F\'ed\'erale de Lausanne, CH-1015 Lausanne, Switzerland}
\email{kathryn.hess@epfl.ch}
\thanks{This material is based upon work partially supported by the National Science Foundation under Grant No.~0932078000 while the second author was in residence at the Mathematical Sciences Research Institute in Berkeley, California, during the Spring 2014 semester. The second author was also supported during this project by the Swiss National Science Foundation, Grant No. 200020\_144393.}
\begin{document}

\begin{abstract}
A coring $(A,C)$ consists of an algebra $A$ in a symmetric monoidal category and a coalgebra $C$ in the monoidal category of $A$-bimodules. Corings and their comodules arise naturally in the study of Hopf-Galois extensions and descent theory, as well as in the study of Hopf algebroids. In this paper, we address the question of when two corings $(A,C)$ and $(B,D)$ in a symmetric monoidal model category $\VV$ are homotopically Morita equivalent, i.e., when their respective categories of comodules $\VV_A^C$ and $\VV_B^D$ are Quillen equivalent.

The category of comodules over the trivial coring $(A,A)$ is isomorphic to the category $\VV_A$ of $A$-modules, so the question englobes that of when two algebras are homotopically Morita equivalent. We discuss this special case in the first part of the paper, extending previously known results.

To approach the general question, we introduce the notion of a \emph{braided bimodule} and show that adjunctions between $\VV_A$ and $\VV_B$ that lift to adjunctions between $\VV_A^C$ and $\VV_B^D$ correspond precisely to braided bimodules. We then give criteria, in terms of homotopic descent, for when a braided bimodule induces a Quillen equivalence between $\VV_A^C$ and $\VV_B^D$. In particular, we obtain criteria for when a morphism of corings induces a Quillen equivalence, providing a homotopic generalization of results by Hovey and Strickland on Morita equivalences of Hopf algebroids. As an illustration of the general theory, we examine homotopical Morita theory for corings in the category of chain complexes over a commutative ring.
\end{abstract}

\maketitle

\tableofcontents

\section{Introduction}
The study of equivalences between categories of comodules over coalgebras over a field was initiated by Takeuchi \cite{takeuchi} and is commonly referred to as \emph{Morita-Takeuchi theory}. The more general question of when categories of comodules over corings, $\VV_A^C$ and $\VV_B^D$, are strictly equivalent categories has also been studied, when $A$ and $B$ are are algebras over a commutative ring; see \cite{brzezinski-wisbauer} and the references therein.

In this paper, we address the homotopical, or derived, version of this question. Suppose that we have a notion of weak equivalence in $\VV_A^C$ such that the homotopy category $\Ho(\VV_A^C)$ (or derived category) may be formed.
\begin{quote}
\emph{When are $\VV_A^C$ and $\VV_B^D$ derived equivalent?}
\end{quote}
This question arose out of attempts to understand relations among homotopic Hopf-Galois extensions, Grothendieck descent, and Koszul duality. As it turns out, each of these notions can be expressed as asserting a derived equivalence between particular corings.

{\it Grothendieck descent.}
A morphism of rings $\varphi\colon R\to S$ satisfies effective homotopic descent if the canonical functor $\Can_\varphi\colon \VV_S \to \VV_S^{S\tensor_R S}$ is a derived equivalence, where $(S,S\tensor_R S)$ is the \emph{descent coring} associated to $\varphi$ (see \cite{hess:descent}).

{\it Koszul duality.}
For a quadratic algebra $A$ over a field $\Bbbk$, with quadratic dual coalgebra $A^{\as}$, the Koszul complex furnishes an adjunction between the categories $\VV_A^A$ and $\VV_\Bbbk^{A^{\as}}$. The algebra $A$ is a Koszul algebra \cite{priddy} if and only if this adjunction is a derived equivalence.

{\it Hopf-Galois extensions of ring spectra.}
A morphism $A\to B$ of commutative ring spectra, with $A$ weakly equivalent to the homotopy $H$-coinvariants for a coaction of a commutative Hopf algebra spectrum $H$ on $B$, is an $H$-Hopf-Galois extension if the change of corings adjunction $\VV_B^{B\smsh_A B} \to \VV_B^{B\smsh H}$, induced by the Galois map $h\colon B\smsh_A B \to B\smsh H$, is a derived equivalence, see \cite{rognes,hess:hhg}.

%


To accommodate these and other examples, we need a quite general setup. We employ Quillen's homotopical algebra and work with corings in a closed symmetric monoidal model category $\VV$ (see Appendix \ref{app:enriched}). In the examples above, $\VV$ is, respectively, the category $\Ch_\kk$ of chain complexes of $\kk$-vector spaces, and a suitable symmetric monoidal category of spectra.

It is a delicate problem to prove the existence of a model structure on $\VV_A^C$ with weak equivalences the underlying weak equivalences in $\VV$. At the moment, the most general results we know come from \cite{hkrs}, which establishes the existence of such a model structure for every coring $(A,C)$ for the following examples of base categories $\VV$.
\begin{itemize}
\item The category $\Ch_R$ of chain complexes over a commutative ring $R$, with weak equivalences either the quasi-isomorphisms or the chain homotopy equivalences.
\item The category $\Sp^\Sigma$ of symmetric spectra with stable equivalences as weak equivalences.
\end{itemize}


Let us also point out here that every symmetric monoidal category $\VV$ admits a model structure where the weak equivalences are the isomorphisms, and where every morphism is a fibration and a cofibration. This implies that every homotopical result proved in this paper specializes to its strict analog.

\subsection*{Results}
In this paper, we consider adjunctions between $\VV_A^C$ and $\VV_B^D$ that lift a given adjunction between the underlying module categories $\VV_A$ and $\VV_B$. We introduce the notion of a \emph{braided bimodule} and prove that every adjunction of the above form is governed by a braided bimodule (see Definition \ref{def:braided bimodule} and Theorem \ref{thm:relative classification}).

Next, we turn to the question of when a braided bimodule $X$ gives rise to a Quillen equivalence. Assuming the underlying $B$-module $X_B$ is dualizable, we show that there is a canonically associated $B$-coring $X_*(C)$, which we call the \emph{canonical coring}, and a morphism of $B$-corings $g_T\colon X_*(C)\to D$ (Proposition \ref{prop:universal coring}). The following special case of our main result (Theorem \ref{thm:bb}) characterizes when a braided bimodule gives rise to a Quillen equivalence in terms of effective homotopic descent and the morphism of corings $g_{T}$.

\begin{theorem}
Let $(A,C)$ be a flat coring in $\VV$, and let $(X,T)\colon (A,C)\to (B,D)$ be a braided bimodule. Suppose that $X_B$ is strictly dualizable, that $-\tensor_A X$ is a left Quillen functor, and that ${}_AX$ is strongly homotopy flat. The induced functor $T_*\colon \VV_A^C\to \VV_B^D$ is a Quillen equivalence if and only if
\begin{enumerate}
\item $X$ satisfies effective homotopic descent with respect to $C$, and
\item the morphism of $B$-corings $g_T\colon X_*(C) \to D$ is a copure weak equivalence.
\end{enumerate}
\end{theorem}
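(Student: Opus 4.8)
The plan is to use the universal property of the canonical coring to factor the comparison functor into two pieces, each governed by one of the two conditions. By Proposition \ref{prop:universal coring}, the braided bimodule $(X,T)$ produces the $B$-coring $X_*(C)$ together with the coring morphism $g_T\colon X_*(C)\to D$, and $T_*$ factors as a composite of left adjoints
\[
\VV_A^C \xrightarrow{\ \Can\ } \VV_B^{X_*(C)} \xrightarrow{\ (g_T)_*\ } \VV_B^D,
\]
where $\Can$ is the canonical comparison functor determined by the braided bimodule and $(g_T)_*$ is corestriction of coscalars along $g_T$. First I would verify that this is a factorization of \emph{Quillen} adjunctions: since $(g_T)_*$ is the identity on underlying $B$-modules it is automatically left Quillen, while the hypotheses that $-\tensor_A X$ is left Quillen and that ${}_AX$ is strongly homotopy flat ensure that $\Can$ is left Quillen and that its left derived functor is computed by the underlying derived tensor, so that $\LL T_* \cong \LL(g_T)_*\circ \LL\Can$.

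Next I would match each factor with one of the stated conditions. By the definition of effective homotopic descent, condition (1) is exactly the assertion that $\Can$ is a Quillen equivalence. For condition (2) I would invoke the analysis of change-of-corings adjunctions along a morphism of $B$-corings: using that $X_B$ is strictly dualizable together with the homotopy flatness hypotheses, the derived corestriction and its right adjoint (derived cotensor with $X_*(C)$ over $D$) are computed on underlying objects, and a direct comparison of derived unit and counit shows that $(g_T)_*$ is a Quillen equivalence if and only if $g_T$ is a copure weak equivalence. This biconditional is the morphism-of-corings special case of the theorem and should be isolated as a preliminary proposition.

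Granting these two identifications, the implication that $(1)\wedge(2)$ forces $T_*$ to be a Quillen equivalence is immediate, as a composite of Quillen equivalences is again one. The content is in the converse, and here the difficulty is genuine: a composite of left Quillen functors may be a Quillen equivalence with neither factor being one, so I cannot simply reverse the implication. My strategy would be to detect condition (2) on its own, by evaluating the derived counit of $T_*$ on the cofree $D$-comodules and using the universal property of $X_*(C)$ to recognize that this forces $g_T$ to be a weak equivalence of underlying $B$-bimodules, with the flatness hypotheses promoting it to a copure one; hence $(g_T)_*$ is a Quillen equivalence. Once $(g_T)_*$ is known to be a Quillen equivalence, the two-out-of-three property for Quillen equivalences applied to $T_* = (g_T)_*\circ \Can$ yields that $\Can$ is a Quillen equivalence, which is condition (1). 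The main obstacle I anticipate is exactly this detection step — extracting copureness of $g_T$ from the bare fact that $T_*$ is a derived equivalence — since it requires precise control of the derived counit of $T_*$ on cofree comodules and of the interaction between the canonical coring construction and the derived functors.
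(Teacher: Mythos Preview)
Your plan matches the paper's: factor $T_*$ through the canonical coring as $\Can_X$ followed by $(g_T)_*$, get the forward direction by composing Quillen equivalences, and for the converse first establish (2) from the derived counit of $T_*$ and then deduce (1) by two-out-of-three.

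The one place where you should sharpen the argument is the detection step for (2). You propose to evaluate the derived counit on cofree $D$-comodules to see that $g_T$ is a weak equivalence, and then invoke ``flatness hypotheses'' to \emph{promote} this to copureness. That promotion is not available in general: nothing in the paper shows that a weak equivalence of $B$-corings is automatically copure under the hypotheses at hand (indeed, Theorem~\ref{thm:copure} establishes such a result only in the chain-complex setting, under further conditions). The paper instead gets copureness in one stroke. Using the cotensor description of $T^*$ (Proposition~\ref{prop:R_X dualizable}), the counit at an \emph{arbitrary} fibrant $M\in\VV_B^D$ is identified with the composite
\[
\big(M\cotensor_D (X^\vee\tensor_A C)\big)\tensor_A X \;\longrightarrow\; M\cotensor_D X_*(C) \;\xrightarrow{\,M\cotensor_D g_T\,}\; M\cotensor_D D \cong M,
\]
and strong homotopy flatness of ${}_AX$ makes the first arrow a weak equivalence. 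Since the composite is the (homotopy) counit of a Quillen equivalence, it is a weak equivalence, hence so is $M\cotensor_D g_T$ for every fibrant $M$; this is exactly the statement that $g_T$ is copure. So the role of strong homotopy flatness is not to upgrade a known weak equivalence of corings, but to peel off the first factor of the counit and leave the copureness condition exposed for all fibrant $M$ simultaneously.
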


See Definitions \ref{defn:copure} and \ref{defn:homotopic descent} for the notions of effective homotopic descent and copure weak equivalence.

The question arises of whether one can find reasonable criteria for effective homotopic descent and copure weak equivalences. The following result (appearing as Theorem \ref{thm:homotopic descent} in the main text) is a vast generalization of Grothendieck's classical theorem on faithfully flat descent for homomorphisms of commutative rings.

\begin{theorem}[Homotopic faithfully flat descent]
If ${}_AX_B$ is a bimodule that is strictly dualizable and cofibrant as a right $B$-module and strongly homotopy flat as a left $A$-module, then the following are equivalent.
\begin{enumerate}
\item $X$ is homotopy faithful as a left $A$-module.
\item $X$ satisfies homotopic descent with respect to every flat coring $(A,C)$.
\end{enumerate}
\end{theorem}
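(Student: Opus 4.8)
The plan is to read ``homotopic descent with respect to $(A,C)$'' (Definition~\ref{defn:homotopic descent}) as the assertion that the comparison Quillen adjunction $\VV_A^C\rightleftarrows\VV_B^{X_*(C)}$, induced by $-\tensor_A X$ and the canonical coring of Proposition~\ref{prop:universal coring}, is a Quillen equivalence, and to prove the two implications separately. The underlying principle is homotopical comonadic descent: the comparison is a Quillen equivalence as soon as the functor $-\tensor_A^{\LL}X$ is homotopy conservative and preserves the homotopy limits of the relevant cobar cosimplicial objects. Homotopy faithfulness of ${}_AX$ will supply conservativity, while strong homotopy flatness will supply the preservation of homotopy limits; strict dualizability and cofibrancy of $X_B$ provide the structural scaffolding (the canonical coring, the adjoint $-\tensor_B X^*$, and a left Quillen comparison with cofibrant cobar terms).

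For $(2)\Rightarrow(1)$ I would specialize the hypothesis to the trivial coring $(A,A)$, which is flat and satisfies $\VV_A^A\cong\VV_A$, so that $X_*(A)$ is the descent coring $X^*\tensor_A X$ of the adjunction $-\tensor_A X\dashv -\tensor_B X^*$. Homotopic descent with respect to $(A,A)$ then says that the derived comparison $\Ho(\VV_A)\to\Ho(\VV_B^{X^*\tensor_A X})$ is an equivalence, hence in particular conservative. Composing with the forgetful functor $\VV_B^{X^*\tensor_A X}\to\VV_B$, which creates weak equivalences and so is homotopy conservative, exhibits $-\tensor_A^{\LL}X$ as a composite of conservative functors. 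Thus $-\tensor_A^{\LL}X$ reflects weak equivalences, which is precisely homotopy faithfulness of ${}_AX$.

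For $(1)\Rightarrow(2)$, fix an arbitrary flat coring $(A,C)$. The comparison is a Quillen adjunction because $X_B$ is cofibrant, and to see that it is a Quillen equivalence it suffices, by homotopical comonadicity, to check that its derived unit $\eta_M\colon M\to\operatorname{Tot}\Bar^\bullet(M)$ is a weak equivalence for every cofibrant $C$-comodule $M$ (the derived counit is then handled by the dual argument, or follows formally once conservativity and the requisite homotopy-limit preservation are in hand). Since the forgetful functor $\VV_A^C\to\VV_A$ reflects weak equivalences and, by hypothesis~$(1)$, so does $-\tensor_A^{\LL}X$, it is enough to prove that $\eta_M\tensor_A^{\LL}X$ is a weak equivalence. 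Here the classical mechanism reappears: applying $-\tensor_A X$ to the coaugmented cosimplicial object $\Bar^\bullet(M)$ yields a \emph{split} coaugmented cosimplicial object, the extra codegeneracy being furnished by the counit of $-\tensor_A X\dashv -\tensor_B X^*$, i.e.\ by the evaluation of the dualizable module $X_B$. A split coaugmented cosimplicial object has its coaugmentation a homotopy equivalence, so, once $-\tensor_A^{\LL}X$ is known to commute with $\operatorname{Tot}$, the map $\eta_M\tensor_A^{\LL}X$ is identified with this coaugmentation and is therefore a weak equivalence; conservativity then forces $\eta_M$ to be one.

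The commutation of $-\tensor_A^{\LL}X$ with the homotopy totalization $\operatorname{Tot}$ is the crux of the theorem and its main obstacle. A priori, tensoring preserves colimits rather than the cosimplicial limit defining $\operatorname{Tot}$, and it is exactly at this point that the classical argument invokes \emph{flatness} over and above faithfulness. In the present setting this commutation is the content of strong homotopy flatness of ${}_AX$, which I would take to be formulated so as to guarantee that $-\tensor_A^{\LL}X$ preserves the homotopy limits of cobar towers; cofibrancy of $X_B$ ensures that the terms of $\Bar^\bullet(M)$ are cofibrant, so that $\operatorname{Tot}$ genuinely computes the homotopy limit, and strict dualizability of $X_B$ is what produces both the canonical coring and the splitting above. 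The remaining delicate points---the naturality and homotopy-coherence of the extra codegeneracy, and the convergence of the cobar tower---are where the technical bookkeeping concentrates, but modulo these the two implications combine to give the stated equivalence, which is the homotopical incarnation of Grothendieck's faithfully flat descent and the descent input feeding the general criterion of Theorem~\ref{thm:bb}.
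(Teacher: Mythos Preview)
Your argument for $(2)\Rightarrow(1)$ by specializing to the trivial coring is sound and is not spelled out in the paper's main text (the version of the theorem there, Theorem~\ref{thm:homotopic descent}, only proves the forward implication). One small point: homotopy faithfulness asks that $-\tensor_A X$ reflect \emph{all} weak equivalences, not only those between cofibrant objects, but this gap is closed by homotopy flatness (pass to cofibrant replacements).

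Your argument for $(1)\Rightarrow(2)$, however, rests on a misreading of the hypothesis. In the paper, \emph{strongly homotopy flat} (Definition~\ref{defn:special-modules}) means that $-\tensor_A X$ preserves weak equivalences and that, for every \emph{finite} diagram $\Phi\colon\mathsf J\to\VV_A$, the comparison $(\lim_{\mathsf J}\Phi)\tensor_A X\to\lim_{\mathsf J}(\Phi\tensor_A X)$ is a weak equivalence. It says nothing about cosimplicial totalizations or cobar towers, so your crucial step---commuting $-\tensor_A^{\LL}X$ with $\operatorname{Tot}$---does not follow from the stated hypothesis. You flag this yourself (``which I would take to be formulated so as to guarantee\ldots''), but that is redefining the term, not proving the theorem.

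The paper's proof avoids all cosimplicial machinery by working with the \emph{counit} rather than the unit. Since ${}_AX$ is homotopy faithfully flat, $\Can_X$ preserves and reflects weak equivalences, so it suffices that the counit be a weak equivalence on fibrant $M\in\VV_B^{X_*(C)}$. By Proposition~\ref{prop:R_X dualizable} (using flatness of $(A,C)$ and dualizability of $X_B$), the right adjoint is the cotensor product $-\cotensor_{X_*(C)}(X^\vee\tensor_A C)$, which is a \emph{coreflexive equalizer}---a finite limit. The counit is then the comparison
\[
\big(M\cotensor_{X_*(C)}(X^\vee\tensor_A C)\big)\tensor_A X\;\longrightarrow\;M\cotensor_{X_*(C)}\big(X^\vee\tensor_A C\tensor_A X\big)=M\cotensor_{X_*(C)}X_*(C)\cong M,
\]
i.e.\ exactly a finite-limit comparison map, which is a weak equivalence by strong homotopy flatness. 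This is both shorter and uses only the hypothesis as actually stated.
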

The classical theorem is recovered by specializing to the trivial model structure on the category of abelian groups (where the weak equivalences are the isomorphisms), taking $C$ to be the trivial coring, and letting $X$ be the bimodule $B$.

Adjunctions governed by braided bimodules are not the most general kind of adjunctions between categories of comodules over corings, but they are enough for the applications to homotopic descent and homotopic Hopf-Galois extensions that motivated this work, cf.~\cite{berglund-hess:hhg}.

The paper is structured as follows. In Section \ref{sec:morita}, we discuss homotopical Morita theory for algebras in $\VV$. In Section \ref{sec:corings}, we introduce and study $\VV$-categories of comodules over corings. Here we introduce the two main new concepts: \emph{braided bimodules} (Section \ref{subsec:braided bimodules}) and the \emph{canonical coring} associated to a braided bimodule whose underlying bimodule is strictly dualizable (Section \ref{subsec:canonical coring}). Section \ref{sec:m-t} contains our main results on homotopical Morita theory for corings. In Section \ref{sec:examples}, we apply the general theory to the case when $\VV$ is the category of chain complexes over a commutative ring. In the appendices we recall necessary elements of the theory of enriched model categories and the theory of dualizable objects in a symmetric monoidal category.

\section{Homotopical Morita theory for algebras} \label{sec:morita}
Classical Morita theory provides criteria for equivalences of categories of modules over rings. In this section we answer the corresponding question in the homotopical setting: for $\VV$ a symmetric monoidal model category and $A$ and $B$ algebras (monoids) in $\VV$, when are the $\VV$-model categories $\VV_A$ and $\VV_B$ Quillen equivalent? Homotopical Morita theory for unbounded differential graded algebras was studied by Dugger-Shipley \cite{dugger-shipley}, and for ring spectra by Schwede and Shipley \cite{schwede-shipley2} and Shipley \cite{shipley}. For derived categories, see Rickard \cite{rickard:morita,rickard:derived}. We give a self-contained and relatively short account that subsumes known results in these settings. Our results also apply to some new cases, such as the unstable model categories of non-negatively graded differential graded algebras and topological spaces.

We begin this section by recalling and elaborating somewhat on the homotopy theory of modules in a monoidal model category.  
We completely characterize enriched adjunctions between module categories, then use this characterization to provide conditions under which such an adjunction is a Quillen adjunction.  Finally, we prove a homotopical version of the usual Morita theorem, giving criteria under which an adjunction between module categories is a Quillen equivalence.

\subsection{$\VV$-model categories of modules}
Let $(\VV,\tensor,\kk)$ be a monoidal category. An \emph{algebra} (also known as a \emph{monoid}) in $\VV$ is an object $A$ together with two maps $\mu\colon A\tensor A\rightarrow A$ and $\eta\colon \kk\rightarrow A$ that satisfy the usual associativity and unit axioms. We let $\Alg_{\VV}$ denote the category of algebras in $\VV$. Dually, the category of \emph{coalgebras} in $\VV$, which are endowed with a a coassociative comultiplication and counit, is denoted $\Coalg_{\VV}$.

A right module over $A$ is an object $M$ in $\VV$ together with a map $\rho\colon M\tensor A\rightarrow M$ satisfying the usual axioms for a right action. We let $\VV_A$ denote the category of right $A$-modules in $\VV$.  {We usually omit the multiplication and unit from the notation for an algebra and the action map from the notation for an $A$-module.}

\begin{proposition}\label{prop:Amod-Vstruct}  Let $\VV$ be a closed, symmetric monoidal category.
The category $\VV_A$ of right $A$-modules is a $\VV$-category.
\end{proposition}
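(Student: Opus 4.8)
The plan is to build the enriched hom-objects, units, and composition by hand from the closed structure of $\VV$, and then to read off the $\VV$-category axioms from those that already hold in $\VV$ regarded as enriched over itself. Write $\underline{\Hom}(M,N)$ for the internal hom object of $\VV$, equipped with its composition $c\colon \underline{\Hom}(N,P)\tensor\underline{\Hom}(M,N)\to\underline{\Hom}(M,P)$ and with the units $j_M\colon \kk\to\underline{\Hom}(M,M)$ naming identities; these make $\VV$ a $\VV$-category. For right $A$-modules $M,N$ with actions $\rho_M,\rho_N$, I would define $\VV_A(M,N)$ to be the equalizer in $\VV$
\begin{equation*}
\VV_A(M,N)\longrightarrow \underline{\Hom}(M,N)\rightrightarrows\underline{\Hom}(M\tensor A,N)
\end{equation*}
of the two morphisms that encode $A$-linearity: the first is $\underline{\Hom}(\rho_M,N)$, precomposition with the action on $M$; the second is $\underline{\Hom}(M\tensor A,\rho_N)\circ\big((-)\tensor A\big)$, where $(-)\tensor A\colon\underline{\Hom}(M,N)\to\underline{\Hom}(M\tensor A,N\tensor A)$ is the canonical tensoring map coming from the closed monoidal structure and $\underline{\Hom}(M\tensor A,\rho_N)$ is postcomposition with the action on $N$. (This construction uses that $\VV$ has equalizers, which holds throughout the setting of the paper.) Applying $\Hom_\VV(\kk,-)$ confirms that this equalizer selects precisely the $A$-linear maps, so that the underlying ordinary category of the enrichment we are building is $\VV_A$ itself.

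I would then obtain the enriched units and composition by corestricting those of $\VV$ along the equalizer inclusions. For the unit: both composites of $j_M$ with the defining pair of $\VV_A(M,M)$ name $\rho_M$ (internally, the statement that $\mathrm{id}_M$ is $A$-linear), so by the universal property $j_M$ factors uniquely through $\VV_A(M,M)$. For composition: precomposing $c$ with the two equalizer monomorphisms gives a map $\VV_A(N,P)\tensor\VV_A(M,N)\to\underline{\Hom}(M,P)$, and I would show that it equalizes the pair defining $\VV_A(M,P)$, hence factors uniquely through $\VV_A(M,P)\hookrightarrow\underline{\Hom}(M,P)$.

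This last factorization is the one real obstacle: it is the internal counterpart of the obvious set-level fact that $g\circ f$ is $A$-linear whenever $f$ and $g$ are, i.e.\ $(g\circ f)\circ\rho_M=\rho_P\circ\big((g\circ f)\tensor A\big)$. Establishing it requires a diagram chase in $\VV$ that feeds the two defining equalizer relations for $\VV_A(M,N)$ and $\VV_A(N,P)$ into the naturality of $c$ and the compatibility of the tensoring maps with the actions $\rho_M,\rho_N,\rho_P$; the functoriality and interchange laws of the internal hom carry out the bookkeeping. Once units and composition are constructed, no further computation is needed: each associativity and unit axiom for $\VV_A$ is an equality of two maps with codomain $\VV_A(M,P)$, and because the equalizer inclusion $\VV_A(M,P)\hookrightarrow\underline{\Hom}(M,P)$ is a monomorphism, it suffices to verify the equality after composing with this inclusion, where it reduces to the corresponding axiom already satisfied by $\underline{\Hom}(-,-)$ in the self-enriched category $\VV$. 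I would close by noting that this is an instance of the general principle that the Eilenberg--Moore category of a $\VV$-enriched monad---here $-\tensor A$ on $\VV$---inherits a canonical $\VV$-enrichment.
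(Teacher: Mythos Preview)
Your construction of the enriched hom-objects $\VV_A(M,N)$ as an equalizer is exactly the paper's definition of $\Map_A(M,N)$, and your verification that composition and units descend from those of $\VV$ via the equalizer inclusions is a correct (and more detailed) treatment of what the paper leaves as an exercise. So on the enrichment side you and the paper agree.

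There is, however, a genuine gap relative to the paper's conventions. In Appendix~\ref{app:enriched} the paper declares that a category has a ``$\VV$-structure'' (and the term ``$\VV$-category'' is used synonymously throughout) when it is \emph{tensored, cotensored, and enriched} in $\VV$, with the compatibility bijections $\CC(K\tensor X,Y)\cong\VV(K,\Map_\CC(X,Y))\cong\CC(X,Y^K)$. The paper's proof of this proposition accordingly spends most of its effort on the tensor and cotensor: it puts the $A$-action $1\tensor\rho$ on $K\tensor M$ and defines the action on $M^K$ as the adjoint of $K\tensor M^K\tensor A\xrightarrow{\mathrm{ev}\tensor 1}M\tensor A\xrightarrow{\rho}M$, noting that the forgetful functor to $\VV$ then preserves tensors and cotensors. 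Your proposal omits this entirely and treats ``$\VV$-category'' as meaning only ``enriched in $\VV$''. In the paper's sense the statement is not yet proved; you should add the (easy) constructions of $K\tensor M$ and $M^K$ in $\VV_A$ and check the defining adjunctions. Your closing remark about Eilenberg--Moore categories of $\VV$-monads is apt and would in fact handle all three pieces at once, but as written the body of your argument does not invoke it for the tensor and cotensor.
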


\begin{proof}
For an $A$-module $(M, \rho)$ and an object $K$ in $\VV$, the objects in $\VV$ underlying the tensor product $K\tensor (M,\rho)$ and the cotensor product $(M, \rho)^K$ are  the tensor and cotensor products of the underlying objects in $\VV$.  The right $A$-action on $K\tensor M$ is given by
$$1\tensor \rho\colon K\tensor M\tensor A \rightarrow K\tensor M,$$
while the right action
$$M^K \tensor A \rightarrow M^K$$ 
is adjoint to the composite 
$$K\tensor M^K\tensor A\xrightarrow {\mathrm{ev}\otimes 1} M\tensor A \xrightarrow\rho M.$$
 The forgetful functor $\UU \colon \VV_A\rightarrow \VV$ therefore preserves the tensor and cotensor structures. 
 
Given two $A$-modules  $(M, \rho_{M})$ and $(N, \rho_{N})$, the enrichment $\Map_A(M,N)$ is defined in terms of an equalizer diagram,
$$
\equalizer{\Map_A(M,N)}{\Map(M,N)}{\Map(M\tensor A,N)}{}{},
$$
where the top map is induced by $\rho_{M}\colon M\tensor A\rightarrow M$ and the bottom map is the composite 
$$\Map(M,N)\xrightarrow{-\otimes A} \Map(M\tensor A,N\tensor A)\xrightarrow{(\rho_{N})_{*}} \Map(M\tensor A,N).$$
It is an easy exercise, which we leave it to the reader, to check that these structures are compatible.
\end{proof}

The $\VV$-structure described above interacts well with model category structure, when the monoidal and model category structures are appropriately compatible, e.g., if $\VV$ is a monoidal model category \cite[Definition 3.1]{schwede-shipley}.
Recall that if $\mathscr M$ and $\mathscr N$ are model categories, the model category structure on $\mathscr N$ is \emph{right-induced} by an adjunction
$$\adjunction{\mathscr M}{\mathscr N} LR$$
if the right adjoint $R$ preserves and reflects both weak equivalences and fibrations. 

\begin{theorem}\cite{schwede-shipley} \label{thm:module model}
Let $\VV$ be a symmetric monoidal model category. If $\VV$ is cofibrantly generated and satisfies the monoid axiom, and every object of $\VV$ is small relative to the whole category, then the category $\VV_A$ of right $A$-modules admits a model structure that is right induced from the adjunction
$$\bigadjunction{\VV}{\VV_A}{-\tensor A}{\UU}.$$
\end{theorem}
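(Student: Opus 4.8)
The plan is to establish the right-induced model structure on $\VV_A$ by appealing to the standard transfer (lifting) theorem for cofibrantly generated model categories. The adjunction in question,
$$\bigadjunction{\VV}{\VV_A}{-\tensor A}{\UU},$$
is the free-forgetful adjunction, where the right adjoint $\UU$ is the forgetful functor. The goal is to define weak equivalences and fibrations in $\VV_A$ to be exactly those maps that $\UU$ sends to weak equivalences and fibrations in $\VV$, and then to verify that this data assembles into a model structure.

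First I would set $I_A = \{i\tensor A : i\in I\}$ and $J_A = \{j\tensor A : j\in J\}$, where $I$ and $J$ are the generating cofibrations and generating acyclic cofibrations of $\VV$. The transfer theorem (due to Crans, and in this form essentially Schwede--Shipley \cite[Lemma 2.3]{schwede-shipley}) says that the right-induced structure exists provided two conditions hold: the smallness hypothesis needed to run the small object argument, and the crucial acyclicity condition that every relative $J_A$-cell complex is sent by $\UU$ to a weak equivalence in $\VV$. The smallness is immediate from the hypothesis that every object of $\VV$ is small relative to the whole category, since $\UU$ creates the relevant filtered colimits and $-\tensor A$ preserves smallness. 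Once both conditions are checked, the transfer theorem furnishes the model structure with $I_A$ and $J_A$ as generating (acyclic) cofibrations, and with $\UU$ preserving and reflecting weak equivalences and fibrations by construction.

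\textbf{The hard part} is verifying the acyclicity condition, and this is precisely where the monoid axiom is used. Unwinding the definitions, a relative $J_A$-cell complex is a transfinite composite of pushouts, in $\VV_A$, of maps of the form $j\tensor A$ with $j\in J$ an acyclic cofibration of $\VV$. Applying $\UU$ and using that the forgetful functor preserves colimits (it is a left adjoint for the cotensor, and in any case creates the relevant colimits over $\VV$), such a map becomes, in $\VV$, a transfinite composite of pushouts of maps in the class $J\tensor A = \{j\tensor A : j\in J\}$. The monoid axiom \cite[Definition 3.3]{schwede-shipley} asserts exactly that every map built from $J\tensor \VV$ (the acyclic cofibrations tensored with arbitrary objects) by pushout and transfinite composition is a weak equivalence. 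Since $J\tensor A\subseteq J\tensor\VV$, the monoid axiom applies and delivers the acyclicity condition. I would be careful to note that the pushouts and transfinite composites are formed in $\VV_A$ but become the corresponding constructions in $\VV$ after applying $\UU$, which is the step that genuinely requires $\UU$ to preserve these colimits.

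It remains to record that the resulting model structure is compatible with the $\VV$-enrichment of Proposition \ref{prop:Amod-Vstruct}, making $\VV_A$ a $\VV$-model category; this follows formally from the pushout-product axiom in $\VV$ together with the fact that the enriched tensor and cotensor on $\VV_A$ are computed on underlying objects, as established in the proof of Proposition \ref{prop:Amod-Vstruct}. Since the statement as given only asserts the existence of the right-induced model structure, the essential content is the transfer theorem plus the monoid-axiom verification; the enrichment compatibility is a routine consequence of the compatibility already present in $\VV$.
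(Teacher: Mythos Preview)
The paper does not supply its own proof of this theorem; it is stated with a citation to Schwede--Shipley and no proof environment follows. Your outline is correct and is essentially the original Schwede--Shipley argument (transfer via the small object argument, with the monoid axiom furnishing the acyclicity of relative $J_A$-cell complexes), so there is nothing to compare against beyond noting that your sketch faithfully reproduces the cited source.
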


\begin{remark} The forgetful functor $\UU\colon \VV_A\rightarrow \VV$ is a tensor functor, so it follows from Proposition \ref{prop:v-induced} that, when it exists, the right-induced model structure on $\VV_A$ is $\VV$-structured.
\end{remark}

\begin{remark} It is, of course, also true that the category ${}_{A}\VV$ of left $A$-modules admits a right-induced model category structure under the hypotheses of the theorem above, because ${}_A\VV$ is isomorphic to the category $\VV_{A^{\mathrm{op}}}$ of right modules over the opposite algebra $A^{\mathrm{op}}$.
\end{remark}

\begin{hypothesis}\label{hypo:module} Henceforth, we assume always that $\VV$ is a model category equipped with a symmetric monoidal structure such that for all algebras $A$, the categories  $\VV_{A}$ and ${}_{A}\VV$ of right  and left $A$-modules admit a model category structures such that the forgetful functor in the adjunctions
$$\bigadjunction{\VV}{\VV_A}{-\tensor A}{\UU}$$
and
$$\bigadjunction{\VV}{{}_{A}\VV}{A\tensor -}{\UU}$$
create the weak equivalences in $\VV_{A}$ and ${}_{A}\VV$.
\end{hypothesis}


We recall the definition of tensor products over $A$, and note that they are defined as long as $\VV$ admits reflexive coequalizers.

\begin{definition} Given right and left $A$-modules $M_A$ and ${}_AN$, with structure maps $\rho\colon M\tensor A\rightarrow M$ and $\lambda\colon A\tensor N\rightarrow N$, their \emph{tensor product over $A$} is the object $M\tensor_A N$ in $\VV$ defined by the following coequalizer diagram:
$$
\coequalizer{M\tensor A\tensor N}{M\tensor N}{M\tensor_A N}{\rho\tensor 1}{1\tensor \lambda}.
$$
\end{definition}

The special classes of modules defined below, which are characterized in terms of tensoring over $A$, play an important role in this article.

\begin{definition}\label{defn:special-modules} Let $\VV$ be a symmetric monoidal model category satisfying Hypothesis \ref{hypo:module}.
A left $A$-module $M$ is called
\begin{itemize}
\item \emph{homotopy flat} if $-\tensor_A M\colon\VV_{A} \to \VV$ preserves weak equivalences;
\item \emph{strongly homotopy flat} if it is homotopy flat and for every finite category $\mathsf J$ and every functor $\Phi\colon \mathsf J \to \VV_{A}$, the natural map
$$(\lim_{\mathsf J} \Phi)\otimes_{A} M \to  \lim_{\mathsf J} (\Phi\otimes_{A} M)$$
is a weak equivalence in $\VV$;
\item \emph{homotopy faithful} if $-\tensor_A M\colon\VV_{A} \to \VV$ reflects weak equivalences;
\item \emph{homotopy faithfully flat} if it is both homotopy faithful and strongly homotopy flat;
\item \emph{homotopy projective} if $\Map_A(M,-)\colon {}_A\VV \to \VV$ preserves weak equivalences;
\item\emph{homotopy cofaithful} if $\Map_A(M,-)\colon {}_A\VV \to \VV$ reflects weak equivalences.
\end{itemize}
\end{definition}

There is, of course, an analogous definition for right $A$-modules.

\begin{remark}\label{rmk:retract-faithful} Since a retract of a weak equivalence is a weak equivalence, any $A$-module of which a retract is homotopy faithful (respectively, homotopy cofaithful) is itself homotopy faithful (respectively, homotopy cofaithful).  In particular, if $A$ is a retract of an $A$-module $M$, then $M$ is  homotopy faithful and homotopy cofaithful. For particular choices of base category $\VV$, more can be said about when a module is homotopy faithful or cofaithful; see Remark \ref{rem:classical} for the case of abelian groups, and Proposition \ref{prop:ch-special-modules} for the case of chain complexes.
\end{remark}

\subsection{Bimodules and Quillen adjunctions}
In this section we characterize completely adjunctions between enriched module categories and provide criteria under which these adjunctions are Quillen pairs.

Let $(\VV,\tensor,\kk)$ be a symmetric monoidal model category satisfying Hypothesis \ref{hypo:module}. Given algebras $A$ and $B$ in $\VV$ and a bimodule ${}_A X_B$, there is a $\VV$-adjunction
$$\bigadjunction{\VV_A}{\VV_B}{-\tensor_A X}{\Map_B(X,-)},\quad -\tensor_A X \dashv \Map_B(X,-),$$
where $\VV_{A}$ and $\VV_{B}$ are endowed with the $\VV$-structures of Proposition \ref{prop:Amod-Vstruct}. Let us say that a $\VV$-adjunction,
\begin{equation} \label{eq:v-adjunction}
\adjunction{\VV_A}{\VV_B}{F}{G},\quad F\dashv G,
\end{equation}
is \emph{governed} by a bimodule ${}_A X_B$ if the $\VV$-functors $F$ and $-\tensor_A X$ are isomorphic.

{The following is an enriched version of the classical Eilenberg-Watts theorem \cite{eilenberg}, \cite{watts}.}

\begin{proposition} \label{prop:classification} Let $\VV$ be a symmetric monoidal category that admits all reflexive coequalizers. Every $\VV$-adjunction between $\VV_A$ and $\VV_B$ is governed by an $A$-$B$-bimodule $X$.
\end{proposition}

\begin{proof}
Given a $\VV$-adjunction as in \eqref{eq:v-adjunction}, let $X=F(A)$. A priori, $X$ is only a right $B$-module, but since $F$ is a tensor functor we can endow $X$ with a left $A$-action $\lambda_{}\colon A\tensor X \to X$, equal to the composite
$$
\xymatrix{A\tensor X = A\tensor F(A) \ar[r]^-{\alpha_{A,A}^{-1}} & F(A\tensor A) \ar[r]^-{F(\mu)} & F(A) = X,}
$$
where $\alpha_{K,M}: F(K\otimes M) \xrightarrow \cong K\otimes F(M)$ is the natural isomorphism of Proposition \ref{prop:V-adjunction}, and $\mu:A\tensor A \to A$ is the multiplication map. 
We leave it to reader to check that $X$ is indeed an $A$-$B$-bimodule when endowed with this left $A$-action. 

For any right $A$-module $M$ the canonical isomorphism $M\tensor_A A \cong M$ may be expressed as a coequalizer diagram in $\VV_A$:
$$\xymatrix{M\tensor A\tensor A \ar@<0.5ex>[r]^-{\rho\tensor 1} \ar@<-0.5ex>[r]_-{1\tensor \mu} & M\tensor A \ar[r]^-{\rho_M} & M.}$$
Being a left adjoint, the functor $F$ takes this to a coequalizer diagram in $\VV_B$, which is the top row in the commuting diagram below.
$$
\xymatrix{
F(M\tensor A\tensor A) \ar[d]^-\cong_{\alpha_{M\tensor A, A}} \ar@<0.5ex>[rr]^-{F(\rho\tensor 1)} \ar@<-0.5ex>[rr]_-{F(1\tensor \mu)}&& F(M\tensor A) \ar[d]^-\cong_{\alpha_{M, A}} \ar[r]^-{F(\rho)} & F(M) \ar[d]^-{\therefore \cong} \\
M\tensor A\tensor X \ar@<0.5ex>[rr]^-{\rho\tensor 1}  \ar@<-0.5ex>[rr]_-{1\tensor \lambda} && M\tensor X \ar[r] & M\tensor_A X.
}$$
The two left-hand squares commute because $\alpha$ is a natural transformation and, in the case of the square involving $F(1\otimes \mu_{A})$ and $1\otimes \lambda _{X}$, because $$\alpha_{M\otimes A, A}=(1\otimes \alpha_{A,A})\alpha_{M, A\otimes A}.$$ 
The left and middle vertical maps are isomorphisms because $F$ is a tensor functor. The bottom row is the coequalizer that defines the tensor product $M\tensor_A X$. The desired natural isomorphism 
$F(M) \cong M\tensor_A X$ follows from the universal property of coequalizers.
\end{proof}

When Hypothesis \ref{hypo:module} holds, it is natural to ask when the $\VV$-adjunction between $\VV_A$ and $\VV_B$ governed by a bimodule ${}_AX_B$ is a Quillen pair.  Here is an example of such a situation.

\begin{proposition} \label{prop:x}  Let $\VV$ be a symmetric monoidal model category satisfying Hypothesis \ref{hypo:module}.
Let $A$ and $B$ be algebras in $\VV$ such that fibrations in $\VV_{A}$ and $\VV_{B}$  are created in  $\VV$, and let ${}_AX_B$ be a bimodule. If  $X$ is cofibrant as a right $B$-module, then the adjunction governed by $X$,
\begin{equation} \label{eq:quillen adjunction}
\bigadjunction{\VV_A}{\VV_B}{-\tensor_A X}{\Map_B(X,-)},
\end{equation}
is a Quillen adjunction. The converse holds if the unit $\kk$ is cofibrant in $\VV$.
\end{proposition}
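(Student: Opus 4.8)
The plan is to establish the forward implication by showing that the right adjoint $G = \Map_B(X,-)\colon \VV_B \to \VV_A$ is a right Quillen functor, and then to deduce the converse from the left Quillen property of $F = -\tensor_A X$. For the forward direction, I would first record that, under Hypothesis~\ref{hypo:module}, both $\VV_A$ and $\VV_B$ are $\VV$-model categories (Proposition~\ref{prop:v-induced}), so the enriched pushout--product axiom (see Appendix~\ref{app:enriched}) is available. Since by hypothesis fibrations in $\VV_A$ are created by the forgetful functor $\UU\colon \VV_A\to \VV$, and weak equivalences are created by $\UU$ as well, acyclic fibrations in $\VV_A$ are created in $\VV$ too; hence it suffices to prove that $\UU\circ G\colon \VV_B \to \VV$ preserves fibrations and acyclic fibrations. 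The key identification is that the $\VV$-object underlying the $A$-module $\Map_B(X,N)$ is exactly the enriched hom-object $\Map_B(X,N)$ computed in the $\VV$-category $\VV_B$, so that $\UU\circ G$ coincides with the enriched functor $\Map_B(X,-)\colon \VV_B\to \VV$.

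The main step is then a direct application of the pushout--product axiom to the cofibration $\emptyset\to X$ (which encodes the cofibrancy of $X$ in $\VV_B$) together with an arbitrary fibration $p\colon N\to N'$ in $\VV_B$. Because $\Map_B(\emptyset,N)$ is the terminal object of $\VV$, the resulting pullback-corner map degenerates to $p_*\colon \Map_B(X,N)\to \Map_B(X,N')$, and the axiom asserts that this is a fibration in $\VV$, acyclic whenever $p$ is. Consequently $\UU\circ G$ preserves fibrations and acyclic fibrations, so $G$ does too, and $F\dashv G$ is a Quillen adjunction.

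For the converse, assume $\kk$ is cofibrant and the adjunction is Quillen, so that $F=-\tensor_A X$ is left Quillen. Since fibrations and weak equivalences in $\VV_A$ are created by $\UU$, the free functor $-\tensor A\colon \VV\to \VV_A$ is left Quillen; as $\kk$ is cofibrant, the right $A$-module $A\cong \kk\tensor A$ is therefore cofibrant. Applying the left Quillen functor $F$ and the natural isomorphism $A\tensor_A X\cong X$ of right $B$-modules shows that $X\cong F(A)$ is cofibrant in $\VV_B$, as desired. The one point requiring genuine care throughout is the identification in the first paragraph: that the object of $\VV$ underlying the $A$-module $\Map_B(X,N)$ is the $\VV_B$-enriched hom, which is what allows the enriched (pushout--product) structure of $\VV_B$ to be transported across the created fibrations of $\VV_A$; once this is in hand, both implications reduce to routine applications of the model-category axioms.
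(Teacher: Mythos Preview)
Your argument is correct and follows essentially the same strategy as the paper. Both proofs reduce the question to the composite adjunction between $\VV$ and $\VV_B$ (using that fibrations and weak equivalences in $\VV_A$ are created in $\VV$) and then invoke Axiom~\ref{axiom:sm7} in $\VV_B$ against the cofibration $\emptyset\to X$; the paper checks the left adjoint $-\tensor X\colon \VV\to \VV_B$ via the tensor form of the axiom, while you check the right adjoint $\Map_B(X,-)\colon \VV_B\to \VV$ via the enriched-hom (pullback--corner) form, which is equivalent. For the converse the paper writes $X\cong \kk\tensor X$ directly, whereas you route through the cofibrancy of $A\cong \kk\tensor A$ in $\VV_A$ and then apply $F$; both are immediate.
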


Combining Propositions \ref{prop:classification} and \ref{prop:x}, we obtain the following characterization of enriched adjunctions between module categories that are Quillen pairs.

\begin{corollary}\label{cor:FG-QA} Let $\VV$ be a symmetric monoidal model category satisfying Hypothesis \ref{hypo:module} and such that the unit $\kk$ is cofibrant in $\VV$.
Let $A$ and $B$ be algebras in $\VV$ such that fibrations in $\VV_{A}$ and $\VV_{B}$  are created in  $\VV$.  A $\VV$-adjunction
$$\adjunction{\VV_{A}}{\VV_{B}}FG$$
is a Quillen adjunction if and only if $F(A)$ is cofibrant in $\VV_{B}$.
\end{corollary}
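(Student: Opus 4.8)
The plan is to deduce this directly by combining the enriched Eilenberg--Watts theorem (Proposition \ref{prop:classification}) with the Quillen-adjunction criterion (Proposition \ref{prop:x}). The first proposition tells us that every $\VV$-adjunction $F\dashv G$ between $\VV_A$ and $\VV_B$ is governed by some bimodule ${}_AX_B$; inspecting its proof, one sees that the governing bimodule may be taken to be $X = F(A)$, with $F$ naturally isomorphic to $-\tensor_A X$. The second proposition characterizes, under the standing hypotheses, precisely when the adjunction governed by a bimodule is a Quillen pair, namely when the underlying right $B$-module $X_B$ is cofibrant.

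First I would invoke Proposition \ref{prop:classification} to write $F \cong -\tensor_A X$ for the $A$-$B$-bimodule $X = F(A)$ constructed there. The key observation is that the right $B$-module underlying this governing bimodule is exactly $F(A)$, regarded as an object of $\VV_B$ (the left $A$-action added in the proof does not affect the underlying right module).

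Next, I would apply Proposition \ref{prop:x} to the bimodule $X$. Since the hypotheses of the corollary include that fibrations in $\VV_A$ and $\VV_B$ are created in $\VV$ and that the unit $\kk$ is cofibrant, both implications of Proposition \ref{prop:x} are available, so the adjunction governed by $X$ is a Quillen adjunction if and only if $X_B$ is cofibrant. Combining this with the identification $X_B = F(A)$ yields that $F\dashv G$ is a Quillen adjunction if and only if $F(A)$ is cofibrant in $\VV_B$, which is precisely the assertion.

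I expect no genuine obstacle here, as the result is a formal consequence of the two cited propositions. The only point deserving a word of care is the identification of the right $B$-module underlying the governing bimodule with $F(A)$, so that the cofibrancy condition supplied by Proposition \ref{prop:x} translates literally into cofibrancy of $F(A)$ in $\VV_B$; this is immediate from the construction of $X$ in the proof of Proposition \ref{prop:classification}.
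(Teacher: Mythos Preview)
Your proposal is correct and follows exactly the approach indicated in the paper, which simply states that the corollary is obtained by combining Propositions \ref{prop:classification} and \ref{prop:x}. Your explicit identification of the governing bimodule as $X = F(A)$ and the observation that both directions of Proposition \ref{prop:x} are available under the stated hypotheses are precisely the points needed to spell out the argument.
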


\begin{proof}[Proof of Proposition \ref{prop:x}]
We have a commutative diagram of adjunctions
$$\xymatrix{
\VV_A \ar@<0.5ex>[rr]^-{-\tensor_A X} \ar@<0.5ex>[dd]^-\UU && \VV_B \ar@<0.5ex>[ll] \ar@<-0.5ex>[ddll] \\ \\
\VV \ar@<0.5ex>[uu]^-{-\tensor A} \ar@<-0.5ex>[uurr]_-{-\tensor X}
}$$
Since both the fibrations and the weak equivalences in the module categories are created in  $\VV$, the horizontal adjunction is a Quillen adjunction if and only if the diagonal one is. This in turn happens if and only if for every (trivial) cofibration $i\colon K\rightarrow L$ in $\VV$, the induced map $i\tensor 1\colon K\tensor X\rightarrow L\tensor X$ is a (trivial) cofibration in $\VV_B$. If $X$ is $B$-cofibrant, this condition is satisfied since $\VV_B$ satisfies Axiom \ref{axiom:sm7}. Conversely, if the unit $\kk$ in $\VV$ is cofibrant, then $X\cong \kk\tensor X$ must be cofibrant as a right $B$-module.
\end{proof}

\begin{example} \label{example:morphism}
A morphism of algebras $\vp\colon A \to B$ in $\VV$ induces a natural $A$-$B$-bimodule structure on $B$.  Since $\Map_{B}(B,-)=\vp^{*}\colon \VV_{B}\to \VV_{A}$, the restriction-of-scalars functor, adjunction (\ref{eq:quillen adjunction}) for $X=B$ is exactly the extension/restriction-of-scalars adjunction
\begin{equation}\label{eq:ext-res-adj}
\bigadjunction{\VV_{A}}{\VV_{B}}{-\otimes_{A}B}{\vp^{*}}.
\end{equation}
The right adjoint $\varphi^*$ preserves and reflects weak equivalences by Hypothesis \ref{hypo:module}, so $\varphi^*$ is a right Quillen functor if it also preserves fibrations, which is a somewhat weaker condition than requiring that fibrations of modules are created in $\VV$.  In particular, if fibrations in $\VV_{A}$ and $\VV_{B}$ are created in $\VV$, then $\vp^{*}$ is a right Quillen functor.
\end{example}

\begin{example} \label{ex:morphism2}
The restriction-of-scalars functor $\varphi^*$ also admits a right adjoint,
$$
\bigadjunction{\VV_{B}}{\VV_{A}}{\vp^{*}}{\Map_A(B,-)}.
$$
This adjunction is governed by the bimodule ${}_B B_A$, where $A$ acts on the right via $\varphi$. In particular, if fibrations in $\VV_{A}$ and $\VV_{B}$  are created in  $\VV$, and $B$ is cofibrant as a right $A$-module, then $\varphi^*$ is a left Quillen functor by Proposition \ref{prop:x}. As a special case, note that the category $\VV$ may be identified with the category $\VV_\kk$ of right $\kk$-modules. The unit map $\eta_A\colon \kk \rightarrow A$ governs the forgetful functor $\eta_A^* = \UU\colon \VV_A\rightarrow \VV$. In particular, if $A$ is cofibrant as an object of $\VV$, then all cofibrant $A$-modules are also cofibrant as objects of $\VV$.
\end{example}

\subsection{Dualizable bimodules and Quillen equivalences}\label{sec:dual-bimod}
We now address the question of when the Quillen adjunction governed by a bimodule is a Quillen equivalence.

We begin by analyzing when the restriction-of-scalars adjunction associated to a morphism of algebras induces a Quillen equivalence. To this end, we introduce the concept of a \emph{pure weak equivalence}.

\begin{definition} \label{def:pure we}
A morphism of left $A$-modules $f\colon N\to N'$ is a \emph{pure weak equivalence} if the induced map $1\tensor f\colon M\tensor_A N\rightarrow M\tensor_A N'$ is a weak equivalence for all cofibrant right $A$-modules $M$.
\end{definition}

Under reasonable conditions, all weak equivalences are pure.

\begin{definition}\label{defn:CHF}
We say that $\VV$ satisfies the \emph{CHF hypothesis} if for every algebra $A$ in $\VV$, every cofibrant right $A$-module is homotopy flat (cf. Definition \ref{defn:special-modules}).
\end{definition}

As pointed out in \cite[\S 4]{schwede-shipley}, the CHF hypothesis holds in many monoidal model categories of interest, such as the categories of simplicial sets, symmetric spectra, (bounded or unbounded) chain complexes over a commutative ring, and $S$-modules.

\begin{proposition} \label{prop:pure we alt}
Let $\VV$ be a symmetric monoidal model category satisfying Hypothesis \ref{hypo:module}. If $\VV$ satisfies the CHF hypothesis, then the notions of pure weak equivalence and weak equivalence coincide.
\end{proposition}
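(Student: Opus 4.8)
The plan is to prove the two implications separately, invoking the CHF hypothesis in each. Throughout I would keep in mind that, by Hypothesis~\ref{hypo:module}, a morphism of left (or right) $A$-modules is a weak equivalence exactly when its underlying morphism in $\VV$ is one, and that there is a natural isomorphism $A\tensor_A N\cong N$. The implication ``weak equivalence $\Rightarrow$ pure weak equivalence'' is immediate: if $f\colon N\to N'$ is a weak equivalence of left $A$-modules and $M$ is an arbitrary cofibrant right $A$-module, then by the CHF hypothesis $M$ is homotopy flat, so $M\tensor_A-$ preserves weak equivalences and $M\tensor_A f$ is a weak equivalence; since $M$ was arbitrary, $f$ is pure.

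The substance is the converse, ``pure $\Rightarrow$ weak equivalence,'' and here lies the main obstacle. One cannot simply take $M=A$ in Definition~\ref{def:pure we} to recover $f$ from $A\tensor_A f\cong f$, because the right $A$-module $A$ is cofibrant only when the unit $\kk$ is cofibrant in $\VV$, which is \emph{not} assumed in this proposition. I would circumvent this with cofibrant replacements. Choose a cofibrant replacement $q\colon QA\xrightarrow{\sim}A$ in $\VV_A$ and cofibrant replacements $QN\xrightarrow{\sim}N$ and $QN'\xrightarrow{\sim}N'$ in ${}_A\VV$, together with the induced map $Qf\colon QN\to QN'$. Since the replacements of $N$ and $N'$ are weak equivalences, two-out-of-three reduces the claim to showing that $Qf$ is a weak equivalence.

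I would then run a short chain of diagram chases, using CHF on both sides. First, pureness applied to the cofibrant module $QA$ shows that $QA\tensor_A f$ is a weak equivalence. Comparing this with $QA\tensor_A Qf$ through the replacements $QN\to N$ and $QN'\to N'$, and using that $QA$, being cofibrant, is homotopy flat (so $QA\tensor_A-$ preserves weak equivalences), one deduces that $QA\tensor_A Qf$ is a weak equivalence. Finally, comparing $QA\tensor_A Qf$ with $A\tensor_A Qf\cong Qf$ through the maps $q\tensor_A QN$ and $q\tensor_A QN'$, and using that $QN$ and $QN'$, being cofibrant left modules, are homotopy flat (so $-\tensor_A QN$ and $-\tensor_A QN'$ carry the weak equivalence $q$ to weak equivalences), two-out-of-three yields that $Qf$, and hence $f$, is a weak equivalence. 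The only points needing care are that CHF is genuinely available on both sides---for cofibrant right modules directly and for cofibrant left modules via the opposite algebra $A^{\mathrm{op}}$---and that each comparison map is an instance of homotopy flatness in the correct variable; granting these, the remaining steps are routine naturality and two-out-of-three arguments.
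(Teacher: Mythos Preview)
Your proposal is correct and follows essentially the same approach as the paper: both use cofibrant replacements $QN\to N$, $QN'\to N'$ in ${}_A\VV$ together with a cofibrant replacement $QA\to A$ in $\VV_A$, then combine pureness (applied to $QA$) with homotopy flatness of $QA$ and of $QN,QN'$ via two-out-of-three. The only difference is cosmetic: the paper first argues that $Qf$ is itself pure and then runs the square argument once, whereas you chain the comparisons in a single pass; the underlying diagram and uses of CHF are the same.
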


\begin{proof}
If all cofibrant right $A$-modules are homotopy flat, then clearly every weak equivalence is pure. Conversely, let $f\colon N\rightarrow N'$ be a pure weak equivalence. We need to show that $f$ is a weak equivalence. We may without loss of generality assume that $N$ and $N'$ are cofibrant. Indeed, by standard model category theory, we can find cofibrant resolutions $q_N\colon QN\to N$ and $q_{N'}\colon QN' \to N'$ and a lift $Qf\colon QN\to QN'$ making the diagram
$$
\xymatrix{QN \ar[r]^-{Qf}_{} \ar[d]^-{q_N}_{\sim} & QN' \ar[d]^-{q_{N'}}_{\sim} \\ N\ar[r]^-f & N'}
$$
commute. Clearly, $f$ is a weak equivalence if and only if $Qf$ is. By tensoring the diagram above from the left with cofibrant (hence homotopy flat) right $A$-modules, one sees that $Qf$ is a pure weak equivalence.

Assume now that $f\colon N\to N'$ is a pure weak equivalence between cofibrant, and hence homotopy flat, $A$-modules. If $q_A\colon QA\to A$ is a cofibrant resolution of $A$ as a right $A$-module, then the commutative diagram
$$
\xymatrix{QA\tensor_A N \ar[r]^-{1\tensor f} \ar[d]^-{q_A\tensor 1} & QA\tensor_A N' \ar[d]^-{q_A\tensor 1} \\ N \ar[r]^-f & N'}
$$
shows that $f$ is a weak equivalence. Indeed, the top horizontal map is a weak equivalence because $f$ is a pure weak equivalence, and the vertical maps are weak equivalences because $N$ and $N'$ are homotopy flat.
\end{proof}

The following result is a slight strengthening of \cite[Theorem 4.3]{schwede-shipley}, based on Example \ref{example:morphism}.

\begin{proposition} \label{prop:resext} Let $\VV$ be a symmetric monoidal model category satisfying Hypothesis \ref{hypo:module}, and let $\varphi\colon A\to B$ be a morphism of algebras in $\VV$ such that $\varphi^{*}: \VV_{B}\to \VV_{A}$ preserves fibrations.  The restriction/\-extension-of-scalars adjunction,
$$\bigadjunction{\VV_A}{\VV_B}{-\tensor_A B}{\varphi^*},$$
is a Quillen equivalence if and only if $\varphi\colon A\rightarrow B$ is a pure weak equivalence of right $A$-modules.
\end{proposition}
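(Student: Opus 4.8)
The plan is to treat this as a Quillen-equivalence statement for an adjunction whose right adjoint is unusually well-behaved, and to reduce the whole equivalence to a single condition on the unit map. First I would check that we genuinely have a Quillen adjunction: by hypothesis $\varphi^*$ preserves fibrations, and by Hypothesis \ref{hypo:module} the weak equivalences in both $\VV_A$ and $\VV_B$ are created in $\VV$. Since $\varphi^*$ is the identity on underlying objects, it therefore both \emph{preserves} and \emph{reflects} all weak equivalences. In particular $\varphi^*$ reflects weak equivalences between fibrant objects, which is exactly the input needed to invoke the standard criterion detecting when a Quillen pair is a Quillen equivalence.

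The key tool is that criterion. Because the right adjoint $\varphi^*$ reflects weak equivalences between fibrant objects, the Quillen pair $(-\tensor_A B,\ \varphi^*)$ is a Quillen equivalence if and only if the derived unit $M \to \varphi^*\bigl(R(M\tensor_A B)\bigr)$ is a weak equivalence for every cofibrant right $A$-module $M$, where $R$ denotes fibrant replacement in $\VV_B$. The next step is to discard the fibrant replacement: since $\varphi^*$ preserves weak equivalences, it carries the acyclic map $M\tensor_A B \to R(M\tensor_A B)$ to a weak equivalence, so the derived unit factors as this weak equivalence precomposed with the underived unit $\eta_M\colon M \to \varphi^*(M\tensor_A B)$. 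By the two-out-of-three axiom, the derived unit is a weak equivalence precisely when $\eta_M$ is.

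It then remains only to identify this unit. Using the isomorphism $M \cong M\tensor_A A$ together with the coequalizer description of the tensor product (exactly as in the proof of Proposition \ref{prop:classification}), the unit $\eta_M$ is the map $1_M\tensor_A \varphi\colon M\tensor_A A \to M\tensor_A B$ induced by $\varphi\colon A\to B$. Hence $\eta_M$ is a weak equivalence for every cofibrant $A$-module $M$ if and only if $\varphi$ is a pure weak equivalence in the sense of Definition \ref{def:pure we}. This establishes both implications simultaneously, so no separate argument for the two directions is required.

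A word on where the real work sits. The identification of the unit with $1_M\tensor_A\varphi$ is routine, and the Quillen-adjunction verification is immediate from the hypotheses. The substantive step, and the one I would be most careful about, is combining the Quillen-equivalence criterion with the reduction from the derived to the underived unit: this is precisely where the special feature of $\varphi^*$ — that it preserves \emph{and} reflects \emph{all} weak equivalences, not merely those between fibrant objects — is essential. It is this feature that lets us bypass any analysis of the counit or of fibrant replacements and land directly on the clean purity condition.
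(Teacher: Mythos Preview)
Your proof is correct and follows essentially the same approach as the paper: both argue that $\varphi^*$ preserves and reflects all weak equivalences, invoke the standard criterion (Hovey's Corollary 1.3.16) to reduce the Quillen-equivalence question to the unit on cofibrants, and identify that unit with $1_M\tensor_A\varphi$. Your version is slightly more explicit about why the fibrant replacement can be dropped, which the paper leaves implicit in its citation.
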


\begin{remark}\label{rmk:htpypure}
If all cofibrant modules are homotopy flat, then pure weak equivalences are the same as weak equivalences by Proposition \ref{prop:pure we alt}, so the ``if'' direction of the above proposition recovers \cite[Theorem 4.3]{schwede-shipley}.
\end{remark}

\begin{proof}[Proof of Proposition \ref{prop:resext}]
As explained in Example \ref{example:morphism}, the restriction/\-extension-of-scalars adjunction is a Quillen adjunction because $\vp^{*}$ preserves fibrations. Since weak equivalences are created in the underlying category $\VV$ (by Hypothesis \ref{hypo:module}),  the restriction-of-scalars functor $\varphi^*$ also preserves and reflects all weak equivalences. Therefore, the adjunction is a Quillen equivalence if and only if the unit
$$\eta_{M}:M\to \varphi^{*} (M\otimes_{A}B)$$
is a weak equivalence for all cofibrant right $A$-modules $M$ \cite[Corollary 1.3.16]{hovey-book}. To conclude, note that the morphism in $\VV$ underlying $\eta_{M}$ may be identified with $1\otimes_{A} \vp: M\otimes _{A}A \to M\otimes_{A}B$.   
\end{proof}

We now turn to the question of when the Quillen adjunction governed by a bimodule ${}_AX_B$ induces a Quillen equivalence between $\VV_A$ and $\VV_B$.

\begin{definition}
A bimodule ${}_AX_B$ is called \emph{right dualizable} if there exists a bimodule ${}_BY_A$ together with morphisms
$$u\colon A\to X\tensor_B Y,\quad e\colon Y\tensor_A X\to B,$$
in ${}_A\VV_A$ and ${}_B\VV_B$, respectively, such that the composites
$$X\xrightarrow{u\tensor 1} X\tensor_B Y \tensor_A X  \xrightarrow{1\tensor e} X,\quad Y\xrightarrow{1\tensor u} Y\tensor_A X\tensor_B Y \xrightarrow{e\tensor 1} Y,$$
are the identity maps on $X$ and $Y$, respectively.
\end{definition}

For a right $B$-module $N$, let
$$\ell_N\colon N\tensor_B \Map_B(X,B) \to \Map_B(X,N)$$
be the map of right $A$-modules that is right adjoint to the map
$$N\tensor_B \Map_B(X,B)\tensor_A X \xrightarrow{1\tensor ev} N\tensor_B B \cong N,$$
induced by the evaluation map $ev\colon \Map_B(X,B)\tensor_A X\to B$.
A bimodule ${}_AX_B$ is right dualizable if and only if the map
$$\ell_N\colon N\tensor_B \Map_B(X,B) \to \Map_B(X,N)$$
is an isomorphism for all right $B$-modules $N$ (see Lemma \ref{lem:char-dual}). In particular, this implies that whether or not ${}_AX_B$ is right dualizable depends only on the right $B$-module structure, not on the left $A$-module structure. Moreover, if ${}_AX_B$ is right dualizable, then every right dual ${}_BY_A$ is isomorphic to $\Map_B(X,B)$ as a $B$-$A$-bimodule.

\begin{example}
It is easy to prove that if $\VV$ is the category of abelian groups, then a bimodule ${}_AX_B$ is right dualizable if and only if it is finitely generated and projective as a right $B$-module.
\end{example}

To formulate the homotopical version of the Morita theorem, we will need a homotopical version of dualizability.

\begin{definition}
Let ${}_AX_B$ be a bimodule that is fibrant and cofibrant as a right $B$-module. We call ${}_AX_B$ \emph{homotopy right dualizable} if the natural map
$$\ell_N\colon N\tensor_B \Map_B(X,B) \to \Map_B(X,N)$$
is a weak equivalence for all fibrant and cofibrant right $B$-modules $N$.
\end{definition}

\begin{remark}
If $\VV = \Ch_R$ is the category of (unbounded) chain complexes of modules over a commutative ring $R$, endowed with the projective model structure \cite[Theorem 2.3.11]{hovey-book}, then ${}_AX_B$ is homotopy right dualizable if and only if $X$ is \emph{compact} as an object of the derived category $\mathcal{D}(B)$ (see, e.g., \cite[Theorem A.1]{neeman}). Recall that compact objects in the derived category of a ring correspond to perfect complexes, i.e., bounded complexes of finitely generated projective modules.
\end{remark}


\begin{proposition} \label{prop:derived tensor}
Let $\VV$ satisfy Hypothesis \ref{hypo:module}. If  $\VV$ also satisfies the CHF hypothesis, then for every bimodule ${}_AX_B$, the functor $-\tensor_A X\colon \VV_A\to \VV_B$ preserves weak equivalences between homotopy flat right $A$-modules. In particular, it preserves weak equivalences between cofibrant right $A$-modules.
\end{proposition}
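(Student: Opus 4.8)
The plan is to reduce the statement to the underlying category $\VV$ and then replace $X$ by a cofibrant model as a \emph{left} $A$-module. Since the forgetful functor $\UU\colon \VV_B\to\VV$ creates weak equivalences (Hypothesis \ref{hypo:module}), a map in $\VV_B$ is a weak equivalence as soon as its underlying map in $\VV$ is one; hence it suffices to prove that, given a weak equivalence $f\colon M\to M'$ between homotopy flat right $A$-modules, the map $f\tensor_A 1\colon M\tensor_A X\to M'\tensor_A X$ is a weak equivalence in $\VV$.

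First I would choose a cofibrant replacement $q\colon QX\to X$ of $X$ in the model category ${}_A\VV$ of left $A$-modules, which exists by Hypothesis \ref{hypo:module}; by the same hypothesis $q$ is a weak equivalence of the underlying objects in $\VV$. Tensoring the square built from $f$ and $q$ yields, in $\VV$,
$$
\xymatrix{
M\tensor_A QX \ar[r]^-{f\tensor 1} \ar[d]_-{1\tensor q} & M'\tensor_A QX \ar[d]^-{1\tensor q} \\
M\tensor_A X \ar[r]^-{f\tensor 1} & M'\tensor_A X,
}
$$
whose commutativity is just the bifunctoriality of $\tensor_A$ (both composites equal $f\tensor_A q$). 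I would then check that three of the four edges are weak equivalences. The two vertical maps are $(M\tensor_A-)$ and $(M'\tensor_A-)$ applied to the weak equivalence $q$ of left $A$-modules; they are weak equivalences precisely because $M$ and $M'$ are homotopy flat as right $A$-modules. The top map is $(-\tensor_A QX)$ applied to the weak equivalence $f$ of right $A$-modules; it is a weak equivalence because $QX$, being cofibrant as a left $A$-module, is homotopy flat as a left $A$-module by the CHF hypothesis. By the two-out-of-three property, the bottom edge $f\tensor_A 1$ is then a weak equivalence in $\VV$, as desired.

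The only genuinely delicate point is the appeal to CHF for the left module $QX$: the CHF hypothesis is phrased for cofibrant \emph{right} modules, whereas here I need that cofibrant \emph{left} $A$-modules are homotopy flat. This is obtained by applying CHF to the opposite algebra $A^{\mathrm{op}}$ together with the symmetry of $\tensor$: left $A$-modules are right $A^{\mathrm{op}}$-modules, the model structures on ${}_A\VV$ and $\VV_{A^{\mathrm{op}}}$ coincide, and the symmetry isomorphism identifies the functor $-\tensor_A QX\colon \VV_A\to\VV$ with the functor $QX\tensor_{A^{\mathrm{op}}}-$ detecting homotopy flatness over $A^{\mathrm{op}}$. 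Finally, the ``in particular'' clause is immediate: a cofibrant right $A$-module is homotopy flat by CHF, so the first assertion applies verbatim to weak equivalences between cofibrant right $A$-modules.
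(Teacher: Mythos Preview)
Your argument is correct and essentially identical to the paper's: both take a cofibrant replacement $q\colon QX\to X$ in ${}_A\VV$, form the same commutative square, observe that the vertical maps are weak equivalences by homotopy flatness of $M$ and $M'$ and the top map by homotopy flatness of the cofibrant $QX$, and conclude by two-out-of-three. Your extra care in justifying CHF for left $A$-modules via $A^{\mathrm{op}}$ is a point the paper leaves implicit.
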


\begin{proof}
Let $f\colon N\rightarrow N'$ be a weak equivalence between homotopy flat right $A$-modules. If $q\colon QX\to X$ is a cofibrant resolution of $X$ as a left $A$-module, then in the diagram
$$
\xymatrix{N\tensor_A QX \ar[r]^-{f\tensor 1} \ar[d]_-{1\tensor q} & N'\tensor_A QX \ar[d]^-{1\tensor q} \\ N\tensor_A X \ar[r]^-{f\tensor 1} & N'\tensor_A X,}
$$
the vertical maps are weak equivalences as $N$ and $N'$ are homotopy flat. The top horizontal map is a weak equivalence because $QX$ is cofibrant, whence homotopy flat. It follows that the bottom horizontal map is a weak equivalence.
\end{proof}

We are now prepared to formulate and prove our homotopical analogue of the classical Morita theorem.

\begin{theorem}[Homotopical Morita theorem] \label{thm:morita}
Let $\VV$ satisfy Hypothesis \ref{hypo:module} and the CHF hypothesis. Let $A$ and $B$ algebras in $\VV$, and let ${}_AX_B$ be a bimodule such that the adjunction 
\begin{equation} \label{eq:quiad2'}
\bigadjunction{\VV_A}{\VV_B}{-\tensor_A X}{\Map_B(X,-)}
\end{equation}
governed by $X$ is a Quillen adjunction. 

If $B$ is fibrant in $\VV_{B}$, and  $X$ is fibrant and cofibrant as a right $B$-module, then the Quillen adjunction (\ref{eq:quiad2'}) 
is a Quillen equivalence if and only if
\begin{enumerate}
\item\label{item:unit} the map $\eta_A\colon A\rightarrow \Map_B(X,X)$ is a weak equivalence; 
\item\label{item:faithful} the bimodule $X$ is homotopy cofaithful as a right $B$-module, i.e., the functor $\Map_B(X,-)$ reflects weak equivalences between fibrant objects; and 
\item\label{item:dualizable} the bimodule $X$ is homotopy right dualizable, i.e., the canonical map
\begin{equation*}
\ell_N\colon N \tensor_B\Map_B(X,B) \to \Map_B(X,N) 
\end{equation*}
is a weak equivalence for all fibrant and cofibrant right $B$-modules $N$.
\end{enumerate}
\end{theorem}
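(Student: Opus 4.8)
The plan is to deduce the theorem from the standard recognition criterion for Quillen equivalences. Write $F=-\tensor_A X$ and $G=\Map_B(X,-)$ for the two adjoint functors and $X^\vee=\Map_B(X,B)$ for the dual module. By \cite[Corollary 1.3.16]{hovey-book}, the Quillen adjunction $F\dashv G$ is a Quillen equivalence if and only if (a) $G$ reflects weak equivalences between fibrant objects, and (b) for every cofibrant $M$ in $\VV_A$ the derived unit $M\to\mathbb{R}G\,\mathbb{L}F(M)$ is a weak equivalence. Since weak equivalences in $\VV_A$ are created in $\VV$ (Hypothesis \ref{hypo:module}), condition (a) is precisely condition~\ref{item:faithful}, that $X$ is homotopy cofaithful as a right $B$-module. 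The task is therefore to match (b) with conditions~\ref{item:unit} and~\ref{item:dualizable}.

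For sufficiency, assume conditions~\ref{item:unit} and~\ref{item:dualizable}. Since $B$ is fibrant and $X$ is fibrant--cofibrant over $B$, condition~\ref{item:dualizable} says that $\ell_N\colon N\tensor_B X^\vee\to\Map_B(X,N)$ is a weak equivalence for all fibrant--cofibrant $N$; combined with Proposition \ref{prop:derived tensor} this yields a natural weak equivalence $\mathbb{R}G\simeq -\tensor_B^{\mathbb{L}}X^\vee$ of functors $\Ho(\VV_B)\to\Ho(\VV_A)$. As a derived tensor functor, $-\tensor_B^{\mathbb{L}}X^\vee$ preserves homotopy colimits, and $\mathbb{L}F$ does so automatically, so $\mathbb{R}G\,\mathbb{L}F$ preserves homotopy colimits; the derived unit is then a natural transformation between two homotopy-colimit-preserving endofunctors of $\Ho(\VV_A)$. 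I would verify that it is a weak equivalence on the free generators $K\tensor A$, where $K$ ranges over the (co)domains of the generating cofibrations of $\VV$ and is therefore cofibrant: by the enriched naturality of the unit together with the identification $\mathbb{R}G\simeq-\tensor_B^{\mathbb{L}}X^\vee$, the derived unit at $K\tensor A$ agrees, up to the assembly equivalence provided by~\ref{item:dualizable}, with $K\tensor\eta_A$, which is a weak equivalence by condition~\ref{item:unit} and Axiom \ref{axiom:sm7}. A cellular induction---every cofibrant $A$-module is a retract of a cell complex built from such generators by homotopy colimits---then extends this to all cofibrant $M$, giving (b).

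For necessity, condition~\ref{item:faithful} (equivalently (a)) holds because the right adjoint of any Quillen equivalence reflects weak equivalences between fibrant objects. For condition~\ref{item:unit}, I would evaluate (b) at a cofibrant replacement $QA\xrightarrow{\sim}A$: as $A$ is homotopy flat over itself ($-\tensor_A A\cong\mathrm{id}$), Proposition \ref{prop:derived tensor} gives $QA\tensor_A X\xrightarrow{\sim}X$, which is already fibrant, so naturality of the unit identifies the derived unit at $QA$ with $\eta_A\colon A\to\Map_B(X,X)$, forcing~\ref{item:unit}. For condition~\ref{item:dualizable}, I would use that $\mathbb{R}G$, being one half of an equivalence of homotopy categories, preserves homotopy colimits. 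Then $\ell$ is a natural transformation $-\tensor_B^{\mathbb{L}}X^\vee\to\mathbb{R}G$ between homotopy-colimit-preserving functors which is an isomorphism at $N=B$, where $\ell_B$ is the canonical isomorphism $B\tensor_B X^\vee\cong X^\vee\cong\Map_B(X,B)$; a generation argument then forces $\ell_N$ to be a weak equivalence for every fibrant--cofibrant $N$, which is condition~\ref{item:dualizable}.

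The main obstacle, in both directions, is the interplay between homotopy colimits and free modules. One must know that the cofibrant objects of $\VV_A$ (respectively all objects of $\Ho(\VV_B)$) are generated under homotopy colimits and retracts by the free modules $K\tensor A$ (respectively $K\tensor B$), and---more delicately in the non-stable setting---that the comparison maps are well behaved on these generators. Concretely, the crux is that the assembly maps $K\tensor\Map_B(X,-)\to\Map_B(X,K\tensor-)$ are weak equivalences, which is exactly where the homotopy dualizability of $X$ (condition~\ref{item:dualizable}) is used. Controlling this, rather than the formal homotopy-colimit bookkeeping or the routine compatibility between the adjunction unit and counit and the dualizability maps $\eta_A$ and $\ell$, is the real work.
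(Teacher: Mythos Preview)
Your overall strategy---reduce to \cite[Corollary 1.3.16]{hovey-book} and then exploit homotopy dualizability to identify $\mathbb{R}G$ with a derived tensor---is reasonable, but both directions rest on a step the hypotheses do not support: the \emph{cellular induction} in the sufficiency argument and the \emph{generation argument} in the necessity argument. Hypothesis~\ref{hypo:module} only asks that $\VV_A$ carry a model structure whose weak equivalences are created in $\VV$; it does \emph{not} assume cofibrant generation, so you cannot conclude that cofibrant $A$-modules are retracts of cell complexes built from objects $K\tensor A$, nor that $\Ho(\VV_B)$ is generated under homotopy colimits by $B$. The theorem is meant to cover, for instance, Hurewicz-type model structures where such cellular descriptions are unavailable.

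The paper's proof avoids this entirely by using the CHF hypothesis \emph{pointwise}, for an arbitrary cofibrant $M$, rather than reducing to generators. Since cofibrant implies homotopy flat, the functor $M\tensor_A-$ preserves weak equivalences; in particular $M\tensor_A\eta_A$ and $M\tensor_A\ell_X$ are weak equivalences once $\eta_A$ and $\ell_X$ are. One commutative diagram then exhibits the homotopy unit $\widetilde\eta_M$ as a composite of these with $\ell_{(M\tensor_A X)^f}$ and a fibrant-replacement map, all of which are weak equivalences by~\ref{item:unit} and~\ref{item:dualizable}. For necessity of~\ref{item:dualizable}, instead of a generation argument the paper uses the derived counit: for fibrant $N$ one extracts from it a weak equivalence $f_N\colon \Map_B(X,N)\tensor_A{}^cX\to N$ (where ${}^cX\to X$ is a cofibrant replacement in ${}_A\VV$), compares $N\tensor_B f_B$ with $f_N$ to deduce that $\ell_N\tensor_A{}^cX$ is a weak equivalence, and then invokes that the left Quillen functor of a Quillen equivalence \emph{reflects} weak equivalences between cofibrants to conclude that $\ell_N$ itself is one. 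No appeal to generators of $\Ho(\VV_B)$ is needed.
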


\begin{remark}
The fibrancy hypotheses on $B$ and $X$ are not essential and indeed often trivially fulfilled. They may be removed at the expense of taking derived mapping spaces in the hypotheses.
\end{remark}

\begin{remark}
The theorem above recovers the classical Morita theorem. If $A$ and $B$ are ordinary associative unital rings, then a bimodule ${}_AX_B$ induces an equivalence between $\VV_A$ and $\VV_B$ if and only if $A\cong \Hom_B(X,X)$ and the right $B$-module $X$ is a finitely generated projective generator. A right $B$-module $X$ is finitely generated and projective if and only if it is right dualizable, and it is a generator if and only if the functor $\Hom_B(X,-)$ reflects isomorphisms.  

As a consequence of Proposition \ref{prop:classification}, Theorem \ref{thm:morita} also implies the following broad generalization of \cite[Theorem 1.5]{dugger-shipley}.
\end{remark}

\begin{corollary} Let $\VV$ satisfy Hypothesis \ref{hypo:module} and the CHF hypothesis, and let 
\begin{equation}\label{eqn:FG-adj}
\adjunction{\VV_{A}}{\VV_{B}}FG
\end{equation}
be a $\VV$-adjunction and Quillen adjunction, where $A$ and $B$ are algebras in $\VV$.

If $B$ is fibrant in $\VV_{B}$, and $F(A)$ is fibrant and cofibrant in $\VV_{B}$, then  (\ref{eqn:FG-adj}) is a Quillen equivalence if and only if
\begin{enumerate}
\item the map $\eta_A\colon A\rightarrow \Map_B\big( F(A), F(A)\big)$ is a weak equivalence; 
\item  the functor $\Map_B\big(F(A),-\big)$ reflects weak equivalences between fibrant objects; and 
\item the bimodule $F(A)$ is homotopy right dualizable, i.e., the canonical map
\begin{equation*}
\ell_N\colon N \tensor_B\Map_B\big(F(A),B\big) \to \Map_B\big(F(A),N\big) 
\end{equation*}
is a weak equivalence for all fibrant and cofibrant right $B$-modules $N$.
\end{enumerate}
\end{corollary}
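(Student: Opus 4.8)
The plan is to deduce this directly from the Homotopical Morita theorem (Theorem \ref{thm:morita}) by first replacing the abstract adjunction $F\dashv G$ with the concrete one governed by a bimodule. First I would apply Proposition \ref{prop:classification}: since $\VV$ admits reflexive coequalizers (part of Hypothesis \ref{hypo:module}) and $F\dashv G$ is a $\VV$-adjunction between $\VV_A$ and $\VV_B$, it is governed by the $A$-$B$-bimodule $X := F(A)$, equipped with the left $A$-action constructed in the proof of that proposition. Concretely, this gives a natural isomorphism of $\VV$-functors $F\cong -\tensor_A X$.

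Next I would identify the right adjoint. The functor $-\tensor_A X$ has right adjoint $\Map_B(X,-)$ by the canonical enriched adjunction recalled just before Proposition \ref{prop:classification}, and since right adjoints are unique up to natural isomorphism, $G\cong \Map_B(X,-)=\Map_B\big(F(A),-\big)$. As being a Quillen adjunction and being a Quillen equivalence are both properties invariant under natural isomorphism of the two functors, the adjunction $F\dashv G$ is a Quillen equivalence if and only if the adjunction $(-\tensor_A X)\dashv \Map_B(X,-)$ is; and the latter is a Quillen adjunction by hypothesis.

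It then remains to apply Theorem \ref{thm:morita} to the bimodule $X=F(A)$. The standing hypotheses all match: $\VV$ satisfies Hypothesis \ref{hypo:module} and the CHF hypothesis, $B$ is fibrant in $\VV_B$, and $X=F(A)$ is fibrant and cofibrant as a right $B$-module. Substituting $X=F(A)$ into the three conditions of Theorem \ref{thm:morita}---that $\eta_A\colon A\to \Map_B(X,X)$ be a weak equivalence, that $X$ be homotopy cofaithful as a right $B$-module, and that $X$ be homotopy right dualizable---yields verbatim the three conditions of the corollary, and the conclusion follows.

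The statement is essentially bookkeeping, so there is no serious obstacle; the one point requiring a little care is checking that the natural isomorphism $F\cong -\tensor_A X$ of \emph{$\VV$-functors} is compatible with the adjunctions, so that the induced identification of right adjoints $G\cong \Map_B(X,-)$ genuinely transports the property of being a Quillen equivalence. This is harmless, since a natural isomorphism of left (respectively right) adjoints induces a unique compatible natural isomorphism of the right (respectively left) adjoints, and both notions in play depend only on the functors up to such isomorphism.
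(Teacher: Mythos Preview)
Your proposal is correct and follows exactly the approach the paper intends: the corollary is stated immediately after the remark that it is a consequence of Proposition \ref{prop:classification} and Theorem \ref{thm:morita}, and your argument spells out precisely this deduction. The only cosmetic point is that reflexive coequalizers exist in $\VV$ not because Hypothesis \ref{hypo:module} says so directly, but because $\VV$ is assumed to be a model category and hence cocomplete.
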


\begin{proof}[Proof of Theorem \ref{thm:morita}]
Suppose first that conditions \eqref{item:unit}, \eqref{item:faithful} and \eqref{item:dualizable} are fulfilled. Since the right adjoint in \eqref{eq:quiad2'} reflects weak equivalences between fibrant objects, we need only to show that the homotopy unit $\widetilde{\eta}_M$ is a weak equivalence for all cofibrant objects $M$ in $\VV_A$ \cite[Corollary 1.3.16]{hovey-book}. Let $r\colon M\tensor_A X\to (M\tensor_A X)^f$ be a fibrant replacement in $\VV_B$, with $r$ a trivial cofibration, and consider the following commutative diagram in $\VV_B$.
$$
\xymatrix{
M\tensor_A A \ar[d]_-\cong \ar[r]_-{(a)}^-{M\tensor_A \eta_A} & M\tensor_A \Map_B(X,X) \ar[d]^-{\ell_M} \\
M \ar[r]^-{\eta_M} \ar[dr]_-{\widetilde{\eta}_M} & \Map_B(X,M\tensor_A X) \ar[d]^-{r_*} && M\tensor_A X\tensor_B \Map_B(X,B) \ar[ull]_-{M\tensor_A \ell_X}^-{(b)} \ar[ll]^-{\ell_{M\tensor_A X}} \ar[d]_-{(c)}^-{r\tensor 1} \\
& \Map_B(X,(M\tensor_A X)^f) && (M\tensor_A X)^f\tensor_B \Map_B(X,B) \ar[ll]_-{\ell_{(M\tensor_A X)^f}}^-{(d)}}
$$
The maps labeled $(a),(b),(c),(d)$ are weak equivalences, for the following reasons.
\begin{itemize}
\item[$(a)$] By our hypothesis \eqref{item:unit}, the map $\eta_A$ is a weak equivalence. Since the right $A$-module $M$ is assumed to be cofibrant, it is also homotopy flat.
\item[$(b)$] Since $X$ is fibrant and cofibrant in $\VV_B$, the map $\ell_X$ is a weak equivalence by \eqref{item:dualizable}. As we pointed out above, $M_A$ is homotopy flat.
\item[$(c)$] Since $M$ is cofibrant, and $-\tensor_A X$ is a left Quillen functor by hypothesis, $M\tensor_A X$ is cofibrant in $\VV_B$. Since $r$ is a weak equivalence and a cofibration with cofibrant source, it is in particular a weak equivalence between two cofibrant objects. Since we assume that all cofibrant modules are homotopy flat, it follows from Proposition \ref{prop:derived tensor} that $r\tensor 1$ is a weak equivalence.
\item[$(d)$] The right $B$-module $(M\tensor_A X)^f$ is fibrant and cofibrant, so the map $\ell_{(M\tensor_A X)^f}$ is a weak equivalence by \eqref{item:dualizable}.
\end{itemize}
It follows that $\widetilde{\eta}_M$ is a weak equivalence.

Conversely, suppose that \eqref{eq:quiad2'} is a Quillen equivalence. Then clearly the right adjoint $\Map_B(X,-)$ reflects weak equivalences between fibrant objects, i.e., \eqref{item:faithful} holds.

Moreover, even though $A$ is not necessarily cofibrant as a right $A$-module, the map $\eta_A$ represents the homotopy unit for $A\in\Ho \VV_A$, because $A$ is homotopy flat, and $X$ is fibrant. Indeed, if $q\colon QA\to A$ is a cofibrant replacement of $A$ in $\VV_A$, then $q$ is a weak equivalence between homotopy flat objects, so $q\tensor_A X\colon QA\tensor_A X\to X$ is a weak equivalence by Proposition \ref{prop:derived tensor}. It follows that we may take $X$ as a fibrant replacement of $QA\tensor_A X$, whence the diagonal map $\widetilde{\eta}_{QA}$ in the commutative diagram
$$
\xymatrix{
QA\ar[d]_-\sim^-q \ar[r]^-{\eta_{QA}} \ar[dr]^-{\widetilde{\eta}_{QA}} & \Map_B(X,QA\tensor_A X) \ar[d]^-{(q\tensor_A X)_*} \\
A\ar[r]^-{\eta_A} & \Map_B(X,X).
}
$$
is a weak equivalence because it represents the homotopy unit for $QA$. Condition \eqref{item:unit} follows immediately.

Finally, we check condition \eqref{item:dualizable}. Let $N$ be a fibrant right $B$-module, 
$$p_N\colon \Map_B(X,N)^c\rightarrow \Map_B(X,N)$$ a cofibrant replacement in $\VV_A$, and $$q\colon {}^cX\rightarrow X$$ a cofibrant replacement in ${}_A\VV$. Consider the following commutative diagram in $\VV$, where the maps labeled by $\sim$ are weak equivalences because cofibrant left or right $A$-modules are homotopy flat.
$$
\xymatrix{
& \Map_B(X,N)^c \tensor_A X \ar[d]_-{p_N\tensor 1} \ar[dr]^-{\widetilde{\epsilon}_N} \\
\Map_B(X,N)^c \tensor_A {}^cX \ar[ur]_-\sim^-{1\tensor q} \ar[dr]^-\sim_-{p_N\tensor 1}
& \Map_B(X,N) \tensor_A X \ar[r]^-{\epsilon_N}
& N \\
& \Map_B(X,N) \tensor_A {}^cX \ar[u]^-{1\tensor q} \ar[ur]_-{f_N}}
$$
The map $\widetilde{\epsilon}_N$ is a map of right $B$-modules and represents the homotopy counit for $N$, so it is a weak equivalence because \eqref{eq:quiad2'} is a Quillen equivalence. It follows that the map $f_N$ is a weak equivalence for every fibrant right $B$-module $N$. In particular, since $B$ is fibrant, the map $f_B$ is a weak equivalence. Note moreover that $f_B$ is a map of left $B$-modules.

Next, let $N$ be a right $B$-module that is both fibrant and cofibrant, and consider the following commutative diagram in $\VV$.
$$
\xymatrix{
N\tensor_B \Map_B(X,B) \tensor_A {}^cX \ar[rr]^-{1\tensor 1\tensor q} \ar[d]_-{\ell_N\tensor_A {}^cX}
&& N\tensor_B \Map_B(X,B)\tensor_A X \ar[r]^-{1\tensor \epsilon_B} \ar[d]_-{\ell_N\tensor_A X}
& N\tensor_B B \ar[d]^-\cong
\\ \Map_B(X,N)\tensor_A {}^cX \ar[rr]^-{1\tensor q}
&& \Map_B(X,N)\tensor_A X \ar[r]^-{\epsilon_N}
& N.
}
$$
The composite of the top horizontal maps is equal to $N\tensor_B f_B$. As we just noted, $f_B$ is a weak equivalence. Since $N_B$ is cofibrant, hence homotopy flat, $N\tensor_B f_B$ is a weak equivalence. On the other hand, the composite of the bottom horizontal maps is equal to $f_N$, which is a weak equivalence by the above, since $N$ is fibrant. We deduce that the left vertical map $\ell_N\tensor_A {}^cX$ is a weak equivalence. Since \eqref{eq:quiad2'} is a Quillen equivalence, the left Quillen functor $-\tensor_A X$ reflects weak equivalences between cofibrant right $A$-modules. It follows that $-\tensor_A{}^cX$ reflects weak equivalences between any right $A$-modules. (Indeed, if $g\colon M\to M'$ is a map in $\VV_A$, then one can take a cofibrant replacement $Qg\colon QM\to QM'$ in $\VV_A$ and argue using the commutative diagram
$$
\xymatrix{
M\tensor_A {}^cX \ar[d] & \ar[l]_-\sim QM\tensor_A {}^cX \ar[d] \ar[r]^\sim & QM\tensor_A X \ar[d] \\
M'\tensor_A {}^cX   & \ar[l]_\sim QM'\tensor_A {}^cX \ar[r]^\sim  & QM'\tensor_A X,
}
$$
observing that $Qg$ is a weak equivalence if and only if $g$ is.)
Thus, $\ell_N$ is a weak equivalence.\end{proof}

\section{Corings and braided bimodules} \label{sec:corings}

Our goal in this section is to study and classify adjunctions between categories of comodules over corings in symmetric monoidal categories admitting appropriate limits and colimits.
We begin by recalling the elementary theory of corings and their comodules, then introduce the notion of \emph{braided bimodules} and show that every adjunction between categories of comodules over corings, relative to a fixed adjunction between the underlying module categories, is governed by a braided bimodule.

Throughout this section, $(\VV,\tensor,\kk)$ denotes a symmetric monoidal category that admits all reflexive coequalizers and coreflexive equalizers.

\subsection{Corings and their comodules}
If $A$ is an algebra in $\VV$, then the tensor product $-\tensor_A -$ endows the category of $A$-bimodules ${}_A\VV_A$ with a (not necessarily symmetric) monoidal structure. The unit is $A$, viewed as an $A$-bimodule over itself.
\begin{definition}
An \emph{$A$-coring} is a coalgebra in the monoidal category $({}_A\VV_A,\tensor_A,A)$, i.e., an $A$-bimodule $C$ together with maps of $A$-bimodules $\Delta\colon C \rightarrow C\tensor_A C$ and $\epsilon\colon C\rightarrow A$, such that the diagrams
$$
\xymatrix{C \ar[r]^-{\Delta} \ar[d]_-{\Delta} & C\tensor_A C \ar[d]^-{1\tensor \Delta} \\ C\tensor_A C \ar[r]^-{\Delta\tensor 1} & C\tensor_A C \tensor_A C} \quad \quad \xymatrix{C\ar[r]^-{\Delta} \ar[d]_-{\Delta} \ar@{=}[dr] & C\tensor_A C \ar[d]^-{1\tensor \epsilon} \\ C\tensor_A C\ar[r]_-{\epsilon\tensor 1} & C}
$$
are commutative. Here, we tacitly make the identifications $A\tensor_A C = C = C\tensor_A A$ in the lower right corner. A \emph{morphism of $A$-corings} is a map of $A$-bimodules $f\colon C\rightarrow D$ such that the diagrams
$$\xymatrix{C\ar[r]^-{\Delta_C} \ar[d]_-f & C\tensor_A C\ar[d]^-{f\tensor_A f} \\ D \ar[r]^-{\Delta_D} & D\tensor_A D}\quad \quad \quad  \xymatrix{C\ar[r]^-{\epsilon_C} \ar[d]_-f & A \ar@{=}[d] \\ D \ar[r]^-{\epsilon_D} & A}$$
commute.
\end{definition}

We need to allow morphisms between corings to change the algebra as well. To this end, note that if $\varphi\colon A\rightarrow B$ is a morphism of algebras, then there is an extension/restriction-of-scalars adjunction,
$$\bigadjunction{{}_A\VV_A}{{}_B\VV_B}{\varphi_*}{\varphi^*},\quad \varphi_* \dashv \varphi^*,$$
where $\varphi_*(M) = B\tensor_A M \tensor_A B$. Moreover, $\varphi_{*}$ is an op-monoidal functor, i.e., there is a natural transformation
$$\varphi_*(M\tensor_A N) \rightarrow \varphi_*(M)\tensor_B \varphi_*(N),$$
and a morphism $\varphi_*(A) \to B$, which allow us to endow $\varphi_*(C)$ with the structure of a $B$-coring whenever $C$ is an $A$-coring.

\begin{definition} \label{def:coring}
A \emph{coring} in $\VV$ is a pair $(A,C)$ where $A$ is an algebra in $\VV$ and $C$ is an $A$-coring. A \emph{morphism of corings} $(A,C)\rightarrow (B,D)$ is a pair $(\varphi,f)$ where $\varphi\colon A\rightarrow B$ is a morphism of algebras and $f\colon \vp_*(C)\rightarrow D$ is a morphism of $B$-corings. The category of corings in $\VV$ is denoted $\Coring_{\VV}$.
\end{definition}

\begin{remark}
There is no natural $A$-coring structure on $\varphi^*(D)$ in general, but if we let $f^\sharp\colon C\rightarrow \vp^*(D)$ denote the adjoint of $f\colon \vp_*(C)\to D$, then the condition that $f$ is a morphism of $B$-corings is equivalent to saying that the diagrams of $A$-bimodules
\begin{equation} \label{eq:flat}
\xymatrix{C\ar[r]^-{\Delta_C} \ar[d]_-{f^\sharp} & C\tensor_A C\ar[d]^-{f^\sharp\tensor_\varphi f^\sharp} \\ \varphi^*(D) \ar[r]^-{\Delta_D} & \varphi^*(D\tensor_B D)}\quad \quad \quad  \xymatrix{C\ar[r]^-{\epsilon_C} \ar[d]_-{f^\sharp} & A \ar[d]^-\varphi \\ \varphi^*(D) \ar[r]^-{\epsilon_D} & \varphi^*(B)}
\end{equation}
are commutative.
\end{remark}

\begin{definition} Let $(A,C)$ be a coring in $\VV$. 
A right \emph{$(A,C)$-comodule} is a right $A$-module $M$ together with a morphism of right $A$-modules
$\delta\colon M\rightarrow M\tensor_A C$ such that the diagrams
$$
\xymatrix{M \ar[r]^-{\delta} \ar[d]_-{\delta} & M\tensor_A C \ar[d]^-{1\tensor \Delta} \\ M\tensor_A C \ar[r]^-{\delta\tensor 1} & M\tensor_A C \tensor_A C} \quad \quad \xymatrix{M\ar[r]^-{\delta} \ar@{=}[dr] & M\tensor_A C \ar[d]^-{1\tensor \epsilon} \\ & M}
$$
are commutative. A \emph{morphism of $(A,C)$-comodules} is a morphism $f\colon M\rightarrow N$ of right $A$-modules such that the diagram
$$
\xymatrix{M \ar[r]^-{\delta_M} \ar[d]_-f & M\tensor_A C \ar[d]^-{f\tensor 1} \\ N \ar[r]^-{\delta_N} & N\tensor_A C}
$$
commutes.
\end{definition}

We let $\VV_A^C$ denote the category of right $(A,C)$-comodules. There is an adjunction
$$\bigadjunction{\VV_A^C}{\VV_A}{\UU_A}{-\tensor_A C},\quad \UU_A \dashv -\tensor_A C,$$
where $\UU_A$ is the forgetful functor.  The category ${}_{A}^{C}\VV$ of left $(A,C)$-comodules is defined analogously.

We now give some examples of corings.

\begin{example}[Trivial coring]
For every algebra $A$, we can form the \emph{trivial coring} $(A,A)$. The comultiplication $A\to A\tensor_A A$ is the natural isomorphism, and the counit $A\to A$ is the identity map. The forgetful functor $\UU_A\colon \VV_A^A\to \VV_A$ is an isomorphism of categories.
\end{example}

\begin{example}[Coalgebras]
For every coalgebra $C$ in $\VV$, there is a coring $(\kk,C)$. The category $\VV_\kk^C$ is isomorphic to the category of comodules over $C$.
\end{example}

The examples above show that the study of comodules over corings englobes the study of modules over algebras and comodules over coalgebras.

\begin{example}[Descent coring] \label{ex:desc-coring}
Let $\varphi\colon A\to B$ be a morphism of algebras in $\VV$. The \emph{descent coring} associated to $\vp$ is the coring $(B,B\tensor_A B)$, where the comultiplication is the composite
$$B\tensor_A B \cong B\tensor_A A \tensor_A B \xrightarrow{1\tensor \varphi \tensor 1} B\tensor_A B\tensor_A B \cong \big( B\tensor_A B \big) \tensor_B \big( B\tensor_A B \big),$$
and the counit $B\tensor_A B\to B$ is induced by the multiplication in $B$.

There is a morphism of corings $(\varphi,f)\colon (A,A)\to (B,B\tensor_A B)$, where $f$ is the identity map on $B\tensor_A B$.
\end{example}

Further important examples of corings in arise in the theory of Hopf-Galois extensions \cite[\S34]{brzezinski-wisbauer}. Hopf-Galois extensions for structured ring spectra were introduced by Rognes \cite{rognes}, and his framework was later generalized by Hess \cite{hess:hhg}. Another source of corings is provided by Hopf algebroids (see \cite[\S31.6]{brzezinski-wisbauer} or \cite[Appendix 1]{ravenel}). Every Hopf algebroid has an underlying coring, obtained by forgetting the left and right units and the antipode. Comodules over the Hopf algebroid (see e.g.~\cite[Definition A1.1.2]{ravenel}) are the same thing as comodules over the underlying coring.

\begin{proposition} \label{prop:comodule v-category}
Let $\VV$ be a closed, symmetric monoidal category that admits all reflexive coequalizers and coreflexive equalizers. The category $\VV_A^C$ of $(A,C)$-comodules is a $\VV$-category if it admits all coreflexive equalizers.
\end{proposition}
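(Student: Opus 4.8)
The plan is to realize $\VV_A^C$ as the $\VV$-category of coalgebras for the $\VV$-comonad $G=-\tensor_A C$ on the $\VV$-category $\VV_A$ of Proposition \ref{prop:Amod-Vstruct}, and to construct its enrichment, tensors, and cotensors by hand. For comodules $(M,\delta_M)$ and $(N,\delta_N)$, I would define the hom-object $\Map_A^C(M,N)$ as the equalizer in $\VV$
$$\equalizer{\Map_A^C(M,N)}{\Map_A(M,N)}{\Map_A(M,N\tensor_A C)}{}{}$$
where the top map is $(\delta_N)_*$, post-composition with $\delta_N$, and the bottom map precomposes with $\delta_M$ the action $\Map_A(M,N)\to\Map_A(M\tensor_A C,N\tensor_A C)$ of the $\VV$-functor $-\tensor_A C$. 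This parallel pair is \emph{coreflexive}: the map $(1_N\tensor\epsilon)_*$ induced by the counit $\epsilon\colon C\to A$ is a common retraction, as one checks directly from the counit axioms for $M$ and $N$. Hence the equalizer exists because $\VV$ admits coreflexive equalizers. Applying $\VV(\kk,-)$, which preserves equalizers, recovers exactly the comodule morphisms, so the underlying category is $\VV_A^C$. Composition and identities are inherited from $\VV_A$ via the universal property of the equalizer; checking that a composite of comodule maps again equalizes, and that the enriched associativity and unit axioms hold, is routine and reduces to the corresponding facts in $\VV_A$.

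For the tensor, given $K\in\VV$ and a comodule $M$, I would equip the $A$-module $K\tensor M$ with the coaction
$$K\tensor M \xrightarrow{1\tensor\delta_M} K\tensor(M\tensor_A C)\xrightarrow{\cong}(K\tensor M)\tensor_A C,$$
the isomorphism being valid because $K\tensor-$ is a left adjoint and so preserves the reflexive coequalizer defining $-\tensor_A C$. One verifies the comodule axioms and the corepresenting isomorphism $\Map_A^C(K\tensor M,-)\cong\Map(K,\Map_A^C(M,-))$, so that $\UU_A$ creates tensors; no additional hypothesis is needed here.

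The cotensor is the delicate point, and it is precisely where the hypothesis that $\VV_A^C$ admits coreflexive equalizers is used. Since $-\tensor_A C$ is right adjoint to the forgetful $\VV$-functor $\UU_A$, this adjunction is $\VV$-enriched, and for $P\in\VV_A$ and $K\in\VV$ the chain of $\VV$-natural isomorphisms
$$\Map_A^C(N,P^K\tensor_A C)\cong\Map_A(N,P^K)\cong\Map(K,\Map_A(N,P))\cong\Map(K,\Map_A^C(N,P\tensor_A C))$$
exhibits $P^K\tensor_A C$ as the cotensor $(P\tensor_A C)^K$ of the cofree comodule $P\tensor_A C$. An arbitrary comodule $M$ is the coreflexive equalizer in $\VV_A^C$ of its cofree resolution
$$\equalizer{M}{M\tensor_A C}{M\tensor_A C\tensor_A C}{}{}$$
with maps $\delta_M\tensor 1$ and $1\tensor\Delta$ and common retraction $1\tensor\epsilon\tensor 1$. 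I would then define $M^K$ to be the coreflexive equalizer of the cofree cotensors $(M\tensor_A C)^K$ and $(M\tensor_A C\tensor_A C)^K$ obtained by applying $(-)^K$, which exists by hypothesis.

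The main obstacle is this last step: verifying that the equalizer presentation really produces the cotensor, i.e.\ that the universal property $\Map_A^C(N,M^K)\cong\Map(K,\Map_A^C(N,M))$ holds for a general comodule and not only for cofree ones. This comes down to the interchange of the weighted limit $(-)^K$ with the coreflexive equalizer defining $M$ (legitimate because $(-)^K$ is a right adjoint and preserves equalizers) together with the $\VV$-naturality of the cofree isomorphism. Once this is in place, the remaining compatibility axioms relating the tensor, cotensor and enrichment follow formally from the corresponding compatibilities in $\VV_A$ and the universal properties of equalizers.
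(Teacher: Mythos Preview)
Your proposal is correct and arrives at the same explicit constructions as the paper. The only difference is organizational: where you build the cotensor and the enrichment by hand from the cofree resolution and verify the universal properties directly, the paper invokes the dual Adjoint Lifting Theorem \cite[\S4.5]{borceux} as a black box---applied to the comonadic forgetful functor $\UU_A$ and the adjunctions $K\tensor - \dashv (-)^K$ and $-\tensor M \dashv \Map_A(M,-)$ on $\VV_A$---to obtain the lifted right adjoints, and then records the resulting equalizer formulas, which coincide with yours. Your direct argument is essentially an unpacking of that theorem in this special case; it has the virtue of being self-contained, while the paper's citation makes the structural reason (comonadicity plus coreflexive equalizers) more transparent.
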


\begin{remark}\label{rmk:coref-eq}
If $\VV$ is locally presentable, then $\VV_{A}$ is locally presentable, as it is the category of algebras for the monad on $\VV$ with underlying functor $-\otimes A$ \cite [2.78]{adamek-rosicky}. This implies in turn that $\VV_A^C$ is locally presentable, and therefore complete, since it is the category of coalgebras for the comonad on $\VV_{A}$ with underlying functor $-\otimes_{A}C$  \cite [Proposition A.1]{ching-riehl}.  In particular, if $\VV$ is locally presentable, then $\VV_A^C$ admits all coreflexive equalizers.

On the other hand, by the dual of \cite[Corollary 3]{linton}, if $-\otimes_{A}C: \VV_{A}\to \VV_{A}$ preserves coreflexive equalizers, then $\VV_{A}^{C}$ admits all coreflexive equalizers.
\end{remark}

\begin{proof}
For $K\in \VV$ and $M\in \VV_A^C$, the tensor product $K\tensor M\in \VV_A^C$ is defined as the tensor product of the underlying objects in $\VV$, together with the evident right $A$-module and $C$-comodule structures. That $\VV_A^C$ is cotensored over $\VV$ is ensured by the (dual) Adjoint Lifting Theorem \cite[\S4.5]{borceux}, which we can apply to the diagram
\begin{equation*}
\xymatrix{
\VV_A^C \ar@<1ex>[rr]^-{K\tensor -} \ar@<-1ex>[dd]_-{\UU_A} && \VV_A^C \ar@<1ex>@{-->}[ll]^-{(-)^K} \ar@<1ex>[dd]^-{\UU_A} \\ \\
\VV_A \ar@<-1ex>[rr]_{K\tensor -} && \VV_A \ar@<-1ex>[ll]_-{(-)^K}}
\end{equation*}
because $\UU_{A}$ and $\UU_{B}$ are comonadic, and $\VV_A^C$ admits coreflexive equalizers by hypothesis. Explicitly, the cotensor product $M^K$ can be defined as the equalizer of the following diagram in $\VV_A^C$:
$$\Map_\VV(K,M)\tensor_A C \rightrightarrows \Map_\VV(K,M\tensor_A C) \tensor_A C.$$
The top map is induced by $\delta_{M}: M \to M\otimes_{A}C$, and the bottom map is given by
{\small $$\xymatrix{\Map_\VV(K,M)\tensor_A C \ar[r]^-{1\tensor \Delta} & \Map_\VV(K,M)\tensor_A C\tensor_A C \ar[r]^-{\nu\tensor 1} & \Map_\VV(K,M\tensor_A C)\tensor_A C,}$$}
where $\nu\colon \Map_\VV(K,M)\tensor_A C\rightarrow \Map_\VV(K,M\tensor_A C)$ is adjoint to the map
$$\xymatrix{K\tensor \Map_\VV(K,M)\tensor_A C \ar[r]^-{ev\tensor 1} & M\tensor_A C.}$$

Similarly, existence of the $\VV$-enrichment $\Map_A^C(M,N) \in \VV$ is ensured by applying the (dual) Adjoint Lifting Theorem to the following diagram.
\begin{equation*}
\xymatrix{
\VV \ar@<1ex>[rr]^-{-\tensor M} \ar@<-1ex>@{=}[dd] && \VV_A^C \ar@<1ex>@{-->}[ll]^-{\Map_A^C(M,-)} \ar@<1ex>[dd]^-{\UU_A} \\ \\
\VV \ar@<-1ex>[rr]_{-\tensor M} && \VV_A \ar@<-1ex>[ll]_-{\Map_A(M,-)}}
\end{equation*}
Explicitly, $\Map_A^C(M,N)$ is the equalizer of the following diagram in $\VV$:
$$\Map_A(M,N) \rightrightarrows \Map_A(M,N\tensor_A C).$$
The top map is $(\delta_N)_* \colon \Map_A(M,N)\rightarrow \Map_A(M,N\tensor_A C)$, and the bottom map is the composite
$$\xymatrix{\Map_A(M,N) \ar[r]^-{-\tensor_A C} & \Map_A(M\tensor_A C,N\tensor_A C) \ar[r]^-{\delta_M^*} & \Map_A(M,N\tensor_A C).}$$
\end{proof}

\begin{remark} The forgetful functor $\UU_A$ is clearly a tensor functor, so the adjunction
$$\bigadjunction{\VV_A^C}{\VV_A}{\UU_A}{-\tensor_A C},\quad \UU_A \dashv -\tensor_A C,$$
is a $\VV$-adjunction with respect to the structures defined in the proof above, by Proposition \ref{prop:V-adjunction}.
\end{remark}

\subsection{Braided bimodules} \label{subsec:braided bimodules}
By Proposition \ref{prop:classification}, every $\VV$-adjunction $(F,G)$ between $\VV_A$ and $\VV_B$ is governed by a bimodule ${}_AX_B$. Our next goal is to investigate what extra structure on $X$ is needed to lift $(F,G)$ to a $\VV$-adjunction $(\widetilde{F},\widetilde{G})$ between $\VV_A^C$ and $\VV_B^D$, such that the diagram of left adjoints 
\begin{equation} \label{eq:relative lefta adjoints}
\xymatrix{\VV_A^C \ar[r]^-{\widetilde{F}} \ar[d]^-{\UU_A} & \VV_B^D \ar[d]^-{\UU_B} \\ \VV_A \ar[r]^-F & \VV_B}
\end{equation}
commutes up to natural isomorphism. To this end, we introduce the notion of a \emph{braided bimodule}.

\begin{definition} \label{def:braided bimodule}  Let $(A,C)$ and $(B,D)$ be corings in a monoidal category $\VV$.
A \emph{braided $(A,C)$-$(B,D)$-bimodule} is a pair $(X,T)$ where $X$ is an $A$-$B$-bimodule and $T$ is a morphism of $A$-$B$-bimodules
$$T\colon C\tensor_A X\rightarrow X\tensor_B D$$
satisfying the following axioms.

\noindent {\bf (Pentagon axiom)}

The diagram
\begin{equation} \label{eq:pentagon}
\xymatrix{C\tensor_A X \ar[d]_-{\Delta_C\tensor 1} \ar[rr]^-{T} && X\tensor_B D \ar[d]^-{1\tensor \Delta_D} \\
C\tensor_A C\tensor_A X \ar[dr]_-{1\tensor T} && X\tensor_B D \tensor_B D \\
& C\tensor_A X\tensor_B D \ar[ur]_-{T\tensor 1}}
\end{equation}
commutes.

\vskip5pt
\noindent {\bf (Counit axiom)}

The diagram
\begin{equation} \label{eq:counit}
\xymatrix{C\tensor_A X \ar[rr]^-{T} \ar[d]^-{\epsilon_C\tensor 1} && X\tensor_B D \ar[d]^-{1\tensor \epsilon_D} \\
A\tensor_A X \ar[r]^-\cong & X & X\tensor_B B \ar[l]_-\cong}
\end{equation}
commutes.

We write $(X,T): (A,C) \to (B,D)$ to indicate that $(X,T)$ is a braided $(A,C)$-$(B,D)$-bimodule.
\end{definition}

\begin{definition}
A \emph{morphism $(X,T)\rightarrow (X',T')$ of braided bimodules} is a morphism of bimodules $f\colon {}_AX_B\rightarrow {}_AX'_B$ such that the diagram
$$\xymatrix{C\tensor_A X \ar[r]^-{T} \ar[d]_-{1\tensor f} & X\tensor_B D \ar[d]^-{f\tensor 1} \\ C\tensor_A X' \ar[r]^-{T'} & X'\tensor_B D}$$
commutes.
\end{definition}

\begin{remark}
The notion of a braided bimodule does not seem to have appeared in the literature before. The closest we have found is the notion of an \emph{entwining structure} (see \cite[\S32]{brzezinski-wisbauer}). The two notions are related as follows: a triple $(A,C)_\psi$ is an entwining structure if and only if $X= {}_\kk A_\kk$ and $T = \psi\colon C\tensor A\rightarrow A\tensor C$ define a braided bimodule from the coring $(\kk,C)$ to itself such that $\psi$ is a morphism of right $A$-modules and $\psi\circ (1\tensor \eta_A) = \eta_A\tensor 1$.
\end{remark}

Just as one may form the bicategory $\ALG_\VV$ of algebras and bimodules (cf.~\cite[(2.5)]{benabou}), we may define a bicategory $\CORING_\VV$ of corings and braided bimodules.

\begin{definition} \label{def:coring bicat}
The bicategory $\CORING_\VV$ has as objects corings $(A,C)$ in $\VV$. A $1$-morphism from $(A,C)$ to $(B,D)$ is a braided bimodule $(X,T):(A,C) \to (B,D)$, while a $2$-morphism is a morphism of braided bimodules, as in Definition \ref{def:braided bimodule}.

The composition of $1$-morphisms is given by tensoring, i.e., the composite of $(X,T):(A,C) \to (A',C')$ and $(X',T'):(A',C') \to (A'',C'')$ is the braided bimodule 
$$\big(X\otimes _{A'}X', (1\otimes T')(T\otimes1)\big):(A,C)\to (A'', C'').$$
The composition of $2$-morphisms is simply the usual composition of morphisms of $A$-$B$-bimodules.
\end{definition}

It is a straightforward exercise to prove first that $\big(X\otimes _{A'}X', (1\otimes T')(T\otimes1)\big)$ is indeed a braided bimodule and then that $\CORING_{\VV}$ does satisfy the axioms of a bicategory.  Note that forgetting the corings and braidings defines a bifunctor 
\begin{equation}\label{eqn:coring-to-alg}\CORING_{\VV}\to \ALG_{\VV}\end{equation} 
(cf.~Example \ref{ex:bimod}).

Given an algebra $A$, the \emph{trivial coring} is $A$ itself, with structure maps the natural isomorphisms. Every bimodule ${}_AX_B$ may be viewed as a braided bimodule between the trivial corings $(X,T)\colon (A,A)\rightarrow (B,B)$, where the braiding is the natural isomorphism $T\colon A\tensor_A X \cong X\tensor_B B$. This defines a bifunctor $\ALG_\VV\rightarrow \CORING_\VV$, which is a section of the bifunctor (\ref{eqn:coring-to-alg}).

\begin{remark} \label{rem:C=A}
If $C$ is the trivial $A$-coring $C=A$, then a braided bimodule $(X,T)\colon (A,A)\rightarrow (B,D)$ is the same thing as an $A$-$B$-bimodule $X$ together with a compatible right $D$-comodule structure on $X$.
\end{remark}

\begin{proposition} \label{prop:bbadj}
Let $\VV$ be a closed, symmetric monoidal category that admits all reflexive coequalizers and coreflexive equalizers.  If $(A,C)$ is a coring in $\VV$ such that $\VV_A^C$ admits all coreflexive equalizers, then
every braided bimodule $(X,T):(A,C)\to (B,D)$ gives rise to a $\VV$-adjunction
$$\bigadjunction{\VV_A^C}{\VV_B^D}{T_{*}}{T^{*}},\quad T_{*} \dashv T^{*},$$
such that the diagram of left adjoints,
$$\xymatrix{
\VV_A^C \ar[rr]^-{T_{*}} \ar[d]_-{\UU_A} && \VV_B^D \ar[d]^-{\UU_B} \\
\VV_A \ar[rr]^{-\tensor_A X} && \VV_B,}
$$
commutes.
\end{proposition}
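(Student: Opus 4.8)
The plan is to produce the left adjoint $T_*$ by hand, observe that the square of left adjoints then commutes by construction, and finally invoke the Adjoint Lifting Theorem to obtain the right adjoint $T^*$.

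First I would define $T_*\colon \VV_A^C\to\VV_B^D$. On an $(A,C)$-comodule $(M,\delta_M)$ I set $T_*(M) = M\tensor_A X$, with its evident right $B$-module structure, equipped with the $D$-coaction
$$\rho_M = (1_M\tensor T)\circ(\delta_M\tensor 1_X)\colon M\tensor_A X\xrightarrow{\delta_M\tensor 1} M\tensor_A C\tensor_A X\xrightarrow{1\tensor T} M\tensor_A X\tensor_B D,$$
and I send a comodule morphism $f$ to $f\tensor 1_X$. The content here is to check that $\rho_M$ is a counital, coassociative coaction. Coassociativity is a diagram chase that feeds the comodule coassociativity of $\delta_M$ into the pentagon axiom \eqref{eq:pentagon} for $T$ (the pentagon is exactly the identity needed to interchange the comultiplication of $C$ with that of $D$ across $T$), and counitality follows from the comodule counit axiom for $M$ together with the counit axiom \eqref{eq:counit} for $T$. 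Functoriality is immediate because $T$ is a morphism of $A$-$B$-bimodules, so $f\tensor 1_X$ is automatically $D$-colinear.

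By construction $\UU_B\circ T_* = (-\tensor_A X)\circ\UU_A$ on the nose, which is the commuting square of left adjoints asserted in the statement. Moreover $T_*$ is a tensor functor: on underlying $B$-modules it is $-\tensor_A X$, the canonical isomorphism $T_*(K\tensor M)\cong K\tensor T_*(M)$ is the one for $-\tensor_A X$, and it is $D$-colinear because the coaction on $K\tensor M\in\VV_A^C$ is $1_K\tensor\delta_M$. Consequently, once we exhibit a right adjoint $T^*$, the adjunction $T_*\dashv T^*$ will automatically be a $\VV$-adjunction by Proposition \ref{prop:V-adjunction}.

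The main obstacle is the existence of the right adjoint $T^*$, and this is where all the hypotheses are spent. I would apply the dual Adjoint Lifting Theorem \cite[\S4.5]{borceux}, exactly as in the proof of Proposition \ref{prop:comodule v-category}, to the commuting square of left adjoints above. The vertical forgetful functors $\UU_A$ and $\UU_B$ are comonadic, since $\VV_A^C$ and $\VV_B^D$ are by definition the categories of coalgebras for the comonads $-\tensor_A C$ on $\VV_A$ and $-\tensor_B D$ on $\VV_B$; the bottom functor $-\tensor_A X$ admits the right adjoint $\Map_B(X,-)$; and $\VV_A^C$ admits all coreflexive equalizers by hypothesis, which is precisely the completeness condition the theorem requires of the category in which the constructed right adjoint takes its values. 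These ingredients yield a right adjoint $T^*\colon \VV_B^D\to\VV_A^C$ with $T_*\dashv T^*$. If desired, one can make $T^*$ explicit: $T^*(N)$ is the coreflexive equalizer in $\VV_A^C$ of a pair of morphisms between the cofree $(A,C)$-comodules on $\Map_B(X,N)$ and on $\Map_B(X,N\tensor_B D)$, induced respectively by the coaction of $N$ and by the map coming from the braiding $T$; but for the statement only the abstract existence is needed. The delicate point to confirm is that the coreflexive-equalizer hypothesis is genuinely the one demanded by the lifting theorem, which I would justify via Remark \ref{rmk:coref-eq}.
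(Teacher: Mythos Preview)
Your proposal is correct and follows essentially the same route as the paper: define $T_*$ via the coaction $(1\tensor T)(\delta_M\tensor 1)$, verify the comodule axioms using the pentagon and counit axioms for $T$, and obtain $T^*$ from the dual Adjoint Lifting Theorem using comonadicity of $\UU_A,\UU_B$ together with the coreflexive-equalizer hypothesis on $\VV_A^C$. Your explicit observation that $T_*$ is a tensor functor (hence the adjunction is a $\VV$-adjunction via Proposition~\ref{prop:V-adjunction}) is a point the paper leaves implicit; the reference to Remark~\ref{rmk:coref-eq} at the end is slightly misplaced, since that remark concerns \emph{existence} of coreflexive equalizers rather than the form of the lifting-theorem hypothesis, but this is cosmetic.
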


\begin{proof}
Given $M\in \VV_A^C$, the braiding $T$ allows us to define a right $D$-comodule structure on the right $B$-module $M\tensor_A X$ to be the composite
\begin{equation} \label{eq:comodule}
\xymatrix{M\tensor_A X \ar[r]^-{\delta_M\tensor 1} & M\tensor_A C\tensor_A X \ar[r]^-{1\tensor T} & M\tensor_A X\tensor_B D.}
\end{equation}
Axioms \eqref{eq:pentagon} and \eqref{eq:counit} ensure that this morphism endows $M\tensor_A X$ with the structure of a $D$-comodule. More precisely, the commutativity of the diagram
{\small$$
\xymatrix@C=.5cm{
M\tensor_A X \ar[d]_-{\delta_M\tensor 1} \ar[rr]^-{\delta_M\tensor 1} &&
M\tensor_A C\tensor_A X \ar[d]^-{1\tensor \Delta_C\tensor 1} \ar[rr]^-{1\tensor T} &&
M\tensor_A X\tensor_B D \ar[dd]^-{1\tensor 1 \tensor \Delta_D} \\
M\tensor_A C \tensor_A X \ar[d]_-{1\tensor T} \ar[rr]^-{\delta_M\tensor 1\tensor 1} &&
M\tensor_A C \tensor_A C \tensor_A X \ar[d]^-{1\tensor T} &
M\tensor_A\left(\ref{eq:pentagon}\right) \\
M\tensor_A X\tensor_B D \ar[rr]_-{\delta_M\tensor 1\tensor 1} &&
M\tensor_A C\tensor_A X\tensor_B D \ar[rr]_{1\tensor T\tensor 1} &&
M\tensor_A X\tensor_B D\tensor_B D}
$$}\\
implies that the $D$-coaction \eqref{eq:comodule} is coassociative if and only if diagram \eqref{eq:pentagon} commutes after applying the functor $M\tensor_A -$ to it. Similarly, the commutativity of the diagram
$$\xymatrix{M\tensor_A X \ar[ddr]_-{\cong} \ar[r]^-{\delta_M\tensor 1} & M\tensor_A C \tensor_A X \ar[dd]_-{1\tensor \epsilon_C\tensor 1} \ar[rr]^-{1\tensor T} && M\tensor_A X\tensor_B D \ar[dd]^-{1\tensor 1 \tensor \epsilon_D} \\
&& M\tensor_A \left( \ref{eq:counit} \right) \\
& M\tensor_A A \tensor_A X \ar[r]^-{\cong} & M\tensor_A X & M\tensor_A X\tensor_B B \ar[l]_-\cong
}$$
implies that the $D$-coaction \eqref{eq:comodule} is counital if and only if the diagram \eqref{eq:counit} commutes after applying $M\tensor_A -$ to it. We can therefore set
$$T_{*}(M, \delta_{M})= \big(M\otimes _{A} X, (1\otimes T)(\delta_{M}\otimes 1) \big).$$

The existence of the right adjoint $T^{*}$ is ensured by the (dual) Adjoint Lifting Theorem, since $\UU_{A}$ and $\UU_{B}$ are comonadic, and $\VV_A^C$ admits all coreflexive equalizers by hypothesis. For $M\in \VV_B$ the value of $T^{*}$ at the cofree $D$-comodule $M\tensor_B D$ is the cofree $C$-comodule:
$$T^{*}(M\tensor_B D) = \Map_B(X,M)\tensor_A C.$$
In particular, $T^{*}(D)= \Map_B(X,B)\tensor_A C.$

In general, the right adjoint $T^{*}(N)$ can be calculated as the equalizer 
\begin{equation} \label{eq:R_X}
\Map_B(X,N)\tensor_A C \rightrightarrows \Map_B(X,N\tensor_B D)\tensor_A C
\end{equation}
where the top map is
$$(\delta_N)_*\tensor 1 \colon \Map_B(X,N)\tensor_A C \rightarrow \Map_B(X,N\tensor_B D)\tensor_A C,$$
and the bottom map is the composite
\begin{align*}
\Map_B(X,N)\tensor_A C & \xrightarrow{(-\tensor_B D)\tensor 1}\Map_B(X\tensor_B D,N\tensor_B D)\tensor_A C \\
& \xrightarrow {T^*\tensor 1}\Map_B(C\tensor_A X,N\tensor_B D)\tensor_A C \\
& \xrightarrow {1\tensor \Delta_C} \Map_B(C\tensor_A X,N\tensor_B D)\tensor_A C\tensor_A C \\
& \xrightarrow{g\tensor 1} \Map_B(X,N\tensor_B D)\tensor_A C,
\end{align*}
where the map $g \colon \Map_B(C\tensor_A X,N\tensor_B D)\tensor_A C \rightarrow \Map_B(X,N\tensor_B D)$ is the adjoint to the evaluation map
$$\Map_B(C\tensor_A X, N\tensor_B D)\tensor_A C\tensor_A X \rightarrow N\tensor_B D.$$
\end{proof}

\begin{remark}
We note here for later reference that, for any right $B$-module $M$, the $M\otimes_{B}D$-component of the counit of the $T_{*}\dashv T^{*}$ adjunction is given by the composite
$$\Map_{B}(X,M) \otimes_{A}C \otimes_{A}X \xrightarrow {1\otimes T} \Map_{B}(X,B) \otimes_{A}X \otimes_{B}D\xrightarrow{\mathrm{ev}\otimes 1} M\otimes _{B}D.$$
\end{remark}

\begin{remark}\label{rmk:interpret}
If $C$ is coaugmented, then $A$ is a $C$-comodule, and by plugging in $M = A$, the argument above shows that axioms \eqref{eq:pentagon} and \eqref{eq:counit} are equivalent to saying that \eqref{eq:comodule} defines a $D$-comodule structure on $M\tensor_A X$ for every $M\in \VV_A^C$. Note also that \eqref{eq:pentagon} may be interpreted as saying that $T$ is a morphism of right $D$-comodules, when $C\tensor_A X$ is given the $D$-comodule structure \eqref{eq:comodule}.
\end{remark}

\begin{remark}
There is a natural bijection between $C$-$D$-braidings $T$ on a bimodule ${}_AX_B$ and $(A,C)$-$(B,D)$-bicomodule structures $\delta$ on $C\tensor_A X$ that extend the given left $(A,C)$-comodule and right $B$-module structures. Indeed, given $T$ we may define $\delta$ as the composite
$$C\tensor_A X \xrightarrow{\Delta_C \tensor 1} C\tensor_A C\tensor_A X \xrightarrow{1\tensor T} C\tensor_A X\tensor_B D .$$
Conversely, given $\delta$ we may define $T$ as the composite
$$C\tensor_A X \xrightarrow{\delta} C\tensor_A X\tensor_B D\xrightarrow{\epsilon_C\tensor 1\tensor 1} X\tensor_B D.$$
\end{remark}

By Proposition \ref{prop:bbadj}, every braided bimodule $(X,T):(A,C)\to (B,D)$ gives rise to a $\VV$-adjunction between $\VV_A^C$ and $\VV_B^D$ relative to the $\VV$-adjunction between $\VV_A$ and $\VV_B$ governed by $X$. In fact, all relative $\VV$-adjunctions arise in this way. Before establishing the general case, we examine some important special cases.

\begin{example}[Forgetful functor] \label{ex:braided bimodules 1}
Recall that for any algebra $A$ in $\VV$, the canonical isomorphism $A\rightarrow A\tensor_A A$ and the identity map on $A$ make $(A,A)$ into a coring, the \emph{trivial coring}. Moreover, the forgetful functor $\UU_A\colon \VV_A^A \rightarrow \VV_A$ is an isomorphism of categories. For any coring $(A,C)$, with counit $\epsilon_C\colon C \to A$, the adjunction
$$\bigadjunction{\VV_A^C}{\VV_A = \VV_A^A}{\UU_A}{-\tensor_A C}$$
is governed by the braided bimodule $(X,T)\colon(A,C)\to (A,A)$, where $X = A$ and $T=\epsilon_C \colon C\rightarrow A$, i.e., $\UU_{A}=(\epsilon_C)_{*}$, and $-\otimes_{A}C= \epsilon_C^{*}$.
\end{example}

\begin{example}[Morphisms of corings] \label{ex:braided bimodules 2}
Just as morphisms of algebras give rise to bimodules (cf.~Example \ref{example:morphism}), morphisms of corings give rise to braided bimodules. The braided bimodule associated to a morphism of corings,
$$(\varphi,f)\colon (A,C)\rightarrow (B,D),$$ 
has underlying $A$-$B$-bimodule ${}_AB_B$, where $A$ acts on $B$ through $\varphi$. The braiding 
$$T_{\varphi, f}\colon C\tensor_A B \rightarrow B\tensor_B D \cong D$$ 
is defined to be the composite
$$C\tensor_A B \cong A\tensor_A C\tensor_A B \xrightarrow{\varphi\tensor 1\tensor 1} B\tensor_A C\tensor_A B \xrightarrow{f} D.$$

We have thus constructed a braided bimodule $(B, T_{\vp, f})\colon (A,C) \to (B,D)$, inducing an adjunction
$$\bigadjunction{\VV_{A}^{C}}{\VV_{B}^{D}}{(T_{\vp,f})_{*}}{(T_{\vp,f})^{*}},$$
as long as $\VV_{A}^{C}$ admits all coreflexive equalizers. Observe that it is \textbf{not} necessary for the monoidal structure on $\VV$ to be closed in order for this adjunction to exist, as we are lifting the adjunction
$$\bigadjunction{\VV_{A}^{}}{\VV_{B}^{}}{-\otimes_{A}B}{\vp^{*}},$$
which exists even if $\VV$ is not closed, unlike when $X\not=B$. Note also that the $D$-component of the counit of the adjunction $(T_{\vp,f})_{*}\dashv (T_{\vp,f})^{*}$ may be identified with the morphism
$$f:B \otimes_{A}C \otimes_{A}B\to D.$$
\end{example}

\begin{example}[Change of corings] \label{ex:braided bimodules 3}
When $A=B$ and  $\varphi=1_A\colon A \to A$ in the example above, the braiding $T_{1_A,f}: C\otimes_{A}A\to A \otimes _{A} D$ is nothing but the morphism of $A$-corings $f:C\to D$, up to isomorphism in the source and target. We denote the induced adjunction
\begin{equation}\label{eq:corestriction}
\bigadjunction{\VV_{A}^{C}}{\VV_{A}^{D}}{f_{*}}{f^{*}}
\end{equation}
and call it the \emph{coextension/corestriction-of-scalars adjunction} or \emph{change-of-corings adjunction} associated to $f$.  Note that the $D$-component of the counit of the $f_{*}\dashv f^{*}$ adjunction  is $f$ itself and that for every $(A,C)$-comodule $(M, \delta)$,
$$f_{*}(M,\delta)= \big(M, (1\otimes f)\delta\big).$$
\end{example}

We will now establish the general case.

\begin{theorem} \label{thm:relative classification}
Let $\VV$ be a closed symmetric monoidal category admitting all reflexive coequalizers and coreflexive equalizers.

If $(A,C)$ is a coaugmented coring such that  $\VV_A^C$ admits all coreflexive equalizers, and $(B,D)$ is any coring, then every $\VV$-adjunction between $\VV_A^C$ and $\VV_B^D$, relative to a $\VV$-adjunction between $\VV_A$ and $\VV_B$, is governed by a braided bimodule.
\end{theorem}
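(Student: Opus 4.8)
The plan is to reduce to the module-level classification and then read off the braiding from the $D$-comodule structures that the lift $\widetilde F$ places on images of cofree comodules. First I would apply Proposition \ref{prop:classification} to the underlying $\VV$-adjunction $(F,G)$: it is governed by $X := F(A)$, so $F \cong -\tensor_A X$ as $\VV$-functors. Since the square \eqref{eq:relative lefta adjoints} commutes, every $\widetilde F(M)$ has underlying $B$-module $M\tensor_A X$ together with a $D$-coaction $\delta_{\widetilde F M}$ that is $\VV$-natural in $M$.

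To define the braiding I would evaluate $\widetilde F$ at the cofree comodule $A\tensor_A C \cong C$, obtaining a coaction $\omega\colon C\tensor_A X \to C\tensor_A X\tensor_B D$, and set $T := (\epsilon_C\tensor 1\tensor 1)\circ\omega\colon C\tensor_A X \to X\tensor_B D$. The crucial input is a rigidity statement: the composite $\widetilde F(-\tensor_A C)\colon \VV_A \to \VV_B^D$ is a $\VV$-functor with underlying functor $-\tensor_A(C\tensor_A X)$, so its coaction is a $\VV$-natural transformation $-\tensor_A(C\tensor_A X) \Rightarrow -\tensor_A(C\tensor_A X\tensor_B D)$; by the enriched Eilenberg--Watts argument of Proposition \ref{prop:classification}, every such transformation equals $1\tensor\omega$ for a unique $A$-$B$-bimodule map $\omega$, namely its value at $A$. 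Consequently $T$ is a morphism of $A$-$B$-bimodules, and $\delta_{\widetilde F(N\tensor_A C)} = 1_N\tensor\omega$ for every $N \in \VV_A$.

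I would then compute the coaction on an arbitrary comodule $M$. Feeding the comodule morphism $\delta_M\colon M \to M\tensor_A C$ (the coaction, viewed as a map into the cofree comodule on $\UU_A M$) into the naturality of $\delta_{\widetilde F(-)}$, substituting $\delta_{\widetilde F(M\tensor_A C)} = 1_M\tensor\omega$, and postcomposing with $\epsilon_C$ while using the counit law $(1\tensor\epsilon_C)\delta_M = 1_M$, I obtain $\delta_{\widetilde F M} = (1_M\tensor T)(\delta_M\tensor 1)$, which is exactly the $D$-coaction \eqref{eq:comodule} attached to $T$. For every $M$ this right-hand side is a genuine (coassociative and counital) $D$-comodule structure, being the coaction of $\widetilde F(M)$; hence, because $C$ is coaugmented, Remark \ref{rmk:interpret} forces the pentagon and counit axioms, so $(X,T)$ is a braided bimodule. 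The same formula shows $T_* \cong \widetilde F$, whence $T^* \cong \widetilde G$ by uniqueness of adjoints, and the given adjunction is therefore governed by $(X,T)$.

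The main obstacle is the rigidity underlying the second and third steps: one must exploit that $\widetilde F$ is a $\VV$-functor, not merely an ordinary one, to guarantee that the coactions it assembles are all controlled by the single bimodule map $T$ rather than varying with $M$. Both the left $A$-linearity of $T$ and the identity $\delta_{\widetilde F(N\tensor_A C)} = 1_N\tensor\omega$ rest on this enriched naturality, and it is here that the closedness of $\VV$ and the $\VV$-enrichments of $\VV_A^C$ and $\VV_B^D$ become essential. The coaugmentation hypothesis, finally, serves precisely to license the last passage, via Remark \ref{rmk:interpret}, from the validity of the braided coaction for all $M$ to the pentagon and counit identities for $T$ itself.
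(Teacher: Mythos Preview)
Your proposal is correct and follows essentially the same route as the paper: define $T$ from the $D$-coaction on $\widetilde F(C)$ by postcomposing with $\epsilon_C\tensor 1\tensor 1$, use naturality of $\widetilde F$ at the comodule map $\delta_M\colon M\to M\tensor_A C$ to identify $\delta_{\widetilde F M}$ with $(1_M\tensor T)(\delta_M\tensor 1)$, and invoke Remark~\ref{rmk:interpret} at $M=A$ (via the coaugmentation) to obtain the pentagon and counit axioms. The paper compresses your ``rigidity'' step into the single assertion $\widetilde F(M\tensor_A C)\cong M\tensor_A \widetilde F(C)$ in $\VV_B^D$; your unpacking of why this holds, via the enriched Eilenberg--Watts argument applied to the composite $\VV$-left-adjoint $\widetilde F\circ(-\tensor_A C)\colon \VV_A\to \VV_B^D$, makes explicit exactly where the $\VV$-enrichment is used.
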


\begin{proof}
Consider a relative adjunction $(\widetilde{F},\widetilde{G})$, as in \eqref{eq:relative lefta adjoints}. By Proposition \ref{prop:classification} the underlying adjunction $F\colon \VV_A \leftrightarrows \VV_B: G$ is governed by a bimodule ${}_AX_B$. We have to construct a braiding $T\colon C\otimes _{A}X\to X\otimes_{B}D$ and show that $(X,T)\colon(A,C)\to (B,D)$ governs the adjunction we started with. 

Since the diagram  \eqref{eq:relative lefta adjoints} of left adjoints commutes, for every $C$-comodule $(M,\delta)$, the $B$-module underlying $\widetilde{F}(M, \delta)$ is isomorphic to $M\tensor_A X$. Let
$$\tilde\delta \colon M\tensor_A X \rightarrow M\tensor_A X \tensor_B D$$
denote the right $D$-comodule structure on $\widetilde{F}(M, \delta)$. Define $T\colon C\tensor_A X\rightarrow X\tensor_B D$ to be the composite
$$C\tensor_A X \xrightarrow{\widetilde \Delta_{}} C\tensor_A X\tensor_B D \xrightarrow{\epsilon_C \tensor 1 \tensor 1} A\tensor_A X\tensor_B D \cong X\tensor_B D,$$
where $\Delta : C\to C\otimes _{A}C$ is the comultiplication on $C$, seen as a right $C$-coaction on $C$, and $\epsilon\colon C\rightarrow A$ is the counit of $C$. We have to verify axioms \eqref{eq:pentagon} and \eqref{eq:counit} and show that the comodule structure $\tilde\delta$ on $M\tensor_A X$ agrees with the one induced from $T$ as in \eqref{eq:comodule}.

To check this last condition, consider the diagram
\begin{equation} \label{eq:M}
\xymatrix{M\tensor_A X \ar[r]^-{\tilde\delta} \ar[d]_-{\delta\tensor 1} & M\tensor_A X \tensor_B D \ar[d]_-{\delta\tensor 1\tensor 1} \ar@{=}[drr] \\
M\tensor_A C\tensor_A X \ar[r]_-{1\tensor \tilde \Delta_{}} & M\tensor_A C\tensor_A X \tensor_B D \ar[rr]_-{1\tensor \epsilon\tensor 1 \tensor 1} && M\tensor_A X\tensor_B D.}
\end{equation}
Commutativity of the left square is equivalent to the fact that $\widetilde{F}(\delta)$ is a morphism of $D$-comodules, since $\widetilde{F}(M\tensor_A C)\cong M\tensor_A \widetilde{F}(C)$ in $\VV_B^D$. Commutativity of the right triangle is simply the counit axiom for the $C$-comodule structure on $M$. Axioms \eqref{eq:pentagon} and \eqref{eq:counit} hold automatically, because they are equivalent to saying that \eqref{eq:comodule} defines a $D$-comodule structure on $A\tensor_A X$ (cf. Remark \ref{rmk:interpret}), and we know a priori that $\delta_A$ defines a $D$-comodule structure on $A\tensor_A X$.
\end{proof}

\begin{remark}
Not every adjunction between $\VV_A^C$ and $\VV_B^D$ is governed by a braided bimodule. For instance, this is usually not the case for adjunctions arising from twisting cochains.
\end{remark}

\subsection{Cotensor products}
The right adjoint $T^{*}$ in the adjunction governed by a braided bimodule $(X,T)$ is difficult to describe in general. However, we will show that under appropriate conditions on the underlying (bi)modules ${}_AX_B$  and ${}_{A}C$, it is possible to express $T^*$ as a cotensor product.

\begin{definition}\label{defn:a-flat}
We call a left $A$-module $N$ \emph{flat} if the functor $-\tensor_A N\colon \VV_A\to \VV$ preserves coreflexive equalizers.
\end{definition}

\begin{remark}
If $\VV$ is an abelian category, then it is easy to show that $-\tensor_A N$ preserves coreflexive equalizers if and only if $N$ is flat in the usual sense that $-\tensor_A N$ preserves monomorphisms.

Note that the notions of flatness and homotopy flatness (Definition \ref{defn:special-modules}) are different in general. For instance, if the weak equivalences in $\VV$ are the isomorphisms, then every module is homotopy flat but not necessarily flat. 
\end{remark}

\begin{definition}\label{defn:flat-coring}
We will call a coring $(A,C)$ \emph{flat} if $C$ is flat as a left $A$-module, i.e., if $-\tensor_A C\colon \VV_A\to \VV$ preserves coreflexive equalizers.
\end{definition}

The next proposition follows directly from the observations in Remark \ref{rmk:coref-eq}.
\begin{proposition}
If $(A,C)$ is a flat coring, then the forgetful functor $\UU_A\colon \VV_A^C\to \VV_A$ creates coreflexive equalizers. 
\end{proposition}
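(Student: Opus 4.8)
The plan is to reduce everything to a single consequence of flatness: that the endofunctor $-\tensor_A C\colon \VV_A\to \VV_A$ preserves coreflexive equalizers. Since $\VV_A$ is monadic over $\VV$, the forgetful functor $\UU\colon \VV_A\to \VV$ creates all limits that exist in $\VV$; in particular $\VV_A$ has coreflexive equalizers, computed in $\VV$, and $\UU$ both preserves and reflects them. By definition of a flat coring, $-\tensor_A C\colon \VV_A\to \VV$ preserves coreflexive equalizers. Composing this with the fact that $\UU$ reflects them, I conclude that the endofunctor $-\tensor_A C\colon \VV_A\to \VV_A$ (where the target $A$-action comes from the right $A$-action on the bimodule $C$) preserves coreflexive equalizers, and hence so does its iterate $-\tensor_A C\tensor_A C$. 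At this point one could simply invoke the dual of Linton's theorem, as quoted in Remark \ref{rmk:coref-eq}, to deduce that $\UU_A$ creates coreflexive equalizers; but I will indicate the short direct construction.

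Given a coreflexive pair $f,g\colon M\rightrightarrows N$ in $\VV_A^C$ with common retraction $s$, let $e\colon E\to M$ be the equalizer of the underlying coreflexive pair in $\VV_A$. Applying $-\tensor_A C$ to $s$ shows that $f\tensor 1,g\tensor 1$ and $f\tensor 1\tensor 1,g\tensor 1\tensor 1$ are again coreflexive, so by the first step $e\tensor 1$ and $e\tensor 1\tensor 1$ are their equalizer maps; in particular they are monomorphisms, and $E\tensor_A C$ carries the universal property of the equalizer of $f\tensor 1,g\tensor 1$. Since $fe=ge$, the map $\delta_M e\colon E\to M\tensor_A C$ equalizes $f\tensor 1$ and $g\tensor 1$, so it factors uniquely as $(e\tensor 1)\delta_E$ through a unique $A$-module map $\delta_E\colon E\to E\tensor_A C$. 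This $\delta_E$ is the candidate coaction, and the defining relation $(e\tensor 1)\delta_E=\delta_M e$ says precisely that $e$ is a morphism of comodules.

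It then remains to verify that $(E,\delta_E)$ is an $(A,C)$-comodule, that it is the equalizer in $\VV_A^C$, and that the lift is unique; each of these is a formal consequence of the monomorphism property of the tensored equalizer maps. Coassociativity and counitality follow by post-composing the two sides of each identity with the monomorphism $e\tensor 1\tensor 1$ (respectively $e$) and using the relation $(e\tensor 1)\delta_E=\delta_M e$ together with the corresponding axioms for $M$. For the universal property, any comodule map $h\colon P\to M$ equalizing $f$ and $g$ factors through $e$ in $\VV_A$, and the factorization is automatically a comodule map, again because $e\tensor 1$ is monic. Finally, any coaction on $E$ for which $e$ is a comodule map must agree with $\delta_E$ after composition with the monomorphism $e\tensor 1$, hence equals $\delta_E$; this gives uniqueness of the lift, so $\UU_A$ creates coreflexive equalizers.

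The only real content is the first step. Flatness is exactly what is needed both to define $\delta_E$ via the universal property and to make the tensored equalizer maps monomorphic, after which every verification collapses to a one-line diagram chase. I therefore expect no genuine obstacle beyond correctly extracting the preservation statement for the endofunctor $-\tensor_A C$ from the flatness hypothesis; the remainder is routine.
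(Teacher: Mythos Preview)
Your proof is correct and follows the same route as the paper, which simply says the proposition ``follows directly from the observations in Remark \ref{rmk:coref-eq}'' (i.e., the dual of Linton's theorem). You have spelled out two points the paper leaves implicit: first, the passage from the flatness hypothesis (which concerns $-\tensor_A C\colon \VV_A\to \VV$) to preservation of coreflexive equalizers by the \emph{endofunctor} $-\tensor_A C\colon \VV_A\to \VV_A$, via the fact that the monadic forgetful functor $\UU\colon \VV_A\to \VV$ creates limits; and second, the explicit construction underlying Linton's result. Both additions are sound and the direct argument you give is the standard one.
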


\begin{definition}
Suppose that $\VV$ admits coreflexive equalizers. Let $(A,C)$ be a coring in $\VV$, let $M$ be a right and $N$ a left $(A,C)$-comodule. The \emph{cotensor product} $M\cotensor_C N$ is defined as the coreflexive equalizer in $\VV$:
$$
\equalizer{M\cotensor_C N}{M\tensor_A N}{M\tensor_A C \tensor_A N.}{\delta_M\tensor 1}{1\tensor \delta_N}
$$
\end{definition}

\begin{proposition} \label{prop:R_X dualizable}
Let $\VV$ be a closed symmetric monoidal category admitting all reflexive coequalizers and coreflexive equalizers. Let $(A,C)$ be a flat coring in $\VV$, and let $(X,T)\colon (A,C)\rightarrow (B,D)$ be a braided bimodule. 

If the underlying bimodule ${}_AX_B$ admits a strict right dual $X^\vee$, then $X^\vee\tensor_A C$ is a left $(B,D)$-comodule in $\VV_A^C$, and the right adjoint of the adjunction governed by $(X,T)$ is isomorphic to the cotensor product functor $-\cotensor_{D}(X^\vee \tensor_A C)$, i.e., there is an adjunction
$$
\hugeadjunction{\VV_A^C}{\VV_B^D}{T_{*}}{-\cotensor_{D} (X^\vee\tensor_A C)}.
$$
\end{proposition}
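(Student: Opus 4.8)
The plan is to start from the explicit equalizer formula for the right adjoint $T^{*}$ obtained in Proposition~\ref{prop:bbadj} and rewrite it, using the strict right dual, as the defining equalizer of a cotensor product. The essential consequence of the existence of a strict right dual is that, by Lemma~\ref{lem:char-dual}, the canonical map $\ell_N\colon N\tensor_B X^\vee \to \Map_B(X,N)$ is a natural isomorphism for every right $B$-module $N$, where I identify $X^\vee$ with $\Map_B(X,B)$. Applying $\ell$ to the cofree comodules appearing in \eqref{eq:R_X} turns the equalizer computing $T^{*}(N)$ into the parallel pair
$$N\tensor_B X^\vee\tensor_A C \rightrightarrows N\tensor_B D\tensor_B X^\vee\tensor_A C,$$
so that the whole problem reduces to identifying this pair with the pair defining a cotensor product.

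First I would equip $L:=X^\vee\tensor_A C$ with its claimed structure of a left $(B,D)$-comodule in $\VV_A^C$, i.e. of a $(B,D)$-$(A,C)$-bicomodule. The right $(A,C)$-comodule structure is the cofree one, $1\tensor\Delta_C$, and the left $B$-action comes from $X^\vee$. For the left $D$-coaction, let $\widehat T\colon X^\vee\tensor_A C\to D\tensor_B X^\vee$ be the mate of $T$ obtained by inserting the coevaluation $u\colon A\to X\tensor_B X^\vee$, applying $1\tensor T\tensor 1$, and contracting with the evaluation $e\colon X^\vee\tensor_A X\to B$; then set
$$\delta_L := (\widehat T\tensor 1)(1\tensor \Delta_C)\colon X^\vee\tensor_A C \to D\tensor_B X^\vee\tensor_A C.$$
Coassociativity and counitality of $\delta_L$, together with its compatibility with the right $C$-coaction, follow from the pentagon axiom \eqref{eq:pentagon} and counit axiom \eqref{eq:counit} for $T$, the coassociativity of $\Delta_C$, and the triangle identities for $u$ and $e$. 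Conceptually, $L$ is obtained by dualizing the $X$-factor of the $(A,C)$-$(B,D)$-bicomodule $C\tensor_A X$ discussed in the remarks following Proposition~\ref{prop:bbadj}, so these verifications are routine diagram chases.

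Next I would match the two parallel pairs. Naturality of $\ell$ identifies the top map $(\delta_N)_{*}\tensor 1$ of \eqref{eq:R_X} with $\delta_N\tensor 1_L$, which is the top map of the cotensor product $N\cotensor_D L$. The crux is to identify the bottom composite of \eqref{eq:R_X}---built from $-\tensor_B D$, precomposition with $T$, $1\tensor\Delta_C$, and the adjoint $g$ of the evaluation---with $1_N\tensor\delta_L$. This is the step I expect to be the main obstacle: it requires transporting the ``mapping-space'' description of the bottom map through the dualization isomorphism $\ell$ into the ``tensor'' description encoded by $\widehat T$, $u$, and $e$, and the triangle identities are precisely what make the two presentations agree. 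Once this is checked, $T^{*}(N)$ and $N\cotensor_D L$ are the equalizers of the same pair.

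Finally, the flatness of $(A,C)$ enters to promote this identification from $\VV$ to $\VV_A^C$. Since $C$ is flat, $\UU_A$ creates coreflexive equalizers, so the underlying object of $T^{*}(N)$ is the coreflexive equalizer in $\VV$ of the pair above; by the same flatness, $-\tensor_A C$ preserves the coreflexive equalizer defining $N\cotensor_D L$, so the cotensor product inherits the right $(A,C)$-comodule structure of $L$ and realizes this equalizer inside $\VV_A^C$. The resulting isomorphism $T^{*}(N)\cong N\cotensor_D(X^\vee\tensor_A C)$ is natural in $N$ by construction, which yields the asserted adjunction.
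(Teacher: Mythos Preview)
Your proposal is correct and follows essentially the same approach as the paper: define the left $D$-coaction on $X^\vee\tensor_A C$ via the mate of $T$ (the paper writes out the identical composite), use the dualization isomorphism $\ell_N\colon N\tensor_B X^\vee\cong\Map_B(X,N)$ to identify the equalizer \eqref{eq:R_X} with the cotensor equalizer, and invoke flatness of $(A,C)$ so that $\UU_A$ creates coreflexive equalizers. You supply more detail than the paper on matching the bottom maps, but the argument is the same.
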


\begin{proof}
The left $D$-comodule structure on $X^\vee\tensor_A C$ is defined by the following composite:
\begin{align*}
X^\vee \tensor_A C & \xrightarrow{1\otimes \Delta} X^\vee \tensor_A C\tensor_A C \\
& \xrightarrow{1\otimes u\otimes 1} X^\vee \tensor_A C \tensor_A X\tensor_B X^\vee \tensor_A C \\
& \xrightarrow {1\otimes T\otimes 1} X^\vee \tensor_A X\tensor_B D\tensor_B X^\vee \tensor_A C \\
& \xrightarrow {e\otimes 1} D\tensor_B X^\vee \tensor_A C.
\end{align*}
Here, $\Delta$ is the comultiplication on $C$, the map $u$ is the coevaluation, and $e$ is the evaluation. We leave it to the reader to verify that the axioms for a comodule are satisfied. It follows from the natural isomorphism $N\tensor_B X^\vee \cong \Map_B(X,N)$, which holds because $X$ is dualizable, that the coreflexive equalizer diagram \eqref{eq:R_X} defining $T^{*}(N)$ may be identified with the equalizer diagram defining the cotensor product $N\cotensor_D (X^\vee\tensor_A C)$. Note that there is a subtlety in that the coreflexive equalizer \eqref{eq:R_X} should be calculated in  $\VV_A^C$, whereas the coreflexive equalizer defining the cotensor product should be calculated in $\VV_A$. Since we assume that the coring $(A,C)$ is flat, the forgetful functor $\UU_A\colon \VV_A^C\to \VV_A$ creates coreflexive equalizers, so we may identify the two.
\end{proof}

Important special cases of braided bimodules for which Proposition \ref{prop:R_X dualizable} applies are the braided bimodules associated to morphisms of corings. Indeed, let $(\varphi,f)\colon (A,C)\rightarrow (B,D)$ be a morphism of corings and let $(B,T_{\varphi,f})$ be the associated braided bimodule (see Example \ref{ex:braided bimodules 2}). The underlying bimodule $X={}_A B_B$ is dualizable, with right dual $X^\vee = {}_B B_A$. If the coring $(A,C)$ is flat, it follows that the adjunction governed by the morphism $(\varphi,f)$ can be written as
$$
\bigadjunction{\VV_A^C}{\VV_B^D}{-\tensor_A B}{-\cotensor_D (B\tensor_A C)}.
$$
Specializing further, if $A=B$, $\varphi\colon A\rightarrow B$ is the identity map, and $f\colon C\rightarrow D$ is a morphism of $A$-corings, we recover the familiar change of corings adjunction
$$
\bigadjunction{\VV_A^C}{\VV_A^D}{f_*}{-\cotensor_D C}.
$$

\subsection{The canonical coring} \label{subsec:canonical coring}
In this section we introduce the \emph{canonical coring} $X_*(C)$ associated to a coring $(A,C)$ and a right dualizable bimodule ${}_AX_B$. The canonical coring generalizes the descent coring associated to a morphism of algebras, and it will be useful for our analysis of Quillen equivalences between comodule categories in Section \ref{sec:m-t}.

\begin{proposition} \label{prop:universal coring}
Let $A$ and $B$ be algebras in $\VV$, and let ${}_AX_B$ be a right dualizable bimodule. For every $A$-coring $C$, there is a $B$-coring $X_*(C)$ and a braided bimodule $(X,T_C^{\mathrm{univ}})\colon (A,C) \rightarrow \big(B,X_*(C)\big)$ satisfying the following universal property: for every coring $(B',D)$ and every braided bimodule $(Z,T)\colon (A,C)\rightarrow (B',D)$ with $Z=X\tensor_B Y$ for some bimodule ${}_BY_{B'}$, there is a unique braided bimodule $(Y,S)$ such that the following diagram in $\CORING_\VV$ commutes.
$$
\xymatrix{(A,C) \ar[rr]^-{(X,T_{C}^{\mathrm{univ}})} \ar[drr]_-{(X\tensor_B Y,T)\;\;} && (B,X_*(C)) \ar@{-->}[d]^-{(Y,S)} \\
&& (B',D)}
$$
In particular, for every braiding $T\colon C\tensor_A X\rightarrow X\tensor_B D$, there is a unique morphism of $B$-corings 
$$g_{T}\colon X_{*}(C) \rightarrow D$$ 
making the following diagram in $\CORING_\VV$ commute:
$$
\xymatrix{(A,C) \ar[rr]^-{(X,T_{C}^{\mathrm{univ}})} \ar[drr]_-{(X,T)} && (B,X_{*}(C)) \ar@{-->}[d]^-{(B, T_{1,g_{T}})} \\
&& (B,D).}
$$
\end{proposition}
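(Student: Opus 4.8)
The plan is to realise $X_*(C)$ concretely as the $B$-bimodule $X^\vee\tensor_A C\tensor_A X$, where $X^\vee$ is the strict right dual of ${}_AX_B$ with coevaluation $u\colon A\to X\tensor_B X^\vee$ and evaluation $e\colon X^\vee\tensor_A X\to B$ (see Proposition~\ref{prop:R_X dualizable}). First I would equip it with a coring structure: the counit $\epsilon\colon X_*(C)\to B$ is the composite $e\circ(1\tensor \epsilon_C\tensor 1)$, and the comultiplication is
\[
X^\vee\tensor_A C\tensor_A X \xrightarrow{1\tensor \Delta_C\tensor 1} X^\vee\tensor_A C\tensor_A C\tensor_A X \xrightarrow{1\tensor 1\tensor u\tensor 1\tensor 1} X_*(C)\tensor_B X_*(C),
\]
the second arrow inserting the coevaluation $u$ in the middle $C\tensor_A C=C\tensor_A A\tensor_A C$. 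Coassociativity is immediate from the coassociativity of $\Delta_C$ (two coevaluations are inserted in independent positions), whereas counitality is exactly where the triangle identities for $(u,e)$ are used: collapsing the inserted $X\tensor_B X^\vee$ against $e$ returns the identity. This shows $(B,X_*(C))$ is a $B$-coring.

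Next I would define the universal braiding as the composite
\[
T_C^{\mathrm{univ}}\colon C\tensor_A X \cong A\tensor_A C\tensor_A X \xrightarrow{u\tensor 1\tensor 1} X\tensor_B X^\vee\tensor_A C\tensor_A X = X\tensor_B X_*(C),
\]
and verify the pentagon \eqref{eq:pentagon} and counit \eqref{eq:counit} axioms. Both are short diagram chases: for the pentagon, both composites insert $u$ twice in the positions matched up by $\Delta_{X_*(C)}$, so the two sides agree after expanding $\Delta_{X_*(C)}$; the counit axiom follows by collapsing the single coevaluation against $e$ via a triangle identity. This produces the braided bimodule $(X,T_C^{\mathrm{univ}})\colon (A,C)\to (B,X_*(C))$.

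For the universal property, given a braided bimodule $(X\tensor_B Y,T)\colon(A,C)\to(B',D)$, I would use the identification $X_*(C)\tensor_B Y=X^\vee\tensor_A C\tensor_A(X\tensor_B Y)$ to define the candidate braiding
\[
S\colon X^\vee\tensor_A C\tensor_A X\tensor_B Y \xrightarrow{1\tensor T} X^\vee\tensor_A X\tensor_B Y\tensor_{B'}D \xrightarrow{e\tensor 1\tensor 1} Y\tensor_{B'}D.
\]
The factorization identity $(1\tensor S)(T_C^{\mathrm{univ}}\tensor 1)=T$ then follows from the first triangle identity $(1\tensor e)(u\tensor 1)=1_X$, which collapses the coevaluation introduced by $T_C^{\mathrm{univ}}$ against the evaluation in $S$. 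Uniqueness is dual: any braiding $S'$ with $(1\tensor S')(T_C^{\mathrm{univ}}\tensor 1)=T$ satisfies $S'=(e\tensor 1)(1\tensor(1\tensor S')(T_C^{\mathrm{univ}}\tensor 1))=(e\tensor 1)(1\tensor T)=S$ by the second triangle identity, so $S$ is forced. The remaining, and most laborious, step is to check that $S$ itself satisfies the pentagon and counit axioms with respect to $X_*(C)$; this is a diagram chase that feeds the braiding axioms for $T$ through the definitions of $\Delta_{X_*(C)}$ and $\epsilon_{X_*(C)}$, the inserted coevaluation/evaluation pairs cancelling by the triangle identities. I expect this verification to be the main obstacle, as it is the one point where the coring axioms of $X_*(C)$, the braiding axioms of $T$, and both triangle identities must all be combined.

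Finally, the ``in particular'' statement is the special case $B'=B$ and $Y={}_BB_B$, for which $X\tensor_B Y\cong X$ recovers the given $(X,T)$ and $S$ becomes a braiding $g_T\colon X_*(C)\cong X_*(C)\tensor_B B\to B\tensor_B D\cong D$ whose underlying bimodule is the identity $B$-bimodule. By Example~\ref{ex:braided bimodules 3}, such a braiding is the same datum as a morphism of $B$-corings $g_T\colon X_*(C)\to D$ with $S=T_{1,g_T}$, and the factorization $(1\tensor S)(T_C^{\mathrm{univ}}\tensor 1)=T$ is precisely the commutativity of the stated triangle in $\CORING_\VV$. Uniqueness of $g_T$ is inherited from uniqueness of $S$.
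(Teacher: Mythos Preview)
Your proposal is correct and follows exactly the same construction as the paper: the same definition of $X_*(C)=X^\vee\tensor_A C\tensor_A X$ with the same comultiplication, counit, universal braiding $T_C^{\mathrm{univ}}=u\tensor 1$, and induced braiding $S=(e\tensor 1\tensor 1)(1\tensor T)$. In fact you go further than the paper, which explicitly leaves all verifications to the reader, by sketching the coring and braiding axioms and supplying the uniqueness argument via the triangle identities.
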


\begin{remark}
Proposition \ref{prop:universal coring} may be formulated succinctly by saying that $(X,T_C^{\mathrm{univ}})\colon (A,C)\rightarrow (B,X_*(C))$ is a co-cartesian morphism over $X\colon A\rightarrow B$, under the forgetful functor $\CORING_\VV\rightarrow \ALG_\VV$.  In other words, the pullback of the forgetful functor $\CORING_\VV\rightarrow \ALG_\VV$ along the subcategory of $\ALG_\VV$ consisting of right dualizable morphisms is a co-cartesian fibration. In particular, since the underlying bimodule of the braided bimodule associated to a morphism of corings is always right dualizable, the category $\Coring_\VV$ is cofibered over $\Alg_\VV$.
\end{remark}

\begin{proof}
Let $u\colon A\rightarrow X\tensor_B X^\vee$ and $e\colon X^\vee\tensor_A X\rightarrow B$ denote the coevaluation and evaluation maps, and let $\Delta\colon C\to C\otimes_{A}C$ and $\epsilon: C\to A$ denote the comultiplication and counit of the $A$-coring $C$. We only define $X_*(C)$ and the structure maps, leaving the straightforward verification of their properties to the reader.

Define $X_*(C)$ to be the $B$-bimodule
$$X_*(C) = X^\vee\tensor_A C\tensor_A X,$$
and define the comultiplication as the composite 
$$
\xymatrix{X^\vee \tensor_A C\tensor_A X \ar[rr]^-{1\tensor \Delta\tensor 1} && X^\vee \tensor_A C\tensor_A C\tensor_A X \ar[d]^-{1\tensor 1\tensor u \tensor 1\tensor 1} \\
&&\big(X^\vee \tensor_A C\tensor_A X\big)\tensor_B \big(X^\vee \tensor_A C\tensor_A X\big).}
$$
The counit is defined to be the composite
$$
\xymatrix{X^\vee \tensor_A C\tensor_A X \ar[rr]^-{1\tensor \epsilon\tensor 1} && X^\vee \tensor_A X \ar[r]^-{e} & B.}
$$
The universal braiding is defined to be
$$T_{C}^{\mathrm{univ}}=u\tensor 1 \colon C\tensor_A X \rightarrow X\tensor_B \big( X^\vee \tensor_AC\tensor_A X \big).$$

Finally, given a coring $(B',D)$ and a braided bimodule 
$$(X\tensor_B Y,T)\colon (A,C)\rightarrow (R,D),$$ 
we define the braided bimodule $(Y,S)$ by letting $S$ be the composite
$$
\xymatrix{X^\vee\tensor_A C\tensor_A X\tensor_B Y \ar[r]^-{1\tensor T} & X^\vee \tensor_A X\tensor_B Y\tensor_R D \ar[r]^-{e\tensor 1\tensor 1} & Y\tensor_R D.}
$$
In the special case of a $B$-coring $D$ and a braiding $T\colon C\tensor_A X \rightarrow X\tensor_B D$, the morphism $g_{T}$ is  the composite
$$
\xymatrix{X^\vee\tensor_A C\tensor_A X \ar[r]^-{1\tensor T} & X^\vee \tensor_AX\tensor_B D \ar[r]^-{e\tensor 1} & D.}
$$
\end{proof}

\begin{definition}\label{defn:canonical}
We will refer to the $B$-coring $X_*(C)$ introduced in Proposition \ref{prop:universal coring} as the \emph{canonical coring} associated to $X$ and $(A,C)$.

Furthermore, we define the \emph{canonical adjunction associated to $X$ and $C$} to be the adjunction governed by the universal braided bimodule $(X,T_C^{\mathrm{univ}})$,
\begin{equation} \label{eq:descent adjunction}
\hugeadjunction{\VV_A^C}{\VV_B^{X_*(C)}.}{(T_C^{\mathrm{univ}})_*}{(T_C^{\mathrm{univ}})^*}
\end{equation}
\end{definition}

\begin{remark}The canonical coring $X_*(C)$ generalizes the descent coring associated to a morphism of algebras $\varphi\colon A\rightarrow B$. Indeed, if  $C$ is the trivial $A$-coring $A$, and $X$ is  the bimodule ${}_A B_B$, then the canonical coring $B_*(A)$ is isomorphic to the descent coring  $\Desc(\varphi)$ associated to $\varphi$ \cite{mesablishvili}, \cite{hess:descent}, which has underlying $B$-bimodule $B\otimes_{A}B$. Moreover, the canonical adjunction is the adjunction
$$
\bigadjunction{\VV_A}{\VV_B^{\Desc(\vp)}}{\Can_\varphi}{\Prim_\varphi}
$$
familiar from descent theory.
Note that $B$ itself is an object in $\VV_B^{\Desc(\varphi)}$, where the right $\Desc(\vp)$-coaction is given by
  $$B\cong A\otimes_{A}B \xrightarrow {\vp\otimes 1} B\otimes_{A}B\cong B\otimes_{B}(B\otimes_{A}B).$$

\end{remark}

\begin{notation}\label{notn:can-prim}
Motivated by the remark above, we write henceforth
$$\Can_{X}=(T_{C}^{\mathrm{univ}})_{*}: \VV_A^C\to \VV_B^{X_*(C)}$$
and
$$\Prim_{X}=(T_{C}^{\mathrm{univ}})^{*}:\VV_B^{X_*(C)}\to \VV_A^C$$
for every universal braided bimodule $(X, T_{C}^{\mathrm{univ}})$.
\end{notation}

\begin{example}\label{ex:canonical-coringmap}
Let $(\vp,f )\colon (A,C) \to (B,D)$ be a morphism of corings with associated braided bimodule $(B, T_{\vp, f})$ (Example \ref{ex:braided bimodules 2}). A straightforward calculation shows that
$$g_{T_{\vp, f}}=f\colon B\tensor_A C\tensor_A B \to D.$$
\end{example}

\subsection{Dualizable braided bimodules}
In this section we analyze the notion of dualizability for braided bimodules. This turns out to require more than simply the dualizability of the underlying bimodule. We refer the reader to Appendix \ref{app:dualizable} for a brief overview of dualizability and adjunctions in bicategories. Throughout this section $\VV$ is a symmetric monoidal category.

Recall that $\CORING_\VV$ denotes the bicategory of corings and braided bimodules (see Definition \ref{def:coring bicat}). Within the general categorical framework of Appendix \ref{app:dualizable}, there is an appropriate definition of dual braided bimodules. 

\begin{definition} A braided bimodule $(X,T)\colon (A,C)\to (B,D)$ is \emph{left dual} to $(X^\vee,T^\vee)\colon (B,D)\to (A,C)$ if $(X,T)$ is left adjoint to $(X^\vee,T^\vee)$ in the bicategory $\CORING_{\VV}$ (cf.~Definition \ref{defn:dual}). 
\end{definition}
Let us make this more explicit.
\begin{proposition}
A braided bimodule $(X,T)\colon (A,C)\to (B,D)$ is left dual to $(X^\vee,T^\vee)\colon (B,D)\to (A,C)$ if and only if the underlying bimodules are strictly dual (cf.~Example \ref{ex:bimod}), via a coevaluation $u\colon A \to X\otimes _{B}X^\vee$ and an evaluation $e\colon X^\vee\otimes_{A}X\to B$, and the diagrams
\begin{equation} \label{eq:eta}
\xymatrix{
C\tensor_A A \ar[d]^-{1\tensor u} \ar[rr]^-\cong && A\tensor_A C \ar[d]^-{u\tensor 1} \\
C\tensor_AX\tensor_B X^\vee \ar[dr]_-{T\tensor 1} && X\tensor_B X^\vee \tensor_A C \\
& X\tensor_B D \tensor_B X^\vee \ar[ur]_-{1\tensor T^\vee}}
\end{equation}
\begin{equation} \label{eq:epsilon}
\xymatrix{ & X^\vee\tensor_AC\tensor_A X \ar[dr]^-{1\tensor T} \\
D\tensor_B X^\vee \tensor_A X \ar[d]^-{1\tensor e} \ar[ur]^-{T^\vee\tensor 1} && X^\vee\tensor_A X\tensor_B D \ar[d]^-{e\tensor 1} \\
D\tensor_B B \ar[rr]^-\cong && B\tensor_B D}
\end{equation}
commute.
\end{proposition}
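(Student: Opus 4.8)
The plan is to unwind the definition of an adjunction in a bicategory (Definition \ref{defn:dual}) in the concrete case of $\CORING_\VV$, whose composition of $1$- and $2$-morphisms is recorded in Definition \ref{def:coring bicat}. By definition, $(X,T)$ is left adjoint to $(X^\vee,T^\vee)$ exactly when there are two $2$-morphisms satisfying the triangle identities: a unit, whose target is the composite $1$-morphism $(A,C)\xrightarrow{(X,T)}(B,D)\xrightarrow{(X^\vee,T^\vee)}(A,C)$, and a counit, whose source is the composite $(B,D)\xrightarrow{(X^\vee,T^\vee)}(A,C)\xrightarrow{(X,T)}(B,D)$. Applying the composition formula of Definition \ref{def:coring bicat}, the first composite has underlying bimodule $X\tensor_B X^\vee$ and braiding $(1\tensor T^\vee)(T\tensor 1)$, while the second has underlying bimodule $X^\vee\tensor_A X$ and braiding $(1\tensor T)(T^\vee\tensor 1)$; the two identity $1$-morphisms are the trivial braided bimodules on $A$ and $B$. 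Thus a unit amounts to a morphism of $A$-bimodules $u\colon A\to X\tensor_B X^\vee$ that is braiding-compatible, and a counit to a morphism of $B$-bimodules $e\colon X^\vee\tensor_A X\to B$ that is braiding-compatible.

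Next I would read off the braiding-compatibility conditions from the defining square of a morphism of braided bimodules. For $u$, the source is the trivial braided bimodule $(A,\cong)$ and the target is $(X\tensor_B X^\vee,(1\tensor T^\vee)(T\tensor 1))$, so the compatibility square has top edge the canonical isomorphism $C\tensor_A A\cong A\tensor_A C$, left edge $1\tensor u$, right edge $u\tensor 1$, and bottom edge the composite braiding. Factoring the latter as $C\tensor_A X\tensor_B X^\vee\xrightarrow{T\tensor 1}X\tensor_B D\tensor_B X^\vee\xrightarrow{1\tensor T^\vee}X\tensor_B X^\vee\tensor_A C$ reproduces exactly the hexagon \eqref{eq:eta}. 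Symmetrically, the compatibility square for $e$, with the composite braiding $(1\tensor T)(T^\vee\tensor 1)$ factored through $X^\vee\tensor_A C\tensor_A X$, reproduces \eqref{eq:epsilon}. Hence giving braiding-compatible $u$ and $e$ is the same as giving $A$- and $B$-bimodule maps $u,e$ for which \eqref{eq:eta} and \eqref{eq:epsilon} commute.

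It remains to handle the triangle identities, and here I would invoke the forgetful bifunctor \eqref{eqn:coring-to-alg} $\CORING_\VV\to\ALG_\VV$. It is the identity on underlying bimodules and on underlying bimodule morphisms, and in particular it is locally faithful, since a $2$-morphism of $\CORING_\VV$ is merely a bimodule morphism subject to the \emph{property} of braiding-compatibility. Consequently the triangle identities, which are equations between vertical and horizontal composites of the $2$-cells built from $u$ and $e$, hold in $\CORING_\VV$ if and only if their images hold in $\ALG_\VV$. By Example \ref{ex:bimod}, those images are precisely the two triangle identities that exhibit $X$ and $X^\vee$ as strictly dual bimodules via the coevaluation $u$ and evaluation $e$. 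Combining the three observations yields the claimed equivalence: $(X,T)\dashv(X^\vee,T^\vee)$ in $\CORING_\VV$ if and only if the underlying bimodules are strictly dual via $u,e$ and diagrams \eqref{eq:eta} and \eqref{eq:epsilon} commute.

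I expect the main obstacle to be purely the bookkeeping of composition and braiding conventions, namely computing the two composite braidings on $X\tensor_B X^\vee$ and $X^\vee\tensor_A X$ with the correct orientation and matching their factorizations against the specific routes through $X\tensor_B D\tensor_B X^\vee$ and $X^\vee\tensor_A C\tensor_A X$ displayed in the hexagons. Once the directions of the unit and counit are pinned down, so that the trivial braided bimodules appear on the correct side and the forgetful bifunctor can be used to reduce the triangle identities to the strict-duality identities, the remaining verifications are entirely formal.
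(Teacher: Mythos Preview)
Your proposal is correct and follows exactly the paper's approach: the paper's proof simply states that one unwinds the definition of an adjunction in a bicategory, and that the commutativity of \eqref{eq:eta} and \eqref{eq:epsilon} corresponds precisely to the requirement that $u$ and $e$ be morphisms of braided bimodules. Your write-up makes this explicit, and your observation that the forgetful bifunctor $\CORING_\VV\to\ALG_\VV$ is locally faithful (so the triangle identities reduce to those in $\ALG_\VV$) is a helpful addition that the paper leaves implicit.
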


\begin{proof}
The proof simply amounts to unwinding the definition of an adjunction in a bicategory. The commutativity of the diagrams \eqref{eq:eta} and \eqref{eq:epsilon} corresponds exactly to requirement that the coevaluation $u\colon A\rightarrow X\tensor_B X^\vee$ and evaluation $e\colon X^\vee\tensor_A X\rightarrow B$ be morphisms of braided bimodules.
\end{proof}

\begin{remark} Let $\VV$ be a closed, symmetric monoidal category. Let $\VCAT$ denote the bicategory (in fact, $2$-category) of $\VV$-categories, $\VV$-functors, and $\VV$-natural transformations. There is a bifunctor
$$\VV\colon \CORING_\VV\rightarrow \VCAT$$
that sends a coring $(A,C)$ to the $\VV$-category $\VV_A^C$, a braided bimodule $(X,T)$ to the associated $\VV$-functor $T_{*}\colon \VV_A^C\rightarrow \VV_B^D$, and a morphism $f\colon (X,T)\rightarrow (X',T')$ of braided bimodules to the obvious natural transformation that sends an object $(M,\delta)$ in $\VV_{A}^{C}$ to the morphism
$$1\otimes f \colon M\otimes_{A}X \to M\otimes_{A'}X',$$
where $(X,T), (X',T') \colon (A,C)\to (B,D)$.  Since bifunctors preserve adjunctions, it follows that if $(X,T)\colon (A,C) \to (B,D)$ is a dualizable braided bimodule with dual $(X^{\vee},T^{\vee}) \colon (B,D) \to (A,C)$, then 
$$\bigadjunction{\VV_A^C}{\VV_B^D}{T_{*}}{(T^{\vee})_{*}}$$
is a $\VV$-adjunction, whence there exists a natural isomorphism
$$(T^{\vee})_{*}\cong T^{*}\colon \VV_B^D\to \VV_A^C,$$
by uniqueness up to isomorphism of right adjoints.
\end{remark}

\begin{remark}
The bimodule ${}_AB_B$ arising from a morphism of algebras $\varphi\colon A\rightarrow B$ is dualizable. Unfortunately, it is not true in general that the braided bimodule associated to a morphism of corings $(\varphi,f)\colon (A,C)\rightarrow (B,D)$ is dualizable. Indeed, let $\varphi\colon A\rightarrow B$ be a morphism of algebras, and consider the induced morphism of corings $(A,A)\rightarrow (B,B\tensor_A B)$. The induced adjunction is the descent adjunction
$$\bigadjunction{\VV_A}{\VV_B^{B\tensor_A B}}{\Can_\varphi}{\Prim_\varphi}.$$
The braided bimodule that governs this adjunction is $X = {}_AB_B$, where $B$ is viewed as a left $A$-module via the morphism $\varphi$. The braiding is given by the canonical right $B\tensor_A B$-comodule structure on $B$ (cf. Remark \ref{rem:C=A}), $T\colon A\tensor_A B \rightarrow B\tensor_B (B\tensor_A B)$. The underlying bimodule is dualizable with right dual $X = {}_BB_A$. However, the braided bimodule $(B,T)$ is dualizable if and only if $\Prim_\varphi$ is isomorphic to the forgetful functor, which rarely happens.
\end{remark}

\section{Homotopical Morita theory for corings} \label{sec:m-t}
In this section we elaborate a homotopical version of Morita-Takeuchi theory for corings (cf.~\cite[\S23]{brzezinski-wisbauer}), providing conditions under which two $\VV$-model categories of comodules over corings are Quillen equivalent. In particular, we provide criteria in terms of homotopic descent under which a morphism of corings induces a Quillen equivalence between the associated comodule categories.

\begin{hypothesis} \label{hypo:m-t}
Throughout this section, $\VV$ denotes a symmetric monoidal model category (see Appendix \ref{app:enriched}). Moreover, we assume that for every coring $(A,C)$, the category $\VV_A^C$ admits the model structure left-induced from a model structure $\VV_{A}$, via the forgetful adjunction
\begin{equation} \label{eq:adjunction}
\bigadjunction{\VV_A^C}{\VV_A}{\UU_A}{-\tensor_A C}.
\end{equation}
In other words, a morphism in $\VV_A^C$ is a weak equivalence (cofibration) if and only if it is a weak equivalence (cofibration) when regarded as a morphism in $\VV_A$.
\end{hypothesis}

Currently, this hypothesis is known to hold in the following examples.
\begin{itemize}
\item For $\VV=\mathsf{sSet}$, the category of simplicial sets, where the monoidal product is the cartesian product, and for every simplicial monoid $A$,  \cite[Theorem 2.2.3]{hkrs} implies that $\mathsf{sSet}_{A}$ admits an injective model structure, for which the weak equivalences are the weak homotopy equivalences, and the cofibrations are the injections.  By an argument analogous to that in \cite[Theorem 5.0.3]{hkrs}, the left-induced model structure on $(\mathsf{sSet})_{A}^{C}$ exists for all $A$-corings $C$.
\item For $\VV=\mathsf{sSet}_{*}$, the category of pointed simplicial sets, where the monoidal product is the smash product, and for every simplicial monoid $A$,  we again apply \cite[Theorem 2.2.3]{hkrs} to conclude that $(\mathsf{sSet}_{*})_{A}$ admits an injective model structure, for which the weak equivalences are the based weak homotopy equivalences, and the cofibrations are the based injections.  Again by an argument analogous to that in \cite[Theorem 5.0.3]{hkrs}, the left-induced model structure on $(\mathsf{sSet}_{*})_{A}^{C}$ exists for all $A$-corings $C$.
\item For $\VV = \Sp^\Sigma$, the category of symmetric spectra, and for every symmetric ring spectrum $A$, where $\Sp_A^\Sigma$  has the injective model structure, for which the weak equivalences are the stable equivalences and the cofibrations are the injections, the left-induced model structure on $(\Sp^\Sigma)_{A}^{C}$ exists for all $A$-corings $C$  (see \cite[Theorem 5.0.3]{hkrs}).

\item For $\VV = \Ch_R$, the category of chain complexes over a commutative ring $R$, and a dg algebra $A$, we can give $(\Ch_R)_A$ the injective model structure, where the weak equivalences are the quasi-isomorphisms and the cofibrations are the levelwise injections (see \cite[Theorem 6.6.3]{hkrs}).

\item For $\VV= \Ch_R$, we could alternatively give $(\Ch_R)_A$ the $r$-module structure, where the weak equivalences are the $R$-chain homotopy equivalences (see \cite[Theorem 6.6.3]{hkrs})
\end{itemize}

Presumably, more examples can be added to this list.

\begin{remark}
In all examples we consider in this paper, the model structure on $\VV_A$ satisfies Hypothesis \ref{hypo:module}, i.e., the weak equivalences in $\VV_A$, and hence in $\VV_A^C$, are created in $\VV$ via the forgetful functor. However, in the discussion to follow, it is not necessary that the weak equivalences in $\VV_A$ are created in $\VV$ --- any model structure on $\VV_A$ will do.
\end{remark}


\begin{remark}
Under the hypothesis, $\VV$, $\VV_{A}$, and $\VV_A^C$ are model categories, so they are in particular complete and cocomplete and thus admit all reflexive coequalizers and coreflexive equalizers.
\end{remark}

\begin{remark}
By definition of the $\VV$-tensor structure on $\VV_A^C$ (see Proposition \ref{prop:comodule v-category}), the left adjoint $\UU_A$ is a tensor functor, so it follows from Proposition \ref{prop:V-adjunction} that the adjunction \eqref{eq:adjunction} is $\VV$-structured. By Proposition \ref{prop:v-induced}, it follows that the model structure on $\VV_A^C$ is $\VV$-structured, whenever the model structure on $\VV_A$ is.
\end{remark}

\subsection{Towards Quillen equivalences of comodule categories}

We begin our study of the homotopy theory of comodules over corings by providing conditions under which an adjunction governed by a braided bimodule is a Quillen adjunction.

\begin{proposition} \label{prop:Quillen-comod} Let $\VV$ be a symmetric monoidal model category satisfying Hypothesis \ref{hypo:m-t}.
Let $(X,T)\colon (A,C) \to (B,D)$ be a braided bimodule. If
$$\bigadjunction{\VV_A}{\VV_B}{-\tensor_A X}{\Map_B(X,-)}$$
is a Quillen adjunction, then so is
$$\bigadjunction{\VV_A^C}{\VV_B^D}{T_{*}}{T^{*}}.$$
The converse holds if $(A,C)$ is coaugmented.
\end{proposition}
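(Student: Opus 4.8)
The plan is to derive both directions from two ingredients that are already in place: the commuting square of left adjoints
$$\UU_B\circ T_{*}\;\cong\;(-\tensor_A X)\circ \UU_A$$
supplied by Proposition \ref{prop:bbadj}, and the fact that, under Hypothesis \ref{hypo:m-t}, the model structures on $\VV_A^C$ and $\VV_B^D$ are left-induced. The latter means exactly that the forgetful functors $\UU_A$ and $\UU_B$ each \emph{create} cofibrations and weak equivalences; in particular each one preserves and reflects cofibrations, weak equivalences, and hence trivial cofibrations. Since an adjunction is Quillen precisely when its left adjoint preserves cofibrations and trivial cofibrations, both implications will reduce to chasing a (trivial) cofibration around the square above.

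For the forward direction, I would suppose that $-\tensor_A X$ is left Quillen and take a (trivial) cofibration $f$ in $\VV_A^C$. Then $\UU_A(f)$ is a (trivial) cofibration in $\VV_A$ because $\UU_A$ preserves them, so $\UU_A(f)\tensor_A X$ is a (trivial) cofibration in $\VV_B$ by hypothesis. By the commuting square this morphism equals $\UU_B T_{*}(f)$, and since $\UU_B$ reflects (trivial) cofibrations, $T_{*}(f)$ is a (trivial) cofibration in $\VV_B^D$. Hence $T_{*}$ is left Quillen. Note that coaugmentation plays no role in this half.

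The converse is where coaugmentation enters, and its function is to remove the obstruction that a general $A$-module need not underlie any $C$-comodule, so that an arbitrary (trivial) cofibration of $\VV_A$ cannot be viewed directly as a morphism in $\VV_A^C$. The remedy is a \emph{trivial comodule} functor: given a coaugmentation $\eta\colon A\to C$, I would equip any right $A$-module $M$ with the coaction
$$M\xrightarrow{\ \cong\ } M\tensor_A A \xrightarrow{\ 1\tensor \eta\ } M\tensor_A C,$$
verify (using the coring-map identities $\epsilon_C\eta=1_A$ and the comultiplicativity of $\eta$) that it is coassociative, counital, and natural in $M$, and thereby obtain a functor $\mathrm{triv}\colon \VV_A\to \VV_A^C$ with $\UU_A\circ\mathrm{triv}=\mathrm{id}_{\VV_A}$; for $M=A$ this recovers the $C$-comodule structure on $A$ noted in Remark \ref{rmk:interpret}. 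Since $\UU_A$ reflects (trivial) cofibrations and $\UU_A\,\mathrm{triv}(g)=g$, the functor $\mathrm{triv}$ carries a (trivial) cofibration $g$ of $\VV_A$ to one in $\VV_A^C$. Assuming now that $T_{*}$ is left Quillen, $T_{*}\mathrm{triv}(g)$ is a (trivial) cofibration in $\VV_B^D$, so $\UU_B T_{*}\mathrm{triv}(g)$ is one in $\VV_B$ because $\UU_B$ preserves them; by the commuting square this morphism is $(\UU_A\mathrm{triv}(g))\tensor_A X=g\tensor_A X$. Therefore $-\tensor_A X$ preserves (trivial) cofibrations and is left Quillen.

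The only genuine work is checking that the trivial comodule construction is a well-defined functorial section of $\UU_A$ — that is, the coassociativity and counitality of the coaction built from $\eta$ — and this is the single point at which the coaugmentation hypothesis is essential; its compatibility with the model structures is then automatic from left-inducedness. Everything else is formal bookkeeping with the left-induced model structures and the commuting square of left adjoints, so I expect no further obstacle.
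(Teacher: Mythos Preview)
Your proposal is correct and follows the same approach as the paper, which merely states that the result ``follows readily from the fact that the model structure on $\VV_A^C$ is left induced via the forgetful functor $\UU_A$.'' You have simply made explicit what the paper leaves implicit, in particular the role of the coaugmentation in the converse direction via the trivial comodule section $\mathrm{triv}\colon \VV_A\to \VV_A^C$ of $\UU_A$; this is the natural way to unpack the one-line proof.
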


\begin{proof}
This follows readily from the fact that the model structure on $\VV_A^C$ is left induced via the forgetful functor $\UU_A\colon \VV_A^C\to \VV_A$.
\end{proof}

The next result, which is a sort of dual to Proposition \ref{prop:resext}, is a first step towards understanding Quillen equivalences of comodule categories. 

\begin{proposition}\label{prop:we-QE} Let $\VV$ be a symmetric monoidal model category satisfying Hypothesis \ref{hypo:m-t}.
Let $f\colon C\to D$ be a morphism of $A$-corings. The change-of-corings adjunction,
$$
\bigadjunction{\VV_A^C}{\VV_A^D,}{f_*}{f^{*}}
$$
is a Quillen equivalence if and only if the counit of the adjunction,
$$\epsilon_{M}:f_{*}f^{*}(M) \to M,$$
is a weak equivalence for all fibrant right $D$-comodules $M$.
If $A$ is fibrant in $\VV_A$, and the change-of-corings adjunction is a Quillen equivalence, then $f$ is a weak equivalence.
\end{proposition}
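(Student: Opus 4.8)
The plan is to deduce the final statement from the characterization established in the first part of the proposition, by evaluating the counit at a single, carefully chosen fibrant comodule. Since we are assuming that the change-of-corings adjunction $(f_*,f^*)$ is a Quillen equivalence, the ``only if'' direction of the equivalence just proved guarantees that the counit $\epsilon_M\colon f_*f^*(M)\to M$ is a weak equivalence for \emph{every} fibrant right $D$-comodule $M$. It therefore suffices to exhibit one fibrant $M$ for which $\epsilon_M$ can be identified with $f$ itself.

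The natural candidate is $M=D$, viewed as the regular right $D$-comodule via $\Delta_D$. The key observation is that $D\cong A\tensor_A D$ is precisely the cofree $D$-comodule on the right $A$-module $A$. Because the model structure on $\VV_A^D$ is left-induced along the forgetful functor $\UU_A$, the functor $\UU_A\colon \VV_A^D\to \VV_A$ is a left Quillen functor, and hence its right adjoint $-\tensor_A D\colon \VV_A\to \VV_A^D$ is a right Quillen functor and so preserves fibrant objects. Since $A$ is fibrant in $\VV_A$ by hypothesis, it follows that $D\cong A\tensor_A D$ is fibrant in $\VV_A^D$, so the counit $\epsilon_D$ is a weak equivalence.

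It remains to identify $\epsilon_D$ with $f$. This is recorded in Example \ref{ex:braided bimodules 3}, where it is noted that the $D$-component of the counit of the $f_*\dashv f^*$ adjunction is the morphism $f$: concretely, $f^*=-\cotensor_D C$ gives $f^*(D)=D\cotensor_D C\cong C$, whence $f_*f^*(D)\cong f_*(C)=\big(C,(1\tensor f)\Delta_C\big)$, and the counit $\epsilon_D\colon f_*(C)\to D$ is given on underlying objects by $f\colon C\to D$ (which is indeed a map of $D$-comodules because $f$ is a morphism of corings). Combining the previous step, $\epsilon_D=f$ is a weak equivalence in $\VV_A^D$; by Hypothesis \ref{hypo:m-t} a weak equivalence in $\VV_A^D$ is the same as a weak equivalence in $\VV_A$, which yields the desired conclusion that $f$ is a weak equivalence.

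The argument is short, and no genuine obstacle arises: the essential insight is to test the counit against the single comodule $D$ rather than against a general fibrant object. The only points requiring a little care are the two identifications above, namely that the regular comodule $D$ is cofree on $A$ (so that fibrancy of $A$ forces fibrancy of $D$) and that the counit at $D$ is literally $f$; both follow directly from the descriptions of the change-of-corings adjunction given earlier in the excerpt.
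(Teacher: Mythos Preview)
Your argument for the final sentence is correct and essentially identical to the paper's: both observe that $D\cong A\tensor_A D$ is fibrant in $\VV_A^D$ because $-\tensor_A D$ is right Quillen and $A$ is fibrant, and both invoke Example~\ref{ex:braided bimodules 3} to identify $\epsilon_D$ with $f$.

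Note, however, that you only address the last sentence and take the ``if and only if'' characterization as already established. The paper's proof of that first part is brief but worth recording: since the model structures on $\VV_A^C$ and $\VV_A^D$ are left-induced from $\VV_A$, and since $f_*$ does not change the underlying $A$-module, the functor $f_*$ preserves and reflects all weak equivalences (not just those between cofibrant objects). One then appeals to the standard criterion (e.g.\ \cite[Corollary~1.3.16]{hovey-book}) that a Quillen adjunction whose left adjoint reflects weak equivalences is a Quillen equivalence if and only if the counit is a weak equivalence on all fibrant objects. Your phrase ``the equivalence just proved'' suggests you had this in mind, but it should be stated explicitly. Also, your aside that $f^* = -\cotensor_D C$ requires the flatness hypothesis on $(A,C)$ (Proposition~\ref{prop:R_X dualizable}), which is not assumed here; it is safer simply to cite Example~\ref{ex:braided bimodules 3} directly, as the paper does.
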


\begin{remark}
If the coring $(A,C)$ is flat, then $f^{*}(M)=M\cotensor_{D}C$ by Proposition \ref{prop:R_X dualizable}. In this case, if $f$ is a weak equivalence, and the functor $M\cotensor_{D}-: {}_{A}^{D}\VV \to \VV$ preserves weak equivalences for all fibrant right $D$-comodules $M$, then the adjunction above is a Quillen equivalence.
\end{remark}

\begin{proof}
The functor $f_*$ preserves and reflects weak equivalences, because these are created in the underlying category $\VV_{A}$, and $f_*$ does not change the underlying $A$-module. It follows that the adjunction is a Quillen equivalence if and only if the counit of the adjunction
is a weak equivalence for all fibrant objects $M$ in $\VV_A^D$. 

If $A$ is fibrant in $\VV_{A}$, it follows that $D$ is fibrant as an object of $\VV_A^D$ because it is the image of $A$ under the right Quillen functor $-\tensor_A D\colon \VV_A \rightarrow \VV_A^D$. Thus,  if the counit $\epsilon_M$ is a weak equivalence for all fibrant $M$, then $f$ is necessarily a weak equivalence, because the component of the counit at $D$ is exactly $f$ (cf.~Example \ref{ex:braided bimodules 3}). 
\end{proof}

Since the condition on $f$ in Proposition \ref{prop:we-QE} recurs throughout the rest of this article, we give it a name, dual to that for the condition that arose in the module case (Proposition \ref{prop:resext}). In Section \ref{sec:examples} we provide concrete examples of chain maps satisfying this condition.

\begin{definition} \label{defn:copure}
Let $\VV$ be a symmetric monoidal model category satisfying Hypothesis \ref{hypo:m-t}. We say that a weak equivalence $f\colon C\to D$ of $A$-corings is \emph{copure} if
$$\epsilon_{M}:f_{*}f^{*}(M) \to M$$ 
is a weak equivalence for all fibrant right $D$-comodules $M$.
\end{definition}

\subsection{Homotopic descent over corings}
Our description of the canonical coring associated to a dualizable $A$-$B$-bimodule (Definition \ref{defn:canonical}) hints at an interesting generalization of the usual notion of homotopic descent \cite{hess:descent}, which turns out to be important for our discussion of Quillen equivalences of comodule categories. Recall Notation \ref{notn:can-prim}.

\begin{definition} \label{defn:homotopic descent}
Let $(A,C)$ be a coring in $\VV$ and $B$ an algebra in $\VV$. A strictly dualizable $A$-$B$-bimodule $X$ \emph{satisfies effective homotopic descent with respect to $C$} if the canonical adjunction
\begin{equation}\label{eqn:canonical-adj}
\bigadjunction{\VV_A^C}{\VV_B^{X_*(C)},}{\Can_{X}}{\Prim_{X}}
\end{equation}
is a Quillen equivalence.
\end{definition}

\begin{remark}\label{rmk:eff-htpic-desc}  
This extends the definition of effective homotopic Grothendieck descent \cite{hess:descent}. Indeed, a morphism of algebras $\varphi\colon A\to B$ satisfies effective homotopic descent, in the sense of \cite[\S6]{hess:descent}, if and only if the bimodule $X= {}_AB_B$ satisfies effective homotopic descent with respect to the trivial coring $A$, in the sense of Definition \ref{defn:homotopic descent}.
\end{remark}

When specialized to $\VV = \Ab$ (the category of abelian groups with weak equivalences = isomomorphisms) the following theorem recovers Grothendieck's classical theorem on faithfully flat descent for morphism of commutative rings. Recall the definitions of the special classes of modules in Definition \ref{defn:special-modules} and the definition of a flat coring from Definition \ref{defn:flat-coring}.

\begin{theorem} \label{thm:homotopic descent}
Let $\VV$ be a symmetric monoidal model category satisfying Hypothesis \ref{hypo:m-t}. Let $(A,C)$ be a flat coring, let $B$ be an algebra, and let ${}_AX_B$ be a right dualizable bimodule. If $X$ is homotopy faithfully flat as a left $A$-module, then $X$ satisfies effective homotopic descent with respect to $(A,C)$.
\end{theorem}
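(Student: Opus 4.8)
The plan is to verify directly that the canonical adjunction
$$\bigadjunction{\VV_A^C}{\VV_B^{X_*(C)}}{\Can_X}{\Prim_X}$$
is a Quillen equivalence, by showing that \emph{both} the derived counit and the derived unit are weak equivalences; by Definition \ref{defn:homotopic descent} this is exactly the assertion that $X$ satisfies effective homotopic descent with respect to $(A,C)$. First I would record that this is a Quillen adjunction: by Proposition \ref{prop:Quillen-comod} it suffices that the underlying module adjunction $-\tensor_A X\dashv \Map_B(X,-)$ be a Quillen adjunction, which holds once $-\tensor_A X$ is a left Quillen functor (e.g.\ because $X$ is cofibrant as a right $B$-module, via Proposition \ref{prop:x}). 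Throughout I would exploit Hypothesis \ref{hypo:m-t}: weak equivalences in $\VV_A^C$ and $\VV_B^{X_*(C)}$ are created by the forgetful functors to $\VV_A$ and $\VV_B$, and on underlying modules $\Can_X$ is simply $-\tensor_A X$.

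The heart of the argument is that \emph{strong homotopy flatness alone} forces the derived counit to be a weak equivalence, with faithfulness playing no role here. Fix a fibrant $N\in\VV_B^{X_*(C)}$. Since $N$ is fibrant, $\Prim_X(N)$ already computes $\mathbb{R}\Prim_X(N)$, and by Proposition \ref{prop:R_X dualizable} (using that $(A,C)$ is flat, so $\UU_A$ creates the defining equalizer) it equals the cotensor product $N\cotensor_{X_*(C)}(X^\vee\tensor_A C)$, an equalizer over the parallel-pair category of a diagram landing in $\VV_A$. I would then apply $\Can_X=-\tensor_A X$ to this equalizer: strong homotopy flatness of ${}_AX$ guarantees that the canonical map
$$\big(N\cotensor_{X_*(C)}(X^\vee\tensor_A C)\big)\tensor_A X \longrightarrow N\cotensor_{X_*(C)}\big((X^\vee\tensor_A C)\tensor_A X\big)$$
is a weak equivalence, the equalizer being a finite limit. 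Since $(X^\vee\tensor_A C)\tensor_A X = X_*(C)$, the target is $N\cotensor_{X_*(C)}X_*(C)\cong N$, and a diagram chase using the explicit counit formula from the remark following Proposition \ref{prop:bbadj} identifies this weak equivalence with the underlying map of the counit $\epsilon_N$. Precomposing with the weak equivalence $\Can_X(Q\Prim_X N)\to\Can_X\Prim_X(N)$ — a weak equivalence because ${}_AX$ is homotopy flat and $Q\Prim_X N\to\Prim_X N$ is one — exhibits the derived counit as a weak equivalence.

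To pass from the counit to the unit I would use that $\Can_X$ \emph{reflects} weak equivalences: on underlying modules it is $-\tensor_A X$, which reflects weak equivalences by homotopy faithfulness of ${}_AX$, and weak equivalences in the comodule categories are created by the forgetful functors. For cofibrant $M\in\VV_A^C$, the triangle identity in $\Ho(\VV_B^{X_*(C)})$ reads $\widetilde\epsilon_{\Can_X M}\circ \Can_X(\widetilde\eta_M)=\mathrm{id}$; since the derived counit $\widetilde\epsilon$ is a natural isomorphism by the previous step, $\Can_X(\widetilde\eta_M)$ is a weak equivalence, hence so is $\widetilde\eta_M$ because $\Can_X$ reflects weak equivalences. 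With both derived unit and derived counit weak equivalences, the canonical adjunction is a Quillen equivalence, as required. This cleanly mirrors the classical Beck picture, where flatness yields fully-faithfulness of the right adjoint and faithfulness upgrades this to an equivalence.

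I expect the main obstacle to lie in the second paragraph, namely making precise that $\Can_X$ commutes, up to weak equivalence, with the equalizer defining $\Prim_X$, and that the comparison map is genuinely the counit. This requires (i) checking that the cotensor product is a finite limit of a diagram landing in $\VV_A$, so that strong homotopy flatness applies verbatim; (ii) verifying that the strict identification $(X^\vee\tensor_A C)\tensor_A X\cong X_*(C)$ carries the two parallel maps of the cotensor to the coaction maps of $N\cotensor_{X_*(C)}X_*(C)$, compatibly with the left $X_*(C)$-comodule structure on $X^\vee\tensor_A C$; and (iii) a careful diagram chase identifying the resulting comparison weak equivalence with $\epsilon_N$. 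The fibrancy and flatness bookkeeping needed to ensure that all strict functors compute their derived counterparts is routine but must be tracked.
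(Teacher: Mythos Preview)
Your proposal is correct and follows essentially the same route as the paper. The paper packages your unit argument into a single invocation of the standard criterion that a Quillen adjunction whose left adjoint preserves and reflects all weak equivalences is a Quillen equivalence if and only if the counit is a weak equivalence at every fibrant object; your triangle-identity-plus-reflection step is precisely the content of that criterion, and the counit analysis via Proposition \ref{prop:R_X dualizable} and strong homotopy flatness is identical in both.
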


\begin{proof}
The underlying right $B$-module of $\Can_X(M)$ is simply $M\otimes_{A}X$. Since $X$ is homotopy faithfully flat as a left $A$-module, and since weak equivalences are detected in $\VV_B$, it follows that the functor $\Can_X \colon \VV_A^C \rightarrow \VV_B^{X_*(C)}$ preserves and reflects weak equivalences. Consequently, \eqref{eqn:canonical-adj} is a Quillen equivalence if and only if the counit of the adjunction is a weak equivalence for every fibrant object $M$. 

Since $(A,C)$ is flat, and $X$ is right dualizable, the right adjoint in \eqref{eqn:canonical-adj} may be expressed as a cotensor product (Proposition \ref{prop:R_X dualizable}). It follows that the counit of the adjunction may be identified with the map
$$\big(M\cotensor_{X_*(C)} (X^\vee\tensor_A C)\big) \tensor_A X \longrightarrow M \cotensor_{X_*(C)} X_*(C) \cong M$$
induced by the universal property of the cotensor product. Since $-\tensor_A X$  preserves finite limits up to homotopy, it follows that the counit is a weak equivalence.
\end{proof}

The corollary below is an important special case of Theorem \ref{thm:homotopic descent}.

\begin{corollary}
Let $\varphi\colon A\rightarrow B$ be a morphism of algebras in $\VV$. If $B$ is homotopy faithfully flat as a left $A$-module, then $\varphi$ satisfies effective homotopic descent.
\end{corollary}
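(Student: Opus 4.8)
The plan is to deduce this directly from Theorem \ref{thm:homotopic descent} by recognizing the morphism $\varphi$ as a special case of the bimodule setup. By Remark \ref{rmk:eff-htpic-desc}, the assertion that $\varphi\colon A\to B$ satisfies effective homotopic descent is, by definition, the assertion that the bimodule $X={}_AB_B$ (with left $A$-action through $\varphi$) satisfies effective homotopic descent with respect to the trivial coring $(A,A)$, in the sense of Definition \ref{defn:homotopic descent}. It therefore suffices to verify that the hypotheses of Theorem \ref{thm:homotopic descent} are met for the coring $(A,A)$ and the bimodule $X={}_AB_B$.

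First I would check that the trivial coring $(A,A)$ is flat. Since $-\tensor_A A\colon \VV_A\to \VV$ is naturally isomorphic to the forgetful functor $\UU\colon \VV_A\to \VV$, which preserves all limits, it preserves coreflexive equalizers; hence $A$ is flat as a left $A$-module and $(A,A)$ is a flat coring in the sense of Definition \ref{defn:flat-coring}. Next, the bimodule ${}_AB_B$ arising from a morphism of algebras is right dualizable, with strict right dual ${}_BB_A$ (where $A$ acts through $\varphi$), the coevaluation and evaluation being induced by $\varphi$ and the multiplication of $B$; this is recorded in the discussion following Proposition \ref{prop:R_X dualizable}. Finally, the remaining hypothesis --- that $X={}_AB_B$ is homotopy faithfully flat as a left $A$-module --- is precisely the assumption of the corollary.

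With all hypotheses verified, Theorem \ref{thm:homotopic descent} yields that $X={}_AB_B$ satisfies effective homotopic descent with respect to $(A,A)$, which by Remark \ref{rmk:eff-htpic-desc} is exactly the statement that $\varphi$ satisfies effective homotopic descent. I do not expect any genuine obstacle: the entire content lies in Theorem \ref{thm:homotopic descent}, and the only steps requiring attention are the routine observations that the trivial coring is flat and that ${}_AB_B$ is right dualizable, both immediate from earlier results. The one point to be slightly careful about is the identification in Remark \ref{rmk:eff-htpic-desc} of the canonical coring $B_*(A)$ with the descent coring $\Desc(\varphi)=B\tensor_A B$, so that the canonical adjunction $(\Can_X,\Prim_X)$ of Definition \ref{defn:canonical} coincides with the descent adjunction $(\Can_\varphi,\Prim_\varphi)$; but this identification was already established in the remark following Definition \ref{defn:canonical}.
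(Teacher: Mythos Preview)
Your proposal is correct and follows essentially the same approach as the paper: the corollary is stated as an immediate special case of Theorem \ref{thm:homotopic descent}, and you have simply written out the routine verifications (flatness of the trivial coring, right dualizability of ${}_AB_B$, and the identification via Remark \ref{rmk:eff-htpic-desc}) that the paper leaves implicit.
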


\begin{remark} \label{rem:classical}
The classical theorem is recovered by taking $\VV$ to be the category of abelian groups, with isomorphisms as weak equivalences, $C$ to be the trivial $A$-coring $A$, and $X$ to be the bimodule ${}_AB_B$, where $B$ is viewed as a left $A$-module via the morphism of algebras $\varphi\colon A\rightarrow B$. Note that in this setting a module is homotopy faithfully flat if and only if it is faithfully flat in the ordinary ring theoretic sense.
\end{remark}

\begin{remark}
Homotopical faithful flatness is not necessary for homotopic effective descent. In fact, the analogous condition is already not necessary for ordinary effective descent for commutative rings. A morphism of commutative rings satisfies effective descent if and only it is \emph{pure}, see e.g.~\cite[Exercise 4.8.12]{borceux} or \cite{janelidze-tholen}. Pure morphisms are necessarily faithful, but not necessarily flat; for example, $\ZZ\to \ZZ\times \ZZ/n\ZZ$, $m\mapsto (m,[m])$ is a pure but non-flat homomorphism of $\ZZ$-modules.
\end{remark}

\subsection{The homotopical Morita-Takeuchi theorem}
Assembling  our results on Quillen equivalences induced by copure coring maps (Proposition \ref{prop:we-QE}) and on effective homotopic descent over corings (Theorem \ref{thm:homotopic descent}), we can now approach the question formulated in the introduction and provide conditions under which two categories of comodules over corings are Quillen equivalent. Recall that if ${}_{A}X_{B}$ is strictly right dualizable, then every braided module $(X,T) \colon (A,C) \to (B,D)$ determines a morphism of corings $g_{T}\colon \big(B, X_{*}(C)\big) \to (B,D)$ (see Proposition \ref{prop:universal coring}).

\begin{theorem} \label{thm:bb}
Assume Hypothesis \ref{hypo:m-t}.

Let $(X,T)\colon (A,C)\to (B,D)$ be a braided bimodule such that $-\tensor_A X\colon \VV_A \to \VV_B$ is a left Quillen functor and $X_B$ is strictly dualizable. Let $g_{T}\colon X_*(C) \to D$ denote the associated morphism of $B$-corings.

If $X$ satisfies effective homotopic descent with respect to $C$, and $g_{T}\colon X_*(C) \to D$ is a copure weak equivalence of corings, then the Quillen adjunction governed by $(X,T)$,
\begin{equation} \label{eq:coring-adjunction}
\bigadjunction{\VV_A^C}{\VV_B^D}{T_{*}}{T^{*}},
\end{equation}
is a Quillen equivalence.

Conversely, if $B$ is fibrant in $\VV_B$, the coring $(A,C)$ is flat, and ${}_AX$ is strongly homotopy flat, then \eqref{eq:coring-adjunction} is a Quillen equivalence only if $X$ satisfies effective homotopic descent with respect to $C$, and $g_{T}\colon X_*(C) \to D$ is a copure weak equivalence of corings.
\end{theorem}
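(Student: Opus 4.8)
The key observation is that the universal property of the canonical coring (Proposition \ref{prop:universal coring}) lets us factor the braided bimodule $(X,T)$ through $(X,T_C^{\mathrm{univ}})$ followed by the morphism of corings $g_T\colon X_*(C)\to D$. Concretely, I would factor the induced adjunction $T_*\dashv T^*$ as a composite
\begin{equation*}
\xymatrix{\VV_A^C \ar@<0.5ex>[rr]^-{\Can_X} && \VV_B^{X_*(C)} \ar@<0.5ex>[rr]^-{(g_T)_*} \ar@<0.5ex>[ll] && \VV_B^D, \ar@<0.5ex>[ll]}
\end{equation*}
where the first adjunction is the canonical adjunction of Definition \ref{defn:canonical} and the second is the change-of-corings adjunction associated to $g_T$ (Example \ref{ex:braided bimodules 3}). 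One must first verify that $T_*\cong (g_T)_*\circ \Can_X$ as left adjoints; this is essentially the content of the commuting triangle in Proposition \ref{prop:universal coring}, since applying the bifunctor $\VV\colon \CORING_\VV\to \VCAT$ to that triangle yields the desired factorization of functors, hence of adjunctions.

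For the forward implication, I would use that effective homotopic descent says precisely that $\Can_X\dashv \Prim_X$ is a Quillen equivalence (Definition \ref{defn:homotopic descent}), while the hypothesis that $g_T$ is a copure weak equivalence says, by Proposition \ref{prop:we-QE} together with Definition \ref{defn:copure}, that $(g_T)_*\dashv (g_T)^*$ is a Quillen equivalence. A composite of Quillen equivalences is a Quillen equivalence, so $T_*\dashv T^*$ is one as well. The only care needed here is that both factor adjunctions genuinely are Quillen adjunctions, which follows from Proposition \ref{prop:Quillen-comod} applied to the fact that $-\tensor_A X$ is left Quillen (for $\Can_X$) and from the general fact that change-of-corings along any $A$-coring morphism is left Quillen under Hypothesis \ref{hypo:m-t}.

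For the converse, I would argue that a Quillen equivalence $T_*\dashv T^*$ forces each factor to be a Quillen equivalence, using the extra hypotheses ($B$ fibrant, $(A,C)$ flat, ${}_AX$ strongly homotopy flat). The strategy is to show that $\Can_X$ is already a Quillen equivalence, and then deduce that $(g_T)_*$ must be one too by a two-out-of-three argument for Quillen equivalences in the composite. To see that $\Can_X$ is a Quillen equivalence I would invoke Theorem \ref{thm:homotopic descent}: strong homotopy flatness of ${}_AX$ together with right dualizability and flatness of $(A,C)$ gives effective homotopic descent, provided $X$ is also homotopy faithful as a left $A$-module. The homotopy faithfulness should be extracted from the assumption that $T_*$ is the left adjoint of a Quillen equivalence, since on underlying modules $T_*$ is $-\tensor_A X$, and a left Quillen equivalence reflects weak equivalences between cofibrant objects; this is where the flatness hypotheses are used to pass between the comodule and module levels. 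Once $\Can_X$ is a Quillen equivalence, the factorization forces $(g_T)_*\dashv (g_T)^*$ to be a Quillen equivalence, whence by Proposition \ref{prop:we-QE} the coring map $g_T$ is a copure weak equivalence (using $B$ fibrant, so that $X_*(C)$ and $D$ are fibrant comodules over themselves, to conclude $g_T$ is a weak equivalence).

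\textbf{The main obstacle.} The delicate point is the converse: disentangling the Quillen-equivalence property of the composite $T_*=(g_T)_*\circ\Can_X$ into properties of the two factors. Two-out-of-three for Quillen equivalences is not automatic, so the argument must first independently establish that $\Can_X$ is a Quillen equivalence via homotopic faithful flatness, and the subtlety is proving homotopy faithfulness of ${}_AX$ from the equivalence hypothesis on $T_*$. Here the interplay between the left-induced model structure on comodules and the underlying module structure, mediated by the flatness and strong homotopy flatness hypotheses, is what makes the deduction go through.
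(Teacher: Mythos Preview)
Your forward implication is exactly the paper's argument: factor $T_*$ as $(g_T)_*\circ\Can_X$ via Proposition~\ref{prop:universal coring}, and observe that each factor is a Quillen equivalence by hypothesis.

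The converse, however, has a genuine gap. You propose to establish effective homotopic descent \emph{first}, by invoking Theorem~\ref{thm:homotopic descent}, and then deduce copurity of $g_T$ by two-out-of-three. But Theorem~\ref{thm:homotopic descent} requires ${}_AX$ to be homotopy \emph{faithful}, i.e.\ that $-\tensor_A X\colon\VV_A\to\VV$ reflects weak equivalences between \emph{all} right $A$-modules. From the Quillen equivalence $T_*\dashv T^*$ you only get that $-\tensor_A X$ reflects weak equivalences between cofibrant objects of $\VV_A^C$. There is no mechanism to promote this to reflection on arbitrary $A$-modules: a general $A$-module need not underlie any $C$-comodule, and cofibrant replacement in $\VV_A$ does not produce a $C$-comodule. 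The flatness and strong homotopy flatness hypotheses do not bridge this gap; they control preservation, not reflection.

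The paper reverses the order. It first shows directly that $g_T$ is a copure weak equivalence, and only then uses two-out-of-three to obtain effective homotopic descent. The key is an explicit identification of the counit of $T_*\dashv T^*$ at a fibrant $M\in\VV_B^D$: using flatness of $(A,C)$ and dualizability of $X_B$, Proposition~\ref{prop:R_X dualizable} gives $T^*(M)\cong M\cotensor_D(X^\vee\tensor_A C)$, and the counit factors as
\[
\big(M\cotensor_D(X^\vee\tensor_A C)\big)\tensor_A X \;\longrightarrow\; M\cotensor_D\big(X^\vee\tensor_A C\tensor_A X\big) \;\xrightarrow{\,M\cotensor_D g_T\,}\; M\cotensor_D D\cong M.
\]
Strong homotopy flatness of ${}_AX$ makes the first map (commuting $-\tensor_A X$ past the equalizer defining $\cotensor_D$) a weak equivalence, and the composite is a weak equivalence since it represents the homotopy counit of a Quillen equivalence. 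Hence $M\cotensor_D g_T$ is a weak equivalence for every fibrant $M$, which is precisely copurity of $g_T$ (and taking $M=D$, using $B$ fibrant, shows $g_T$ is a weak equivalence). Now Proposition~\ref{prop:we-QE} makes $(g_T)_*\dashv(g_T)^*$ a Quillen equivalence, and two-out-of-three yields effective homotopic descent. This route uses strong homotopy flatness where it actually helps---to interchange $-\tensor_A X$ with a finite limit---rather than trying to manufacture homotopy faithfulness, which is not available.
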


\begin{proof}
The adjunction (\ref{eq:coring-adjunction}) is a Quillen adjunction by Proposition \ref{prop:Quillen-comod}. Since $X$ is right dualizable, it follows from Proposition \ref{prop:universal coring} that the adjunction \eqref{eq:coring-adjunction} factors as a generalized descent adjunction followed by a change-of-corings adjunction,
\begin{equation} \label{eq:braided factorization}
\xymatrix{ \VV_A^C \ar@<1ex>[rrr]^-{\Can _{X}} &&& \VV_B^{X_*(C)} \ar@<1ex>[lll]^-{\Prim_{X}} \ar@<1ex>[rrr]^-{(g_{T})_*} &&& \VV_B^D. \ar@<1ex>[lll]^-{(g_{T})^{*}}}
\end{equation}
If $X$ satisfies effective homotopic descent with respect to $C$, then the first adjunction in the factorization \eqref{eq:braided factorization} is a Quillen equivalence by definition (see Definition \ref{defn:homotopic descent}). If $g_{T}\colon X_*(C) \rightarrow D$ is a copure weak equivalence, then the second adjunction in \eqref{eq:braided factorization} is a Quillen equivalence by Proposition \ref{prop:we-QE}. It follows that \eqref{eq:coring-adjunction} is a Quillen equivalence.

Conversely, suppose that $B$ is fibrant in $\VV_B$, $(A,C)$ is flat, ${}_AX$ is strongly homotopy flat, and the adjunction \eqref{eq:coring-adjunction} is a Quillen equivalence. Proposition \ref{prop:R_X dualizable} implies that the component of the counit of the adjunction \eqref{eq:coring-adjunction} at $M\in \VV_B^D$ may be identified with the composite
$$\big( M\cotensor_D (X^\vee \tensor_A C) \big) \tensor_A X\to M\cotensor_D (X^\vee \tensor_A C \tensor_A X) \xrightarrow{M\cotensor_D g_T} M\cotensor_D D \cong M,$$
which represents the homotopy counit when $M$ fibrant (since ${}_AX$ is homotopy flat) and is therefore a weak equivalence, as \eqref{eq:coring-adjunction} is a Quillen equivalence. Moreover, since ${}_AX$ is strongly homotopy flat, the first map in the composite is a weak equivalence.   Hence, $M\cotensor_D g_T\colon  M\cotensor_D X_*(C) \to M\cotensor_D D$ is a weak equivalence for all fibrant $M\in \VV_B^D$ and in particular for $M=D=B\otimes _{B}D$, which is fibrant since $B$ is fibrant in $\VV_{B}$.  In other words,  $g_T\colon X_*(C)\to D$ is a copure weak equivalence. It follows from Proposition \ref{prop:we-QE} that the second adjunction in the factorization \eqref{eq:coring-adjunction} is a Quillen equivalence. By the 2-out-of-3 property for Quillen equivalences, the first adjunction must be a Quillen equivalence as well, i.e., $X$ satisfies effective homotopic descent with respect to $C$.
\end{proof}

The special case when the adjunction is induced by a morphism of corings is worth singling out. Recall Examples \ref{ex:braided bimodules 2} and \ref{ex:canonical-coringmap}.

\begin{corollary} \label{thm:we=QE for corings}
Assume Hypothesis \ref{hypo:m-t}.

Let $(\varphi,f)\colon (A,C)\rightarrow (B,D)$ be a morphism of corings in $\VV$ and suppose that $-\tensor_A B\colon \VV_A \to \VV_B$ is a left Quillen functor.

If the morphism $\varphi\colon A\rightarrow B$ satisfies effective homotopic descent with respect to $C$, and the morphism of $B$-corings $f\colon B_*(C) \rightarrow D$ is a copure weak equivalence, then the adjunction governed by $(\varphi,f)$,
$$\bigadjunction{\VV_A^C}{\VV_B^D}{(T_{\vp, f})_{*}}{(T_{\vp, f})^{*}},$$
is a Quillen equivalence. 

Conversely, if $(A,C)$ is flat, and $B$ is strongly homotopy flat as a left $A$-module and fibrant as a right $B$-module, then the adjunction governed by $(\varphi,f)$ is a Quillen equivalence only if $\varphi$ satisfies effective homotopic descent with respect to $C$, and $f\colon B_*(C) \rightarrow D$ is a copure weak equivalence.
\end{corollary}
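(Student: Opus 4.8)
The plan is to deduce the corollary directly from Theorem \ref{thm:bb} by specializing to the braided bimodule $(B, T_{\varphi, f})\colon (A,C)\to (B,D)$ associated to the morphism of corings $(\varphi, f)$, as constructed in Example \ref{ex:braided bimodules 2}. The underlying $A$-$B$-bimodule is $X={}_AB_B$, with $A$ acting through $\varphi$, and the adjunction governed by $(B, T_{\varphi,f})$ is precisely the adjunction $\big((T_{\varphi,f})_*,(T_{\varphi,f})^*\big)$ in the statement. To invoke Theorem \ref{thm:bb}, I first check its standing hypotheses on the braided bimodule: the functor $-\tensor_A X=-\tensor_A B$ is a left Quillen functor by assumption, and $X_B=B$ is strictly dualizable as a right $B$-module, being free of rank one, with right dual ${}_BB_A$.

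Next I would set up the dictionary translating the conditions of the corollary into those of Theorem \ref{thm:bb}. By Example \ref{ex:canonical-coringmap}, the canonical coring is $X_*(C)=B\tensor_A C\tensor_A B = B_*(C)$, and the associated morphism of $B$-corings is $g_{T_{\varphi,f}}=f\colon B_*(C)\to D$; hence the copurity condition on $g_{T_{\varphi,f}}$ in Theorem \ref{thm:bb} is exactly the copurity condition on $f$ in the corollary. Moreover, the phrase ``$\varphi$ satisfies effective homotopic descent with respect to $C$'' is, by the natural extension of Remark \ref{rmk:eff-htpic-desc}, synonymous with the statement that the bimodule $X={}_AB_B$ satisfies effective homotopic descent with respect to $C$ in the sense of Definition \ref{defn:homotopic descent}, since the canonical adjunction $(\Can_X,\Prim_X)$ attached to $X$ and $C$ is precisely the descent adjunction for $\varphi$ over $C$.

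With these identifications in hand, both implications follow at once. For the forward direction, assuming $\varphi$ satisfies effective homotopic descent with respect to $C$ and that $f$ is a copure weak equivalence, the forward half of Theorem \ref{thm:bb} yields that the adjunction is a Quillen equivalence. For the converse, the extra hypotheses of the corollary translate verbatim into those of the converse half of Theorem \ref{thm:bb}: ``$B$ fibrant as a right $B$-module'' is the fibrancy of $B$ in $\VV_B$, ``$(A,C)$ flat'' is unchanged, and ``$B$ strongly homotopy flat as a left $A$-module'' is exactly the requirement that ${}_AX$ be strongly homotopy flat. Theorem \ref{thm:bb} then forces both effective homotopic descent for $\varphi$ over $C$ and copurity of $f$.

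Since every step is a translation through results already established, there is no genuine analytic obstacle here; the only point demanding care is the bookkeeping, namely confirming that the module-level notion of effective homotopic descent for $\varphi$ coincides with Definition \ref{defn:homotopic descent} applied to $X={}_AB_B$, and that the dualizing data $u\colon A\to X\tensor_B X^\vee$, $e\colon X^\vee\tensor_A X\to B$ for ${}_AB_B$ reproduce the canonical coring $B_*(C)$ together with the identification $g_{T_{\varphi,f}}=f$.
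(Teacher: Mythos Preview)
Your proposal is correct and follows exactly the approach the paper intends: the corollary is stated immediately after Theorem \ref{thm:bb} with only a pointer to Examples \ref{ex:braided bimodules 2} and \ref{ex:canonical-coringmap}, and your argument spells out precisely the specialization $X={}_AB_B$, the identification $X_*(C)=B_*(C)$ with $g_{T_{\varphi,f}}=f$, and the translation of the remaining hypotheses. There is nothing to add.
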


\begin{remark}
When specialized to Hopf algebroids in the category of graded modules over a commutative ring (with weak equivalences being the isomorphisms), Corollary \ref{thm:we=QE for corings} recovers \cite[Theorem 6.2]{hovey-strickland} and \cite[Theorem D]{hovey}.
\end{remark}

\section{The case of unbounded chain complexes} \label{sec:examples}
We now apply the theory of the previous sections to the category $\Ch_{R}$ of unbounded chain complexes over an arbitrary commutative ring $R$. 
Barthel, May, and Riehl showed that the category $\Ch_{R}$ admits a model structure where the weak equivalences are the chain homotopy equivalences, the cofibrations are the degreewise-split injections, and the fibrations are the degreewise-split surjections \cite[Theorem 1.15]{barthel-may-riehl}, which we call the \emph{Hurewicz model structure}. This model category structure is closed monoidal with respect to the usual graded tensor product of chain complexes, where the internal hom is the usual unbounded hom-complex.

The following homotopical version of the well known Five Lemma plays an important role throughout the analysis in the rest of this section.

\begin{lemma}[The Homotopy Five Lemma]\label{lem:h5} Let
$$\xymatrix{0\ar[r]&M'\ar [r]\ar[d]_{f'}&M\ar [r]\ar[d]_{f}&M''\ar [r]\ar[d]_{f''}&0\\0\ar[r]&N'\ar [r]&N\ar [r]&N''\ar [r]&0}$$
be a commuting diagram in $\Ch_{R}$, where the rows are degreewise-split, exact sequences. If $f'$ and $f''$ are chain homotopy equivalences, then so is $f$.
\end{lemma}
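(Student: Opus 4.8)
The plan is to reduce the statement to a property of mapping cones. Recall that a chain map $\phi$ between chain complexes over $R$ is a chain homotopy equivalence if and only if its mapping cone $\operatorname{Cone}(\phi)$ is contractible, i.e.\ chain homotopy equivalent to the zero complex; equivalently, $\phi$ becomes an isomorphism in the chain homotopy category $\Ho(\Ch_R)$. This fact holds over an arbitrary ring and requires no flatness hypotheses, so each of the three vertical maps is a chain homotopy equivalence precisely when its cone is contractible. Since $f'$ and $f''$ are assumed to be chain homotopy equivalences, $\operatorname{Cone}(f')$ and $\operatorname{Cone}(f'')$ are contractible, and the goal becomes to show that $\operatorname{Cone}(f)$ is contractible.

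First I would exploit the degreewise-split structure of the two rows to organize the cones into a short exact sequence. In each degree $n$ one has $\operatorname{Cone}(f)_n = M_{n-1}\oplus N_n$, and the chosen degreewise splittings of the rows give compatible direct-sum decompositions $M_{n-1}\cong M'_{n-1}\oplus M''_{n-1}$ and $N_n\cong N'_n\oplus N''_n$. The maps induced on cones by the horizontal maps of the diagram then assemble into a sequence $0\to \operatorname{Cone}(f')\to \operatorname{Cone}(f)\to \operatorname{Cone}(f'')\to 0$ that is exact and, by the above decomposition, degreewise split. This reduces the entire lemma to the following self-contained claim: if $0\to C'\to C\to C''\to 0$ is a degreewise-split short exact sequence in $\Ch_R$ whose outer terms $C'$ and $C''$ are contractible, then the middle term $C$ is contractible.

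Finally I would prove this claim. One clean route is to observe that a degreewise-split short exact sequence represents a distinguished triangle $C'\to C\to C''\to \Sigma C'$ in the triangulated category $\Ho(\Ch_R)$; since $C'$ and $C''$ are zero objects there, the third term $C$ is also a zero object, hence contractible. A more hands-on and entirely self-contained route is to build an explicit contracting homotopy on $C$ out of contractions $s'$ and $s''$ on $C'$ and $C''$ together with the graded splittings $\pi\colon C\to C'$ and $\sigma\colon C''\to C$ of the sequence. The natural first guess $s_0 = j s'\pi + \sigma s'' q$ satisfies $ds_0+s_0 d = 1_C + E$, where the error term $E$ factors through the inclusion $j$ and the projection $q$ and is governed by the failure of $\pi$ and $\sigma$ to be chain maps; one then absorbs $E$ by a correction term assembled from $s'$, $s''$, and these failure maps. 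I expect the main obstacle to be precisely this last bookkeeping: verifying that the correction map is well defined — using that the splitting obstructions land in the image of $j$ and in the kernel of $q$ — and that the corrected homotopy satisfies $ds+sd = 1_C$ exactly. The triangulated-category argument sidesteps this computation at the cost of invoking the triangulated structure of the chain homotopy category.
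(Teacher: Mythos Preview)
Your proposal is correct and follows essentially the same approach as the paper: reduce to the contractibility of mapping cones, assemble a degreewise-split short exact sequence $0\to\operatorname{Cone}(f')\to\operatorname{Cone}(f)\to\operatorname{Cone}(f'')\to 0$, and conclude that the middle term is contractible. The only difference is that the paper dispatches the final step by citing \cite[Lemma~3.10]{barthel-may-riehl}, whereas you sketch two direct arguments (via the triangulated structure of $\Ho(\Ch_R)$ or via an explicit contracting homotopy).
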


\begin{proof} Recall that a chain map is a chain homotopy equivalence if and only if its mapping cone is contractible, i.e., chain homotopy equivalent to 0. The hypotheses above imply that there is a degreewise-split short exact sequence of mapping cones
$$\xymatrix{0\ar[r]&C(f')\ar [r]&C(f)\ar [r]&C(f'')\ar [r]&0}$$
in which $C(f')$ and $C(f'')$ are contractible. It follows by \cite[Lemma 3.10]{barthel-may-riehl} that $C(f)$ is contractible and thus that $f$ is a chain homotopy equivalence.
\end{proof}

\begin{notation}\label{notn:ch-cx} Throughout this section, we simplify notation somewhat and let $-\otimes-$ denote tensoring over $R$.

Let $A$ be an algebra in $\Ch_{R}$. For any $A$-module $M$ with differential $d$, we let $\si M$ denote the $A$-module with $(\si M)_{n}\cong M_{n+1}$ for all $n$, where the element of $(\si M)_{n}$ corresponding to $x\in M_{n+1}$ is denoted $\si x$.  The differential on $\si M$ is given in terms of that on $M$ by $d(\si x)=-\si (dx)$.  The \emph{contractible path-object} on $M$ is the $A$-module
$$\operatorname{Path}(M)=(M\oplus \si M, D)$$
where $Dx=dx-\si x$ and $D\si x=-\si(dx)$, which comes equipped with a natural degreewise-split surjection of $A$-modules $\operatorname{Path}(M)\to M$.  Note that $\operatorname{Path}(M)$ is indeed contractible.
\end{notation}

\begin{remark}  There is also a Hurewicz-type model structure on the category of non-negatively graded chain complexes over an arbitrary commutative ring, as can be seen easily by inspection of the proofs in \cite{barthel-may-riehl}.  All of the results in this section have analogues in this bounded-below context, providing an unstable framework to which the methods and results apply.
\end{remark}

\subsection {Homotopical Morita theory of unbounded chain complexes}
Let $A$ be an algebra in $\Ch_{R}$. As shown in \cite[Theorems 4.5, 4.6, and 6.12]{barthel-may-riehl},  the category $(\Ch_{R})_{A}$ admits a proper, monoidal model category structure right-induced from the Hurewicz structure on $\Ch_{R}$ by the adjunction
$$\bigadjunction{\Ch_{R}}{(\Ch_{R})_{A},}{-\tensor A}{\UU}$$ 
which we call the \emph{relative model structure}.  A morphism of $A$-modules is a weak equivalence (respectively, fibration) in the relative structure if and only if the underlying morphism of chain complexes is a chain homotopy equivalence  (respectively, a degreewise-split surjection). We call the distinguished classes with respect to the relative model structure \emph{relative weak equivalences}, \emph{relative fibrations}, and \emph{relative cofibrations}, and $A$-modules that are cofibrant with respect to the relative model structure are called \emph{relative cofibrant}. The category of left modules admits an analogous relative structure.

Barthel, May, and Riehl provided the following useful characterization of relative cofibrant objects in  ${}_{A}(\Ch_{R})$.  A similar result holds for $(\Ch_{R})_{A}$.

\begin{proposition}\cite[Theorem 9.20]{barthel-may-riehl}\label{prop:bmr-cofib} An object $M$ in ${}_{A}(\Ch_{R})$ is relative cofibrant  if and only if it is a retract of an $A$-module $N$ that admits a filtration
$$0=F_{-1}N\subseteq F_{0}N\subseteq \cdots \subseteq F_{n}N\subseteq F_{n+1}N \subseteq \cdots$$
where $N=\bigcup_{n\geq 0}F_{n}N$ and for each $n\geq 0$, there is chain complex $X(n)$ with 0 differential such that $F_{n}N/F_{n-1}N\cong A \otimes X(n)$.
\end{proposition}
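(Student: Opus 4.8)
The plan is to prove both implications by identifying the relative cofibrant objects with the retracts of \emph{cellular} $A$-modules, where a cell is a free module $A\otimes X$ on a chain complex $X$ with zero differential. Throughout I use that, because the relative structure is right-induced, a trivial fibration $p\colon E\to B$ in ${}_A(\Ch_R)$ is precisely an $A$-module map whose underlying map is a degreewise-split surjective chain homotopy equivalence; equivalently, its kernel $K=\ker p$ is an $A$-submodule of $E$ that is an $R$-module direct summand in each degree and whose underlying complex is contractible via an $R$-linear homotopy (relative weak equivalences are only chain homotopy equivalences of \emph{underlying} complexes). An object $N$ is relative cofibrant iff $0\to N$ has the left lifting property against every such $p$. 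I will show that retracts of cellular $N$ are cofibrant and, conversely, that cofibrant objects are retracts of cellular ones.

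For the first implication I would solve the lifting problem $(0\to N,\,p)$ by constructing a lift $g\colon N\to E$ of a given $f\colon N\to B$ inductively along the filtration. Suppose $g_{n-1}\colon F_{n-1}N\to E$ is an $A$-linear lift of $f|_{F_{n-1}N}$; I must extend it over $F_{n-1}N\hookrightarrow F_nN$, whose cofiber is the free module $A\otimes X(n)$. The key point is that the obstruction to extending, together with the indeterminacy, is controlled by the hom-complex $\Map_A(A\otimes X(n),K)\cong\Map_R(X(n),K)$, the isomorphism being the free/forgetful adjunction. Since $K$ is contractible over $R$, post-composition with a contracting homotopy $h$ of $K$ defines an operator $H=h\circ(-)$ on $\Map_R(X(n),K)$ satisfying $DH+HD=1$, so this hom-complex is contractible and the obstruction vanishes. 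Crucially, because we map \emph{out} of $X(n)$, no projectivity or flatness of $X(n)$ is needed, which is exactly what makes arbitrary zero-differential $X(n)$ admissible. The $g_n$ then assemble into a lift $g$ on $N=\bigcup_n F_nN$, so $0\to N$ has the left lifting property against all trivial fibrations; as retracts of such maps again have this property, every retract of a cellular $N$ is cofibrant.

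For the converse I would produce an explicit cellular resolution of an arbitrary $A$-module $M$ and then split it. Take the two-sided bar resolution $\mathrm B=B(A,A,M)=A\otimes\overline T(A)\otimes M$ with its augmentation $q\colon\mathrm B\to M$. This $q$ is a relative trivial fibration: it is a degreewise-split surjection via the unit section $m\mapsto 1\otimes m$, and the extra degeneracy $s_{-1}$ furnishes an $R$-linear contracting homotopy of the augmented complex, so $q$ is an underlying chain homotopy equivalence. Moreover $\mathrm B$ is free as a graded $A$-module, and the filtration obtained by ordering lexicographically first by bar-length and then by internal degree is a $d$-stable filtration by $A$-submodules (both the bar and the internal differential strictly lower this lexicographic index), whose associated graded is a sum of free modules $A\otimes X(n)$ on zero-differential complexes; reindexing yields a filtration of the stated form. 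Finally, if $M$ is relative cofibrant, applying the left lifting property of $0\to M$ against the trivial fibration $q$ to $\mathrm{id}_M$ produces a section $s\colon M\to\mathrm B$ with $qs=\mathrm{id}_M$, exhibiting $M$ as a retract of the cellular module $\mathrm B$.

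I expect the main obstacle to be the bookkeeping forced by the mismatch between $A$-linearity and $R$-linearity: because relative weak equivalences are merely chain homotopy equivalences of underlying complexes, the contracting homotopy of $K$ (and the splitting of $q$, and the extra degeneracy of the bar complex) are only $R$-linear. One therefore cannot split the cells $A$-linearly, and the whole argument hinges on using the freeness of $A\otimes X(n)$ to transport each $A$-linear extension problem into an $R$-linear problem mapping into a contractible complex. A secondary technical point is organizing the natural double (bar-length, internal-degree) filtration of $\mathrm B$ into an honest $\mathbb N$-indexed, exhaustive filtration with zero-differential associated graded, which requires a little care when $M$ is unbounded.
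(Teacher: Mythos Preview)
The paper does not prove this proposition; it is quoted as \cite[Theorem 9.20]{barthel-may-riehl} and used as a black box, so there is nothing in the paper to compare your argument against. What follows is an assessment of your sketch on its own merits.

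Your forward direction is correct. The key observation---that freeness of $A\otimes X(n)$ transports the $A$-linear extension problem to an $R$-linear problem in the contractible complex $\Map_R(X(n),K)$---is exactly what is needed, and you are right that no hypothesis on $X(n)$ beyond zero differential is required because one maps \emph{out} of it.

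The converse has a real gap, and it is precisely the point you flag as needing ``a little care''. Your proposed refinement of the bar filtration, lexicographically first by bar-length $p$ and then by internal degree, does not yield an $\mathbb N$-indexed exhaustive filtration when $A$ or $M$ is unbounded below: for each fixed $p$ the internal degrees range over all of $\mathbb Z$, and the lexicographic order on $\mathbb N\times\mathbb Z$ is not even a well-order, so no reindexing by $\mathbb N$ can be both order-preserving and exhaustive. The fix is to use a different two-step refinement at each bar-length. Write $X_p=(sA)^{\otimes p}\otimes M$ so that the $p$-th bar subquotient is $A\otimes X_p$ with only its internal differential. Set
\[
G_{2p-1}=F^{\mathrm{bar}}_{p-1},\qquad G_{2p}=\text{preimage in }F^{\mathrm{bar}}_p\text{ of }A\otimes Z(X_p),
\]
where $Z(X_p)=\ker d_{X_p}$. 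Each $G_n$ is a dg-$A$-submodule, $G_{2p}/G_{2p-1}\cong A\otimes Z(X_p)$ and $G_{2p+1}/G_{2p}\cong A\otimes\bigl(X_p/Z(X_p)\bigr)$, both of which are free on complexes with zero differential (the latter because $d(X_p)\subseteq Z(X_p)$). This gives an honest $\mathbb N$-indexed filtration of $B(A,A,M)$ of the required form, after which your retract argument via the trivial fibration $q$ goes through.
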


Barthel, May, and Riehl call  filtrations of this sort \emph{cellularly $r$-split} and show that the  inclusion maps $F_{n-1}N\to F_{n}N$ are split as nondifferential, graded $A$-modules (cf.~\cite[Definition 9.17]{barthel-may-riehl}). Note in particular that $A$ itself is always cofibrant as a right or left $A$-module.

The fact that the contractible path-object construction on a relative cofibrant module is also relative cofibrant turns out to be important in the next section, and its proof provides a nice example of the utility of Proposition \ref{prop:bmr-cofib}.

\begin{lemma}\label{lem:path} If a right $A$-module $N$ is relative cofibrant, then so is the associated contractible path-object $\operatorname{Path}(N)$.
\end{lemma}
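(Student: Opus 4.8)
The plan is to reduce to the case where $N$ itself carries a cellularly $r$-split filtration and then to build an explicit filtration of that sort on $\operatorname{Path}(N)$. Since $\operatorname{Path}(-)$ is a functor on $(\Ch_{R})_{A}$ — a morphism $f$ induces $f\oplus \si f$, which one checks commutes with the twisted differential $D$ — it preserves retracts. By (the right-module analogue of) Proposition \ref{prop:bmr-cofib} it therefore suffices to treat a module $N=\bigcup_{n\ge 0}F_{n}N$ admitting a filtration with $F_{n}N/F_{n-1}N\cong A\otimes X(n)$, where $X(n)$ has zero differential: once $\operatorname{Path}(N)$ is shown to be relative cofibrant for such $N$, a general relative cofibrant module, being a retract of one of this form, has $\operatorname{Path}$ a retract of a relative cofibrant module and hence is itself relative cofibrant.

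The naive choice $\operatorname{Path}(F_{n}N)=F_{n}N\oplus \si F_{n}N$ is a $D$-closed sub-$A$-module of $\operatorname{Path}(N)$, but its successive quotients are $\operatorname{Path}(A\otimes X(n))$, which carry a nontrivial (contractible) differential and so are \emph{not} cells of the required form. The key idea, and the one point requiring care, is to refine the filtration by separating the shifted summand from the base: between $\operatorname{Path}(F_{n-1}N)$ and $\operatorname{Path}(F_{n}N)$ I would interpose
$$G_{n}:=F_{n-1}N\oplus \si F_{n}N.$$
One verifies at once that $G_{n}$ is closed under $D$ (for $x\in F_{n-1}N$, $Dx=dx-\si x$ with $dx\in F_{n-1}N$ and $\si x\in \si F_{n}N$; for $\si y$ with $y\in F_{n}N$, $D\si y=-\si(dy)\in \si F_{n}N$), so that $\operatorname{Path}(F_{n-1}N)\subseteq G_{n}\subseteq \operatorname{Path}(F_{n}N)$ is a chain of sub-$A$-modules of $\operatorname{Path}(N)$.

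Interleaving these chains over all $n$ and relabelling yields an exhaustive filtration of $\operatorname{Path}(N)$, exhaustive because $N=\bigcup_{n}F_{n}N$ and starting from $\operatorname{Path}(F_{-1}N)=0$. Its two types of subquotient are
$$G_{n}/\operatorname{Path}(F_{n-1}N)\cong \si\big(F_{n}N/F_{n-1}N\big)\cong A\otimes \si X(n),\qquad \operatorname{Path}(F_{n}N)/G_{n}\cong F_{n}N/F_{n-1}N\cong A\otimes X(n),$$
and since $X(n)$ has zero differential so does $\si X(n)$; thus both are cells of the required form. To see that the refined filtration is cellularly $r$-split, I would transport the graded $A$-module splittings of the original filtration: writing $F_{n}N\cong F_{n-1}N\oplus (A\otimes X(n))$ as nondifferential graded $A$-modules gives $G_{n}\cong \operatorname{Path}(F_{n-1}N)\oplus (A\otimes \si X(n))$ and $\operatorname{Path}(F_{n}N)\cong G_{n}\oplus(A\otimes X(n))$, so each inclusion in the refined filtration splits as nondifferential graded $A$-modules. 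By Proposition \ref{prop:bmr-cofib}, $\operatorname{Path}(N)$ is relative cofibrant, which completes the argument. The only genuine obstacle is the observation that $\operatorname{Path}$ of a single cell is not itself a cell, and this is exactly what the interposition of $G_{n}$ resolves.
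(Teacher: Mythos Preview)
Your argument is correct and follows essentially the same interleaving idea as the paper: the paper sets $F_{2n+1}\operatorname{Path}(N)=\operatorname{Path}(F_{n}N)$ and inserts an intermediate even stage, which is exactly your $G_{n}=F_{n-1}N\oplus\si F_{n}N$. In fact your formulation is the more careful one, since the paper writes the even stage as $\si F_{n}N$, which as stated does not contain $\operatorname{Path}(F_{n-1}N)$; your $G_{n}$ is the evident intended fix.
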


\begin{proof}  Because the contractible path-object construction is natural, it is enough to establish this result for $A$-modules admitting a cellularly $r$-split filtration. If $0=F_{-1}N\subseteq F_{0}N\subseteq \cdots \subseteq F_{n}N\subseteq F_{n+1}N \subseteq \cdots$ is a cellularly $r$-split filtration of $N$, then it is obvious that 
$$0=\si F_{-1}N\subseteq \si F_{0}N\subseteq \cdots \subseteq \si F_{n}N\subseteq \si F_{n+1}N \subseteq \cdots$$ 
is a cellularly $r$-split filtration of $\si N$.  We can then define a cellularly $r$-split filtration of $\operatorname{Path}(N)$ by
$$F_{2n}\operatorname{Path}(N)= \si F_{n} N$$
and
$$F_{2n+1}\operatorname{Path}(N)=\operatorname{Path}(F_{n}N)$$
for all $n\geq 0$.
\end{proof}

The following special class of cofibrant modules appears naturally in our analysis of the conditions in Definition \ref{defn:special-modules}.

\begin{definition}  An object $N$ in ${}_{A}(\Ch_{R})$ is \emph{flat-cofibrant} with respect to the relative model structure if it is a retract of an $A$-module $N$ that admits a cellularly $r$-split filtration
$$0=F_{-1}N\subseteq F_{0}N\subseteq \cdots \subseteq F_{n}N\subseteq F_{n+1}N \subseteq \cdots$$
with  $F_{n}N/F_{n-1}N\cong A \otimes X(n)$ where $X(n)$ is degreewise $R$-flat, which we call a \emph{cellularly $r$-split flat filtration}.

\end{definition}

Note for any morphism of algebras $\vp: A \to B$ and for any $n\geq 1$, the bimodule $\bigoplus_{i=1}^{n}{}_{A}B_{B}$ is strictly dualizable, with dual $\bigoplus_{i=1}^{n}{}_{B}B_{A}$.

Before formulating homotopical Morita theory in $\Ch_{R}$, we provide examples of classes of modules satisfying the conditions of  Definitions \ref{defn:special-modules} and \ref{defn:a-flat}.

\begin{proposition}\label{prop:ch-special-modules} Let $A$ be an algebra in $\Ch_{R}$, and let $N$ be a left $A$-module.
\begin{enumerate}
\item If $N$ is cofibrant in the relative structure on ${}_{A}(\Ch_{R})$, then it is  homotopy flat and homotopy projective.    In particular, the category ${}_{A}(\Ch_{R})$ satisfies the CHF hypothesis (Definition \ref{defn:CHF}). 
\item If $N$ is flat-cofibrant in the relative structure on ${}_{A}(\Ch_{R})$, then it is flat and therefore strongly homotopy flat.
\item If $N$ contains $A$ as a summand, then $N$ is homotopy faithful and homotopy cofaithful.  
\item Every algebra morphism $A\xrightarrow\simeq B$ with underlying chain homotopy equivalence is a pure weak equivalence of $A$-modules.
\end{enumerate}
\end{proposition}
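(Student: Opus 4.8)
The plan is to dispatch the four statements in roughly increasing order of difficulty, the harder ones building on part (1). Parts (3) and (4) are quick. For (3), saying that $N$ contains $A$ as a summand is exactly saying that $A$ is a retract of $N$ in ${}_A(\Ch_R)$; since $-\otimes_A A$ and $\Map_A(A,-)$ are naturally isomorphic to the identity functor, $A$ is itself homotopy faithful and homotopy cofaithful, so the claim is immediate from Remark \ref{rmk:retract-faithful}. For (4), I would observe that $\varphi\colon A\to B$ is a morphism of left $A$-modules whose underlying map is a chain homotopy equivalence, hence a weak equivalence in ${}_A(\Ch_R)$; applying the right-module analogue of part (1) to an arbitrary relative cofibrant right $A$-module $M$ shows $M$ is homotopy flat, so $M\otimes_A\varphi$ is a weak equivalence, which is precisely the assertion that $\varphi$ is a pure weak equivalence (Definition \ref{def:pure we}). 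Equivalently, once (1) supplies the CHF hypothesis, (4) is immediate from Proposition \ref{prop:pure we alt}.

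The content lies in (1) and (2), and both reduce, via Proposition \ref{prop:bmr-cofib} and the fact that all the properties in play are stable under retracts, to the case where $N$ itself carries a cellularly $r$-split (flat) filtration with $F_nN/F_{n-1}N\cong A\otimes X(n)$. For (1) the key device is the mapping cone: a morphism is a chain homotopy equivalence precisely when its cone is contractible, and both $-\otimes_A N$ and $\Map_A(N,-)$ commute with cones, being additive and compatible with the shift. This reduces homotopy flatness to showing $K\otimes_A N$ is contractible for every contractible right $A$-module $K$, and homotopy projectivity to showing $\Map_A(N,K)$ is contractible for every contractible left $A$-module $K$. In the first case each inclusion $K\otimes_A F_{n-1}N\to K\otimes_A F_nN$ is a degreewise-split injection whose cokernel $K\otimes X(n)$ is contractible (as $K$ is and $X(n)$ has zero differential), hence a trivial cofibration; so $K\otimes_A F_0N\to K\otimes_A N$ is a transfinite composition of trivial cofibrations, thus a weak equivalence, and $K\otimes_A N$ is contractible. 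Dually, $\Map_A(N,K)=\lim_n\Map_A(F_nN,K)$ is the limit of a tower of degreewise-split surjections with contractible kernels $\Map_R(X(n),K)$, i.e. of trivial fibrations, so the projection onto the contractible $\Map_A(F_0N,K)$ is a weak equivalence. The right-module version of the homotopy-flat statement then yields the CHF hypothesis for $\Ch_R$.

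For (2) it suffices to prove that a flat-cofibrant $N$ is flat in the sense of Definition \ref{defn:a-flat}; strong homotopy flatness follows formally, since $-\otimes_A N$ is additive and hence preserves finite products, while a finite limit in the abelian category $\Ch_R$ is the coreflexive equalizer of a pair of maps between finite products (the pair being coreflexive via the common section indexed by the identity morphisms), so a flat $N$ preserves finite limits on the nose, and homotopy flatness is part (1). To see $N$ is flat I would use the remark following Definition \ref{defn:a-flat} to replace ``preserves coreflexive equalizers'' by ``preserves monomorphisms,'' reduce to the cellular case, and verify by induction that $i\otimes_A F_nN$ is degreewise injective: the filtration splits degreewise, so in each degree $i\otimes_A F_nN\cong(i\otimes_A F_{n-1}N)\oplus(i\otimes X(n))$, and $i\otimes X(n)$ is injective because $X(n)$ is degreewise $R$-flat; finally, the filtered colimit $i\otimes_A N=\colim_n(i\otimes_A F_nN)$ preserves monomorphisms.

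The main obstacle is the passage to the colimit, and dually to the limit, in part (1): a sequential colimit of chain homotopy equivalences need not be one, so a direct induction via the Homotopy Five Lemma (Lemma \ref{lem:h5}) stalls at the countable stage. The cone reduction is exactly what rescues the argument, since it recasts each stage as a trivial cofibration (respectively a trivial fibration), and these are closed under transfinite composition (respectively under limits of towers) in any model category. Everything else is routine bookkeeping with the cellular filtrations and with the behaviour of $-\otimes_A-$ and $\Map_A(-,-)$ on degreewise-split short exact sequences and shifts.
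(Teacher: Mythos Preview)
Your argument is correct. Parts (2), (3), and (4) follow essentially the same lines as the paper: (3) is Remark \ref{rmk:retract-faithful}, (4) is Proposition \ref{prop:pure we alt} once CHF is established, and for (2) both you and the paper reduce strong homotopy flatness to flatness (preservation of monomorphisms in the abelian category $(\Ch_R)_A$) and run the evident induction on a cellularly $r$-split flat filtration, then pass to the filtered colimit.

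For part (1), however, you take a genuinely different route. The paper works directly with a given relative weak equivalence $f\colon M\to M'$: it first checks that free modules $A\otimes X$ are homotopy flat, then climbs the filtration inductively using the Homotopy Five Lemma (Lemma \ref{lem:h5}), and finally handles the colimit by observing that each $M\otimes_A F_{n-1}N\to M\otimes_A F_nN$ is a Hurewicz cofibration, so that $M\otimes_A N=\hocolim_n M\otimes_A F_nN$ and the map induced by the levelwise weak equivalences $f\otimes_A 1$ is a weak equivalence. Your cone reduction replaces this with a single contractibility statement: since $C(f\otimes_A 1)\cong C(f)\otimes_A N$, it suffices to show $K\otimes_A N$ is contractible for contractible $K$, and then each inclusion $K\otimes_A F_{n-1}N\to K\otimes_A F_nN$ is already a \emph{trivial} cofibration, so closure under transfinite composition finishes the argument without invoking either the Homotopy Five Lemma or a separate hocolim step. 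The dual tower argument for homotopy projectivity is likewise streamlined (and uses that an inverse limit of a tower of trivial fibrations maps to its base by a trivial fibration, which follows from the lifting characterization). Your approach is slightly slicker and makes the passage to the colimit/limit more transparent; the paper's approach is more hands-on and keeps the original weak equivalence $f$ in view throughout, which some readers may find more concrete.
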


\begin{proof}
(1) Since a retract of a homotopy flat object is necessarily homotopy flat, it suffices to establish this result for modules admitting a cellularly $r$-split filtration. We show first that $A$-modules of the form $A\otimes X$ are homotopy flat with respect to the relative model structure, for any chain complex $X$. 

Let $f:M\to N$ be a relative weak equivalence in  $(\Ch_{R})_{A}$. The map induced by $f$,
$$f\otimes _{A}1:M\otimes _{A}(A\otimes X) \to N\otimes _{A}(A\otimes X),$$
is isomorphic to
$$f\otimes 1: M\otimes X\to N\otimes X.$$
Since the chain map underlying $f$ is a chain homotopy equivalence, and tensoring with identity morphisms preserves chain homotopy equivalences, it follows that $f\otimes_{A} 1$ is a chain homotopy equivalence, i.e., $f\otimes_{A} 1$ is a relative weak equivalence.

Suppose now that $N$ admits a cellularly $r$-split filtration $0=F_{-1}N\subseteq F_{0}N\subseteq \cdots \subseteq F_{n}N\subseteq F_{n+1}N \subseteq \cdots$.  We show by induction that each $F_{n}N$ is homotopy flat.   For $n=-1$, the claim is obviously true. For the induction step, suppose that $F_{n-1}N$ is homotopy flat, and consider the short exact sequence of right A-modules
\begin{equation}\label{eqn:ses}\xymatrix{0\ar [r]& F_{n-1}N\ar [r]& F_{n}N\ar[r]&F_{n}N/F_{n-1}N\ar[r]&0,}\end{equation}
which is split as a sequence of nondifferential, graded left A-modules. After tensoring over $A$ with any left $A$-module, it becomes a degreewise-split exact sequence of chain complexes. The rightmost term in sequence (\ref{eqn:ses}) is homotopy flat by the argument above, and the leftmost term is homotopy flat by our inductive hypothesis.  The Homotopy Five Lemma (Lemma \ref{lem:h5}) therefore implies that if we tensor sequence (\ref{eqn:ses}) with any $f:M\to M'$ in $(\Ch_{R})_{A}$ that is a relative weak equivalence, then $f\otimes_{A}1: M\otimes_{A}F_{n} N\to M'\otimes_{A}F_{n}N$ is a chain homotopy equivalence, i.e., $F_{n}N$ is homotopy flat, as desired.

To show that $N$ itself is homotopy flat,  consider a relative weak equivalence $f:M\to M'$ of right $A$-modules. Since  the maps $M\otimes_{A} F_{n-1}N\to M\otimes_{A}F_{n}N$ are degreewise-split monomorphisms of R-chain complexes, i.e., cofibrations in the Hurewicz model structure on $\Ch_{R}$, it follows that $M\otimes_{A}N=\operatorname{hocolim}_{n} M\otimes_{A}F_{n}N$, and similarly for $M'\otimes_{A}N$. Moreover, the map $f\otimes_{A}1:M\otimes_{A}N \to  M'\otimes_{A}N$ is the map induced on homotopy colimits by the sequence of maps  $$f\otimes_{A}1:M\otimes_{A}F_{n}N \to  M'\otimes_{A}F_{n}N,$$ all of which are weak equivalences in the Hurewicz model structure on $\Ch_{R}$.  It follows that $f\otimes_{A}1:M\otimes_{A}N \to  M'\otimes_{A}N$ is also a weak equivalence in the Hurewicz model structure.

The proof that cofibrant modules are homotopy projective in the relative model structure on ${}_{A}(\Ch_{R})$ proceeds by a highly analogous inductive argument, which we leave to the reader.

\noindent (2) Given (1), to show that any flat-cofibrant module $N$ is strongly homotopy flat, it suffices to check that the functor $-\otimes_{A}N: (\Ch_{R})_{A}\to \Ch_{R}$ preserves finite products and kernels. It is immediate that $-\otimes_{A}N$ preserves finite products for every left $A$-module $N$, since they are the same as finite sums in $\Ch_{R}$.   Preservation of kernels is equivalent to flatness (Definition \ref{defn:a-flat}) in $(\Ch_{R})_{A}$, since it is an abelian category.    

Since any retract of a strongly homotopy flat module is strongly homotopy flat, it is enough to show that if $N$ admits a cellularly $r$-split flat filtration, 
$$0=F_{-1}N\subseteq F_{0}N\subseteq \cdots \subseteq F_{n}N\subseteq F_{n+1}N \subseteq \cdots$$
with  $F_{n}N/F_{n-1}N\cong A \otimes X(n)$, then $N$ is strongly homotopy flat.  Observe first that if $X$ is degreewise $R$-flat, then the functor $-\otimes_{A}(A\otimes X)$ preserves kernels, since it is isomorphic to $-\otimes X$.    We prove by induction on $n$ that $F_{n}N$ is strongly homotopy flat for all $n$.  

The base of the induction is trivial, since $F_{-1}N=0$.  Suppose that $F_{n-1}N$ is strongly homotopy flat, for some $n\geq 0$, and consider the degreewise $A$-split sequence
\begin{equation}\label{eqn:ses2}\xymatrix{0\ar [r]& F_{n-1}N\ar [r]& F_{n}N\ar[r]&F_{n}N/F_{n-1}N\ar[r]&0.}\end{equation}
Tensoring sequence (\ref{eqn:ses2}) with any short exact sequence 
$$\xymatrix{0\ar [r]& K\ar [r]& M\ar[r]&M'\ar[r]&0}$$
of right $A$-modules gives rise to a commuting diagram of chain maps
$$\xymatrix{&0\ar [d]&&0\ar[d]\\
0\ar [r]& K\otimes_{A}F_{n-1}N\ar [r]\ar [d]& K\otimes_{A}F_{n}N\ar[r]\ar [d]&K\otimes _{A}F_{n}N/F_{n-1}N\ar[r]\ar [d]&0\\
0\ar [r]& M\otimes_{A}F_{n-1}N\ar [r]\ar [d]& M\otimes_{A}F_{n}N\ar[r]\ar [d]&M\otimes _{A}F_{n}N/F_{n-1}N\ar[r]\ar [d]&0\\
0\ar [r]& M'\otimes_{A}F_{n-1}N\ar [r]\ar [d]& M'\otimes_{A}F_{n}N\ar[r]\ar [d]&M'\otimes _{A}F_{n}N/F_{n-1}N\ar[r]\ar [d]&0\\
&0&0&0}$$
in which all rows are split exact, and the leftmost and rightmost columns are exact.  The Five Lemma then implies that the map $K\otimes_{A}F_{n}N\to M\otimes_{A}F_{n}N$ is injective as desired, i.e., $-\otimes_{A}F_{n}N$ commutes with kernels. 

To see that $-\otimes _{A}N$ commutes with kernels, observe that for any morphism $f:M\to M'$ of right $A$-modules
\begin{align*}(\ker f) \otimes _{A }(\operatorname{colim}_{n}F_{n}N)&\cong \operatorname{colim}_{n} (\ker f\otimes_{A}F_{n}N)\cong \operatorname{colim}_{n} \ker (f\otimes_{A}1_{F_{n}N})\\
& \cong \ker \operatorname{colim}_{n}(f\otimes_{A}1_{F_{n}N})\cong\ker (f\otimes_{A}1_{N}),
\end{align*}
where the first isomorphism follows from the fact that $\ker f\otimes_{A}- $ is a left adjoint, the second isomorphism from the inductive argument, and the third isomorphism from the fact that $N$ is the colimit of a directed system of monomorphisms.

\noindent (3) This is just a special case of Remark \ref{rmk:retract-faithful}.

\noindent (4) Since the CHF hypothesis is satisfied, this follows from Proposition \ref{prop:pure we alt}.
\end{proof}

\begin{remark}
The hypothesis in $(3)$ can certainly be weakened, but we will not pursue a complete classification of homotopy (co)faithful $A$-modules here. For instance, one can show that every faithfully flat $R$-module $M$ is homotopy faithfully flat when viewed as an object of $\Ch_R$ in the Hurewicz model structure. Since homotopy faithfulness is preserved by chain homotopy equivalences, and since homotopy faithfulness is inherited from retracts, it follows that every split $R$-chain complex $V$ that has at least one faithfully flat homology module is homotopy faithful as an object of $\Ch_R$. Note also that $A\tensor V$ is homotopy faithful as an $A$-module whenever $V$ is a homotopy faithful object of $\Ch_R$.

Let $\vp:A\to B$ be a morphism of algebras in $\Ch_{R}$.   Note that for $A$ to be a retract of $B$ as $A$-modules, it must be true that $\vp$ induces an injection in homology.
\end{remark}

The next theorem is an immediate consequence of Theorem \ref{thm:morita} and Proposition \ref{prop:ch-special-modules}.

\begin{theorem}
Let $A$ and $B$ be algebras in $ \Ch_{R}$, and let $X$ be an $A$-$B$-module such that the underlying right $B$-module admits $B$ as a retract and is a retract of a cellularly $r$-split $B$-module.  The Quillen adjunction
\begin{equation} \label{eq:quiad2}
\bigadjunction{(\Ch_{R})_A}{(\Ch_{R})_B}{-\tensor_A X}{\Map_B(X,-)}
\end{equation}
is a Quillen equivalence if and only if
\begin{enumerate}
\item the map $\eta_A\colon A\rightarrow \Map_B(X,X)$ is a relative weak equivalence; and
\item the bimodule $X$ is homotopy right dualizable.
\end{enumerate}
\end{theorem}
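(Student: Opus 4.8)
The plan is to deduce the theorem directly from the Homotopical Morita theorem (Theorem \ref{thm:morita}) by checking that $\Ch_R$, equipped with the Hurewicz structure and the right-induced relative structures on its module categories, satisfies the hypotheses of that theorem, and then observing that in this concrete setting one of its three conditions becomes automatic. Concretely, the matching is: Theorem \ref{thm:morita}(1) is condition (1) of the present statement, Theorem \ref{thm:morita}(3) is condition (2) of the present statement, and Theorem \ref{thm:morita}(2) (homotopy cofaithfulness of $X_B$) holds for free.

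First I would verify the standing hypotheses. Hypothesis \ref{hypo:module} holds because the relative structures on $(\Ch_R)_A$ and $(\Ch_R)_B$ are right-induced, so the forgetful functors create weak equivalences; the CHF hypothesis (Definition \ref{defn:CHF}) holds by Proposition \ref{prop:ch-special-modules}(1). Next I would dispatch the fibrancy and cofibrancy requirements. Since the relative fibrations are exactly the morphisms whose underlying chain map is a degreewise-split surjection, the terminal map $M\to 0$ is a relative fibration for every module $M$; hence every object is fibrant, so $B$ is fibrant in $(\Ch_R)_B$ and $X$ is fibrant as a right $B$-module. The assumption that $X_B$ is a retract of a cellularly $r$-split $B$-module is precisely the statement that $X_B$ is relative cofibrant, by Proposition \ref{prop:bmr-cofib}. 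Because relative fibrations are created in $\Ch_R$, Proposition \ref{prop:x} then shows that the adjunction \eqref{eq:quiad2} governed by $X$ is a Quillen adjunction, so Theorem \ref{thm:morita} applies and yields that it is a Quillen equivalence if and only if its three conditions hold.

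The key simplification, and the step that deserves the most care, is recognizing that Theorem \ref{thm:morita}(2) is automatic. The hypothesis that $X_B$ admits $B$ as a retract, combined with Proposition \ref{prop:ch-special-modules}(3) (in its evident right-module version), shows that $X_B$ is homotopy cofaithful, i.e.\ $\Map_B(X,-)$ reflects weak equivalences between fibrant objects. Thus condition (2) of Theorem \ref{thm:morita} is satisfied unconditionally, and the Quillen equivalence is controlled solely by conditions (1) and (3) of that theorem, which are exactly conditions (1) and (2) of the present statement. I do not expect a genuine obstacle here: the argument is purely a matter of transporting the general criterion through the specific facts recorded in Proposition \ref{prop:ch-special-modules} and confirming that the fibrancy/cofibrancy conditions of Theorem \ref{thm:morita} are either automatic (fibrancy) or encoded in the hypotheses (cofibrancy). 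The only point requiring attention is the bookkeeping that isolates condition (2) as the one dispensed with by the retract hypothesis.
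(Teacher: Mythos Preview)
Your proposal is correct and follows exactly the paper's approach: the paper states that the theorem is an immediate consequence of Theorem~\ref{thm:morita} and Proposition~\ref{prop:ch-special-modules}, and you have carefully unpacked precisely that deduction, correctly identifying that the retract hypothesis dispatches condition~(2) of Theorem~\ref{thm:morita} via Proposition~\ref{prop:ch-special-modules}(3) while the fibrancy and CHF hypotheses hold automatically in the relative structure.
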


\begin{example}  The theorem above implies a homotopical version of the usual Morita equivalence between a ring $R$ and the ring of $(n\times n)$-matrices with coefficients in $R$. Let  $B$ be any algebra in $\Ch_{R}$, and  let $X=B^{\oplus n}$ for some $n\in \mathbb N$.  For any weak equivalence of algebras $\vp\colon A \xrightarrow \simeq \Map_{B}(X,X)$, the adjunction governed by $X$
$$\bigadjunction{(\Ch_{R})_A}{(\Ch_{R})_B}{-\tensor_A X}{\Map_B(X,-)},$$
is a Quillen equivalence, where the $A$-module structure on $X$ is encoded by $\vp$.
\end{example}

\subsection{Homotopical Morita theory of  differential graded comodules}
The existence of a model category structure for categories of comodules over corings in the context of unbounded chain complexes was established in \cite{hkrs}.

\begin{theorem}\cite[Theorem 6.6.3]{hkrs} \label{thm:hkrs} Let $R$ be any commutative ring. For any algebra $A$ in $\Ch_{R}$ and any $A$-coring $C$, the category $(\Ch_{R})_A^C$ of $C$-comodules in $A$-modules admits a model category structure left-induced from the relative model structure on $(\Ch_{R})_{A}$ via the forgetful functor.   
\end{theorem}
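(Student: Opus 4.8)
The plan is to recognize the desired structure as the \emph{left-induced} model structure along the forgetful functor $\UU_A\colon (\Ch_R)_A^C \to (\Ch_R)_A$, which is the \emph{left} adjoint in the comonadic adjunction $\UU_A \dashv -\tensor_A C$. Accordingly, one declares a morphism of comodules to be a weak equivalence (respectively, cofibration) exactly when its underlying morphism of $A$-modules is a relative weak equivalence, i.e.\ a chain homotopy equivalence (respectively, a relative cofibration, i.e.\ a retract of a cellularly $r$-split inclusion, cf.~Proposition \ref{prop:bmr-cofib}), and the fibrations to be the morphisms with the right lifting property against all trivial cofibrations. With these classes in hand, the 2-out-of-3 axiom, the retract axiom, and the lifting of cofibrations against trivial fibrations hold formally, since each is inherited from the relative structure on $(\Ch_R)_A$ through the faithful functor $\UU_A$. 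The genuine content lies in producing the two functorial factorizations and in verifying the compatibility (\emph{acyclicity}) of the two resulting weak factorization systems.

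First I would assemble the categorical prerequisites. By the discussion in Remark \ref{rmk:coref-eq}, $(\Ch_R)_A$ is locally presentable, and hence so is $(\Ch_R)_A^C$ as the category of coalgebras for the accessible comonad $-\tensor_A C$; in particular it is complete and cocomplete and $\UU_A$ is a left adjoint between locally presentable categories. The second ingredient is that the relative model structure on $(\Ch_R)_A$ is an \emph{accessible} model structure: although it is not cofibrantly generated in the classical sense, so that the classical Kan transfer lemma is unavailable, Barthel--May--Riehl exhibit its weak factorization systems as accessible algebraic weak factorization systems \cite{barthel-may-riehl}, and the accessible analogue of transfer does apply. With these two facts, the general existence machinery for left-induced model structures of \cite{hkrs} supplies functorial factorizations on $(\Ch_R)_A^C$ by transferring the accessible algebraic weak factorization systems along $\UU_A$, and reduces the existence of the model structure to a single \emph{acyclicity condition}: every morphism of comodules with the right lifting property against all $\UU_A$-cofibrations must be a $\UU_A$-weak equivalence.

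The crux, and the step I expect to be the main obstacle, is verifying this acyclicity condition. The plan is to exploit the contractible path-object construction of Notation \ref{notn:ch-cx}, which for each $A$-module $M$ supplies a contractible object $\operatorname{Path}(M)$ together with a natural degreewise-split surjection $\operatorname{Path}(M)\to M$, i.e.\ a relative trivial fibration. Because the cofree functor $-\tensor_A C$ is simultaneously a tensor functor and a right adjoint, it is compatible with this construction and produces analogous path data in $(\Ch_R)_A^C$; moreover, by Lemma \ref{lem:path} the path object of a relative-cofibrant module is again relative-cofibrant, and the corresponding statement for comodules ensures that these path objects interact correctly with the $\UU_A$-cofibrations. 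Given a morphism $p$ of comodules with the right lifting property against every $\UU_A$-cofibration, the strategy is to solve the lifting problems built from the $\operatorname{Path}$-construction so as to extract, on underlying $A$-modules, an explicit chain-homotopy inverse to $\UU_A(p)$ together with the witnessing homotopies. Since relative weak equivalences are precisely chain homotopy equivalences, this exhibits $\UU_A(p)$ as a relative weak equivalence, which is exactly the acyclicity condition.

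Once acyclicity is in place, the transferred weak factorization systems are compatible, and the left-induced model structure on $(\Ch_R)_A^C$ exists, with weak equivalences and cofibrations created by $\UU_A$ as required. I would emphasize that the two delicate points, not present in the classical cofibrantly generated setting, are the accessibility of the Hurewicz-type relative structure (which replaces classical small-object-argument transfer) and the path-object acyclicity argument; the remaining verifications are formal consequences of the faithfulness of $\UU_A$ and the general left-induction theorem.
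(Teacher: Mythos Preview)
The paper does not provide its own proof of this theorem; it is simply cited as \cite[Theorem 6.6.3]{hkrs}. Your proposal is a faithful sketch of the method actually used in that reference: one establishes local presentability of $(\Ch_R)_A^C$ via the comonadicity argument of Remark \ref{rmk:coref-eq}, invokes the accessibility of the Barthel--May--Riehl algebraic weak factorization systems to transfer them leftward along the forgetful functor, and reduces the existence of the left-induced model structure to an acyclicity condition, which is then verified by a (dual) Quillen path-object argument. So your outline is correct and matches the cited source.

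One small comment: your invocation of Lemma \ref{lem:path} is a bit tangential. That lemma concerns relative cofibrancy of $\operatorname{Path}(N)$ and is used elsewhere in the paper (in the proof of Theorem \ref{thm:copure}) for a different purpose. For the acyclicity condition what one actually needs is a \emph{functorial} path-object factorization $M \to \operatorname{Path}^C(M) \to M\times M$ \emph{in the comodule category} $(\Ch_R)_A^C$ whose underlying $A$-module factorization is a (weak equivalence, fibration) factorization in the relative structure; this is what lets one lift against it and extract the homotopy inverse. The construction of Notation \ref{notn:ch-cx} does furnish this, since the path object is built from suspensions and direct sums, operations that lift to comodules, but the relevant point is functoriality and compatibility with the comodule structure rather than cofibrancy.
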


We call the distinguished classes of morphisms in this left-induced model structure \emph{relative weak equivalences}, \emph{relative fibrations}, and \emph{relative cofibrations} of $C$-comodules.   

This model category structure on $(\Ch_{R})_A^C$ admits particularly nice fibrant replacements, which prove useful in our analysis of homotopical Morita theory below. We recall first the well known definition of the cobar construction.  

\begin{notation} Let $T_{A}$ denote the \emph{free tensor algebra over $A$} functor, which to any $A$-bimodule $X$ associates the graded $A$-algebra $T_{A}X=A \oplus \bigoplus _{n\geq 1} X^{\otimes_{A} n}$, the homogeneous elements of which are denoted $x_{1}|\cdots |x_{n}$.

Let  $(C,\Delta, \ve, \eta)$ be a coaugmented $A$-coring, i.e., $(C,\Delta, \ve)$ is an $A$-coring equipped with a morphism $\eta: A \to C$ of $A$-corings, with coaugmentation coideal $\overline C=\operatorname{coker} (\eta\colon   A \to C)$.  We use the Einstein summation convention and  write $\Delta (c)= c_{i}\otimes c^{i}$ for all $c\in C$ and similarly for the map induced by $\Delta$ on  $\overline C$.  If $(M, \rho)$ is a right $C$-comodule, then we apply the same convention again and write $\rho(x)=x_{i}\otimes c^{i}$ for all $x\in M$, and similarly for a left $C$-comodule.
\end{notation}

\begin{definition} Let  $(C,\Delta, \ve, \eta)$ be a coaugmented $A$-coring. For any right $C$-comodule $(M,\rho)$ and left $C$-comodule $(N, \lambda)$, we let $\Om_{A}(M;C;N)$ denote the $A$-bimodule
$$(M\otimes_{A} T_{A} (s^{-1}  \overline C) \otimes_{A} N, d_{\Om}),$$
where
\begin{align*}
d_{\Om}(x \otimes \si c_{1}|\cdots |\si c_{n}\otimes y)= &\,dx \otimes \si c_{1}|\cdots |\si c_{n}\otimes y\\
&+x\otimes \sum _{j=1}^{n}\pm\si c_{1}|\cdots |\si dc_{j}|\cdots \si c_{n}\otimes y\\
&\pm x \otimes \si c_{1}|\cdots |\si c_{n}\otimes dy\\
&\pm x_{i}\otimes \si c^{i}| \si c_{1}|\cdots |\si c_{n}\otimes y\\
&+x\otimes \sum _{j=1}^{n}\pm\si c_{1}|\cdots |\si c_{j,i}|\si c_{j}^{{i}}|\cdots \si c_{n}\otimes y\\
&\pm x\otimes \si c_{1}|\cdots |\si c_{n}|\si c_{i}\otimes y^{i}
\end{align*}
where all signs are determined by the Koszul rule, the differentials of $M$, $N$, and $C$ are all denoted $d$, and $\si 1=0$ by convention. 

If  $N=C$, then $\Om_{A} (M;C;C)$ admits a right $C$-comodule structure induced from the rightmost copy of $C$.  
\end{definition}

\begin{remark}\label{rmk:coinv} The cobar construction $\Om_{A} (M;C;C)$ is a ``cofree resolution'' of $M$, in the sense that the coaction map $\rho\colon M\to M\otimes_{A} C$ factors in $(\Ch_{R})_{A}^{C}$ as
\begin{equation}\label{eqn:factor-coaction}\xymatrix{ M\ar [dr]_{\widetilde \rho}\ar [rr]^{\rho}&&M\otimes_{A} C,\\ 
&\Om_{A}(M;C;C)\ar [ur]_{q}}
\end{equation}
where $\widetilde \rho (x)=x_{i}\otimes 1\otimes c^{{i}}$  and $q(x\otimes 1\otimes c)=x\otimes c$, while $q(x \otimes \si c_{1}|\cdots |\si c_{n}\otimes c)=0$ for all $n\geq 1$.   It is well known that the composite
$$\Om_{A}(M;C;C) \xrightarrow q M\otimes_{A} C \xrightarrow {1_{M}\otimes_{A} \ve} M$$
is a chain homotopy equivalence, by a standard ``extra codegeneracy'' argument, with chain homotopy inverse $\tilde \rho\colon  M\to \Om_{A} (M;C;C)$.  
\end{remark}

The proof of Theorem 7.8 in \cite{hess-shipley} carries over in this somewhat more general setting, to enable us to construct particularly nice fibrant replacements for comodules over flat corings.



\begin{theorem}\label{thm:fib-repl-comod} Let $D$ be a flat, coaugmented $A$-coring. For any right $D$-comodule $M$, the $D$-comodule $\Om_{A} (M;D;D)$ is relative fibrant.
\end{theorem}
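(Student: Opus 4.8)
The plan is to prove fibrancy by identifying $\Om_A(M;D;D)$, as a $D$-comodule, with a \emph{cofree} comodule, for which relative fibrancy is immediate. The starting observation is that cofree comodules are always relative fibrant. Indeed, in the relative model structure on $(\Ch_R)_A$ every object is fibrant, since the relative fibrations are the degreewise-split surjections and $N\to 0$ is always one; and by Hypothesis \ref{hypo:m-t} the forgetful functor $\UU_A$ is left Quillen, so its right adjoint $-\tensor_A D\colon (\Ch_R)_A\to (\Ch_R)_A^D$ is right Quillen and therefore carries the relative-fibrant $A$-module $N$ to the relative-fibrant $D$-comodule $N\tensor_A D$. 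Thus it suffices to exhibit $\Om_A(M;D;D)$ as such a cofree comodule.

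Now, by construction the underlying graded $D$-comodule of $\Om_A(M;D;D)=M\tensor_A T_A(\si\overline D)\tensor_A D$ is \emph{already} cofree: writing $E=M\tensor_A T_A(\si\overline D)$, the coaction ``induced from the rightmost copy of $D$'' is exactly the cofree coaction $1\tensor \Delta$ on $E\tensor_A D$. The only discrepancy between $\Om_A(M;D;D)$ and the genuine cofree dg-comodule $\Om_A(M;D;A)\tensor_A D$, whose differential is $d_E\tensor 1+1\tensor d_D$ with $d_E$ the one-sided cobar differential, is the single term of $d_{\Om}$ that couples the bar word to the rightmost copy of $D$ through $\Delta_D$ (the last summand in the definition of $d_{\Om}$). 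The plan is to remove this coupling by a $D$-comodule automorphism $\Phi$ of the underlying graded cofree comodule $E\tensor_A D$, triangular with respect to the word-length grading and built from the counit $\ve$ (the ``extra codegeneracy''), which conjugates $d_{\Om}$ into the cofree differential. Such a $\Phi$ realizes an isomorphism $\Om_A(M;D;D)\cong \Om_A(M;D;A)\tensor_A D$ of $D$-comodules, whence the conclusion. Equivalently, one may present $\Om_A(M;D;D)$ as the totalization of the cosimplicial cobar resolution of $M$, a cosimplicial object of cofree comodules, and deduce fibrancy from closure of fibrant objects under totalization of Reedy-fibrant diagrams; the two viewpoints use the same input.

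The step I expect to be the main obstacle is the construction and convergence of the conjugating automorphism $\Phi$ (equivalently, the Reedy fibrancy of the cosimplicial resolution). Since the coupling term strictly raises word length, $\Phi$ is a formal sum $1+\Phi_1+\Phi_2+\cdots$ with $\Phi_k$ raising word length by $k$; in bounded or connected situations only finitely many $\Phi_k$ contribute in each total degree, but for a general flat coaugmented $D$ over an arbitrary commutative ring one must control this infinite sum (or, in the totalization picture, show the matching maps are genuine fibrations in the left-induced structure, where fibrations are defined by a lifting property rather than an underlying condition). This is precisely where the flatness of $D$ is essential: it guarantees that the word-length filtration of $\Om_A(M;D;D)$ has degreewise-split short exact subquotients that remain degreewise-split exact after applying $-\tensor_A D$ and the relevant tensor functors, so that the Homotopy Five Lemma (Lemma \ref{lem:h5}) governs the inductive comparison and each stage of $\Phi$ is well defined and a chain map.

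Granting the identification $\Om_A(M;D;D)\cong \Om_A(M;D;A)\tensor_A D$, the theorem follows at once from the first paragraph, and it fits the stated role of $\Om_A(M;D;D)$ as a functorial fibrant replacement: the comparison $\widetilde\rho\colon M\to \Om_A(M;D;D)$ of Remark \ref{rmk:coinv} is a chain homotopy equivalence, hence a relative weak equivalence of $D$-comodules, into the relative-fibrant object just produced. This mirrors, in the present chain-level setting, the proof of \cite[Theorem 7.8]{hess-shipley}, the one new ingredient being the systematic use of the flatness of $D$ together with the Homotopy Five Lemma to handle the degreewise-split exact sequences over an arbitrary commutative ring.
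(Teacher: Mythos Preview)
Your central claim is false: $\Om_A(M;D;D)$ is \emph{not} isomorphic to the cofree comodule $\Om_A(M;D;A)\tensor_A D$ in general, not even as chain complexes. Take $A=k$ a field, $D=k[t]/(t^2)$ with $t$ primitive and $|t|=0$, and $M=k$ the trivial comodule. Then $\overline\Delta(\bar t)=0$, so $\Om_k(k;D;k)=T(\si\bar t)$ with zero differential, and $\Om_k(k;D;k)\tensor_k D$ has zero differential and is two-dimensional in every nonpositive degree. On the other hand $\Om_k(k;D;D)\simeq k$ by the extra-codegeneracy contraction. Hence no $D$-comodule automorphism $\Phi$ can conjugate $d_\Om$ into the cofree differential: the two complexes are not even quasi-isomorphic. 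Your worry about convergence of $\Phi=1+\Phi_1+\cdots$ is a symptom of this --- the series is being asked to produce an isomorphism between non-equivalent objects --- but the problem is not a technicality that flatness can cure; the target is simply wrong.

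What survives is your parenthetical alternative, and that \emph{is} essentially the paper's argument, though it is not equivalent to the conjugation approach. The paper does not try to exhibit $\Om_A(M;D;D)$ as cofree. Instead it writes $\Om_A(M;D;D)$ as the inverse limit of a tower $\cdots\to E_nM\to E_{n-1}M\to\cdots\to E_0M=M\tensor_A D$ in $(\Ch_R)_A^D$, where each $q_n\colon E_nM\to E_{n-1}M$ is a pullback of a map of the form $p_n\tensor_A D$ with $p_n$ a relative fibration in $(\Ch_R)_A$. Since $-\tensor_A D$ is right Quillen, $p_n\tensor_A D$ is a fibration in $(\Ch_R)_A^D$, hence so is its pullback $q_n$; fibrancy of $\Om_A(M;D;D)$ then follows because $E_0M$ is cofree and the tower consists of fibrations. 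Flatness of $D$ is used only to guarantee that the pullbacks defining $E_nM$ in $(\Ch_R)_A^D$ are created in $(\Ch_R)_A$, not for any convergence or Homotopy Five Lemma argument of the kind you sketched.
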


\begin{proof}[Proof sketch]  As explained in detail in the proof of Theorem 7.8 in \cite{hess-shipley}, the  $D$-comodule $\Om_{A} (M;D;D)$ is the inverse limit of a tower, natural in $M$, 
$$ ...\xrightarrow {q^{n+1}} E_{n}M\xrightarrow {q^{n}} E_{n-1}M\xrightarrow {q_{n-1}} ...\xrightarrow{q_{1}} E_{0}M=M\otimes_{A} D$$
 in $(\Ch_{R})_A^D$, where each morphism $q_{n}:E_{n}M\to E_{n-1}M$ is given by a pullback  in $(\Ch_{R})_A^D$ of the form
 $$\xymatrix{E_{n}M\ar [d]_{q_{n}}\ar [r]& \operatorname{Path}(B_{n}M)\otimes _{A}D\ar[d]_{p_{n}\otimes_{A}D}\\
 E_{n-1}M\ar [r]^{k_{n}}& B_{n}M\otimes _{A}D,}$$
 where $B_{n}$ is an endofunctor on the category of $A$-bimodules, and $p_{n}:\operatorname{Path}(B_{n}M) \to B_{n}M$ is the natural map from the contractible path-object on $B_{n}M$ to  $B_{n}M$ itself, which is a relative fibration of right $A$-modules.  It is important here that $D$ be flat over $A$, so that pullbacks in $(\Ch_{R})_A^D$ are created in $(\Ch_{R})_A$.  Since one shows by hand that $\Om_{A} (M;D;D)$ satisfies the universal condition to be the inverse limit of the tower of $E_{n}M$'s, it is not necessary to suppose that inverse limits are created in $(\Ch_{R})_A$.
 
It follows that $p_{n}\otimes _{A}D$ and thus $q_{n}$ are relative fibrations in $(\Ch_{R})_A^D$.   We conclude that $q:\Om_{A} (M;D;D) \to M\otimes_{A}D$ is a relative fibration, which implies that $\Om_{A}(M;D;D)$ is a relative fibrant $D$-comodule, since $M\otimes_{A}D$ is relative fibrant in $(\Ch_{R})_A^D$, as every right $A$-module is relative fibrant in $(\Ch_{R})_{A}$.
\end{proof}

We can now establish the existence of an interesting class of copure weak equivalences of corings.

\begin{theorem} \label{thm:copure} Let $A$ be an algebra in $\Ch_{R}$.  If $C$ is a flat $A$-coring, and $D$ is a coaugmented flat-cofibrant $A$-coring, then every relative weak equivalence $f\colon C \to D$ of $A$-corings  is copure.
\end{theorem}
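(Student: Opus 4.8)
The plan is to exploit the explicit cobar fibrant replacements of Theorem \ref{thm:fib-repl-comod} and to reduce the copurity of $f$ to the assertion that the two cobar constructions $\Om_A(D;D;C)$ and $\Om_A(D;D;D)$ are linked by a $D$-colinear chain homotopy equivalence. First I would pin down the map to be analyzed. Since $(A,C)$ is flat, Proposition \ref{prop:R_X dualizable} identifies the right adjoint as $f^{*}=-\cotensor_D C$, and a direct computation (using that $f$ preserves counits, so $\epsilon_D f=\epsilon_C$; cf.\ Example \ref{ex:braided bimodules 3}) shows that the underlying map of the counit $\epsilon_M\colon f_*f^*(M)\to M$ is identified with $M\cotensor_D f\colon M\cotensor_D C\to M\cotensor_D D\cong M$. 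Thus copurity amounts to showing that $M\cotensor_D f$ is a relative weak equivalence for every relative fibrant right $D$-comodule $M$.

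Next I would reduce to cobar constructions. The functor $f^{*}=-\cotensor_D C$ is right Quillen, hence preserves relative weak equivalences between fibrant objects; applying it to the weak equivalence $\widetilde\rho\colon M\to \Om_A(M;D;D)$ of fibrant $D$-comodules (Remark \ref{rmk:coinv} and Theorem \ref{thm:fib-repl-comod}), and invoking naturality of the counit together with two-out-of-three, I reduce to proving that $\Om_A(M;D;D)\cotensor_D f$ is a weak equivalence. A routine identification of cofree comodules then shows $\Om_A(M;D;D)\cotensor_D f\cong M\cotensor_D\Om_A(D;D;f)$, both being the evident map $\Om_A(M;D;C)\to\Om_A(M;D;D)$ induced by $f$ on the coefficient comodule.

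The heart of the argument is the claim that $\Om_A(D;D;f)\colon\Om_A(D;D;C)\to\Om_A(D;D;D)$ is a $D$-colinear chain homotopy equivalence. It is a weak equivalence by two-out-of-three applied to the naturality square of the cofree resolution $\widetilde\lambda_N\colon N\to\Om_A(D;D;N)$ (the left-handed analogue of Remark \ref{rmk:coinv}), since $f$ is a weak equivalence. Moreover both source and target are relative fibrant (the left-handed analogue of Theorem \ref{thm:fib-repl-comod}) and relative cofibrant as left $D$-comodules: because $D$ is flat-cofibrant as an $A$-bimodule, $D\tensor_A W$ is relative cofibrant as a left $A$-module for every $W$, and the word-length filtration of $\Om_A(D;D;N)$ exhibits it as a countable composite of relative cofibrations with subquotients of this form. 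A weak equivalence between cofibrant--fibrant objects is a homotopy equivalence in the model structure, which in this left-induced, Hurewicz-type setting means a $D$-colinear chain homotopy equivalence. Since $M\cotensor_D(-)$ is a $\Ch_R$-enriched functor it preserves chain homotopy equivalences, so $M\cotensor_D\Om_A(D;D;f)$ is a chain homotopy equivalence; unwinding the reductions then shows that $\epsilon_M$ is a weak equivalence, i.e.\ that $f$ is copure.

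The main obstacle is the cofibrancy statement in the third step, and more precisely the essential role of the flat-cofibrancy hypothesis on $D$. A naive word-length filtration of $M\cotensor_D f$ itself fails, because its length-zero associated graded is $M\tensor_A f$, which is a weak equivalence only when $M$ is homotopy flat --- and relative fibrant comodules need not be. The device that circumvents this is to move the resolution onto the coring side: replacing the coefficient comodules $C$ and $D$ by the cobar constructions $\Om_A(D;D;-)$ places the flat-cofibrant bimodule $D$ in the leftmost position, so that the relevant modules become relative cofibrant and the comparison $\Om_A(D;D;f)$ upgrades to a colinear homotopy equivalence that $M\cotensor_D(-)$ preserves automatically. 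I would be careful to verify (i) that flat-cofibrancy of $D$ as a bimodule indeed forces $D\tensor_A W$ to be relative cofibrant as a one-sided module, and (ii) that model-categorical homotopy in the left-induced structure on $D$-comodules coincides with $D$-colinear chain homotopy, so that the enriched functoriality of the cotensor product may legitimately be applied.
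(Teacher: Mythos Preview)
Your strategy diverges from the paper's and has a real gap precisely at the obstacle you flagged as (i). To upgrade the weak equivalence $\Om_A(D;D;f)$ to a $D$-colinear chain homotopy equivalence you need both source and target to be cofibrant in ${}^D_A(\Ch_R)$, i.e., cofibrant as left $A$-modules. Your justification invokes ``$D$ flat-cofibrant as an $A$-bimodule'' and asserts that $D\tensor_A W$ is relative cofibrant for \emph{every} left $A$-module $W$. But the hypothesis is only one-sided flat-cofibrancy of $D$; the cellularly $r$-split filtration is a filtration of one-sided modules, not of bimodules, so it does not push forward along $-\tensor_A W$. More fundamentally, the rightmost factor of each word-length subquotient of $\Om_A(D;D;C)$ is $C$, and $C$ is assumed only flat, not cofibrant. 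Iterating Proposition \ref{prop:x} from the left gives cofibrancy of $D\tensor_A(\si\overline D)^{\tensor_A n}\tensor_A C$ only if the rightmost input is already cofibrant, which it need not be. Without cofibrancy of the source you cannot pass from weak equivalence to colinear homotopy equivalence, and the enriched-functor step never fires. (Your step (ii), by contrast, is fine: the standard cylinder $P\tensor I$ with the induced coaction is a cylinder in the left-induced structure, so model-categorical left homotopy is $D$-colinear chain homotopy.)

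The paper's proof avoids this problem by a different reduction. It first reduces to $M$ both fibrant \emph{and cofibrant} (using that $f_*$ preserves all weak equivalences and $-\cotensor_D C$ preserves weak equivalences between fibrants), so that the base case $M\tensor_A f$ is a weak equivalence by CHF. It then climbs the pullback tower $E_nM$ from the proof of Theorem \ref{thm:fib-repl-comod}, comparing $E_nM\cotensor_D C$ to $E_nM$ level by level using right properness of $(\Ch_R)_A$. The input needed at each stage is that $B_nM = s^{-n}(M\tensor_A\overline D^{\,\tensor_A n+1})$ is cofibrant as a \emph{right} $A$-module, which follows from iterated Proposition \ref{prop:x} because $M$ and $\overline D$ are cofibrant on the right; $C$ is never asked to be cofibrant. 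The conclusion then follows by passing to the (homotopy) limit, since the tower maps are relative fibrations. If you want to salvage your route, you would need to first cofibrantly replace $C$ as a left $D$-comodule, or else establish directly (without cofibrancy) a $D$-colinear contracting homotopy on the mapping cone of $\Om_A(D;D;f)$ --- neither of which is immediate from the stated hypotheses.
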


\begin{proof} Observe first that since the model category structures on $(\Ch_{R})_{A}^{C}$ and $(\Ch_{R})_{A}^{D}$ are both left-induced from $(\Ch_{R})_{A}$, the change-of-corings adjunction 
$$
\bigadjunction{(\Ch_{R})_{A}^{C}}{(\Ch_{R})_{A}^{D}}{f_*}{-\cotensor_D C}.
$$
is a Quillen adjunction.  In particular, the functor $-\cotensor _{D}C$ preserves weak equivalences between fibrant objects, and $f_{*}$ preserves all weak equivalences.  It follows that in order to prove that the counit of the change-of-corings adjunction
$$\ve_{M}: f_{*}(M\cotensor_{D}C) \to M$$
is a relative weak equivalence for all relative fibrant $D$-comodules $M$, it is enough to consider the case where $M$ is cofibrant and fibrant.  By Theorem \ref{thm:fib-repl-comod}, it therefore suffices to show that the counit
$$f_{*}\big(\Om_A(M;D;D)\cotensor _{D}C\big) \to \Om_A(M;D;D)$$
is a relative weak equivalence for all relative fibrant and cofibrant $D$-comodules $M$.  
Since $\Om_{A} (M;D;D)$ is $D$-cofree (after forgetting differentials), one can check easily that 
$$\Om_{A} (M;D;D)\cotensor _{D}C\cong \Om_{A} (M;D;C)$$
and that under this identification, the counit map corresponds to
$$\Om_{A} (1_{M};1_{D};f): \Om_{A} (M;D;C) \to \Om_{A} (M;D;D).$$

We now prove by induction that this map is a relative weak equivalence, for $M$ relative fibrant and cofibrant, using the description of the cobar construction as a limit from the proof of Theorem \ref{thm:fib-repl-comod}.   Before starting the argument, we recall from the proof of Theorem 7.8 in \cite{hess-shipley} that the functor $B_{n}$ used in the construction of $\Om_{A}(M;D;D)$ is specified by
$$B_{n}M= s^{-n}(M\otimes_{A}\overline D^{\otimes_{A}n+1}),$$
where $s^{-n}$ denotes the $n$-fold iteration of the functor $\si$ described in Notation \ref{notn:ch-cx}.  Since $D$ is relative cofibrant as a right $A$-module, and the coaugmentation $\eta: A\to D$ is split by the counit of $D$ and is therefore a relative cofibration, the coaugmentation coideal $\overline D$ is also a relative cofibrant right $A$-module, as is its desuspension $\si \overline D$. Repeated application of Proposition \ref{prop:x} implies then that $B_{n}M$ is also relative cofibrant as a right $A$-module, since $M$ is relative cofibrant.

To initiate the induction, recall that the CHF condition holds in $(\Ch_{R})_{A}$, whence the functor $M\otimes_{A}-$ preserves relative weak equivalences, since $M$ is relative cofibrant.  In particular, $1_{M}\otimes _{A}f: M\otimes _{A}C\to M\otimes_{A}D$ is a relative weak equivalence of right $A$-modules.

For the inductive step of the argument, suppose that the counit
$$f_{*}(E_{n-1}M\cotensor _{D}C) \to E_{n-1}M$$ 
is a relative weak equivalence, for some $n\geq 1$.   Since $-\cotensor_{D}C$ is a right adjoint, and $(X\otimes _{A}D)\cotensor _{D}C\cong X\otimes_{A}C$ for every right $A$-module $X$, the $C$-comodule $E_{n}M\cotensor _{D}C$ is isomorphic to the pullback in $(\Ch_{R})_{A}^{C}$ of
$$E_{n-1}M\cotensor_{D}C\xrightarrow{k_{n}\cotensor _{D}C} B_{n}M\otimes_{A}C \xleftarrow {q_{n}\otimes _{A}C} \operatorname{Path}(B_{n}M) \otimes_{A}C.$$  Since pullbacks in $(\Ch_{R})_{A}^{C}$ and  $(\Ch_{R})_{A}^{D}$ are created in $(\Ch_{R})_{A}$, $f_{*}(E_{n}M\cotensor _{D}C)$ is isomorphic to the pullback in $(\Ch_{R})_{A}^{D}$ of
$$f_{*}(E_{n-1}M\cotensor_{D}C)\xrightarrow{k_{n}\cotensor _{D}C} f_{*}(B_{n}M\otimes_{A}C) \xleftarrow {q_{n}\otimes _{A}C} f_{*}\big(\operatorname{Path}(B_{n}M) \otimes_{A}C\big).$$

Consider the commuting diagram
$$\xymatrix{f_{*}(E_{n-1}M\cotensor_{D}C)\ar[rr]^{k_{n}\cotensor _{D}C}\ar [d]_{\simeq}&& f_{*}(B_{n}M\otimes_{A}C)\ar [d]_{\simeq}&& f_{*}\big(\operatorname{Path}(B_{n}M) \otimes_{A}C\big) \ar[ll]_{q_{n}\otimes _{A}C}\ar[d]_{\simeq}\\
E_{n-1}M\ar[rr]^{k_{n}}&& B_{n}M\otimes_{A}D&& \operatorname{Path}(B_{n}M) \otimes_{A}D \ar[ll]_{q_{n}\otimes _{A}D}}$$
in which the three vertical arrows are various components of the counit.  Note that the morphism induced on the pullbacks  is exactly the counit
$$f_{*}(E_{n}M\cotensor _{D}C) \to E_{n}M$$
because pullbacks in $(\Ch_{R})_{A}^{D}$ are created in $(\Ch_{R})_{A}$. The leftmost vertical arrow is a relative weak equivalence by the inductive hypothesis, while the two others are relative weak equivalences because CHF holds for $A$-modules.  Since $(\Ch_{R})_{A}$ is proper, and $q_{n}\otimes_{A}C$ and $q_{n}\otimes_{A}D$ are relative fibrations of $A$-modules, it follows that
$f_{*}(E_{n}M\cotensor _{D}C) \to E_{n}M$
is a relative weak equivalence as well.

We therefore have a commuting diagram of towers in $(\Ch_{R})_{A}^{D}$
$$\xymatrix{\vdots \ar[d]&\vdots\ar[d]\\
f_{*}(E_{n}M\cotensor _{D}C)\ar [r]^(0.6){\simeq}\ar[d]& E_{n}M\ar [d]\\
f_{*}(E_{n-1}M\cotensor _{D}C)\ar [r]^(0.6){\simeq}\ar[d]& E_{n-1}M\ar [d]\\
\vdots&\vdots}$$
such that the maps in the underlying towers in $(\Ch_{R})_{A}$ are relative fibrations and thus their limits are in fact homotopy limits.  Since countable inverse limits in $(\Ch_{R})_{A}^{D}$ are created in $(\Ch_{R})_{A}$, the induced map 
$$\lim_{n} f_{*}(E_{n}M\cotensor_{D}C) \to \lim _{n}E_{n}M=\Om_A(M;D;D)$$ 
is a relative weak equivalence as well.  To conclude observe that by modifying slightly the last two paragraphs of the proof of Theorem 7.8 in \cite{hess-shipley}, one can prove by hand that $f_{*}\Om_A(M;D;C)$ satisfies the universal condition to be $\lim_{n} f_{*}(E_{n}M\cotensor_{D}C)$.
\end{proof}

The next theorem provides an illustration of homotopy faithfully flat descent in the context of unbounded chain complexes.

\begin{theorem} \label{thm:homotopic descent for chains} Let $A$ and $B$ be algebras in $\Ch_{R}$, $C$  a flat $A$-coring, and 
${}_AX_B$  a right dualizable bimodule. If as a left $A$-module $X$ is flat-cofibrant and contains $A$ as a retract, then $X$ satisfies effective homotopic descent with respect to $(A,C)$.
\end{theorem}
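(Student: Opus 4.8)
The plan is to deduce this statement directly from the general result on homotopic faithfully flat descent, Theorem \ref{thm:homotopic descent}, by checking that its one substantive hypothesis---that ${}_AX$ be homotopy faithfully flat---is guaranteed by the two conditions assumed here. Recall that homotopy faithfully flat means homotopy faithful together with strongly homotopy flat (Definition \ref{defn:special-modules}). Thus it suffices to establish each of these two properties for $X$ as a left $A$-module and then to invoke the general theorem with the flat coring $(A,C)$ and the right dualizable bimodule ${}_AX_B$, all of whose remaining hypotheses are exactly the standing assumptions.

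First I would observe that, since $X$ is flat-cofibrant as a left $A$-module, part $(2)$ of Proposition \ref{prop:ch-special-modules} applies and shows that $X$ is flat, hence strongly homotopy flat. Next, because $\Ch_{R}$ is an abelian category, the assumption that $X$ contains $A$ as a retract as a left $A$-module is the same as containing $A$ as a summand, so part $(3)$ of Proposition \ref{prop:ch-special-modules} (equivalently Remark \ref{rmk:retract-faithful}) shows that $X$ is homotopy faithful. Combining these two observations, ${}_AX$ is homotopy faithfully flat. With this in hand, the hypotheses of Theorem \ref{thm:homotopic descent} are all satisfied: $(A,C)$ is flat, $B$ is an algebra, ${}_AX_B$ is right dualizable, and ${}_AX$ is homotopy faithfully flat. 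Its conclusion is precisely that $X$ satisfies effective homotopic descent with respect to $(A,C)$, which is what we want.

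There is essentially no obstacle beyond correctly matching the concrete conditions in $\Ch_{R}$ to the abstract ones; the entire analytic content has already been absorbed into Proposition \ref{prop:ch-special-modules}---whose inductive arguments over cellularly $r$-split (flat) filtrations, together with the Homotopy Five Lemma, show that flat-cofibrant modules are flat and that modules retaining $A$ as a retract are homotopy faithful---and into Theorem \ref{thm:homotopic descent}, which identifies the right adjoint as a cotensor product via Proposition \ref{prop:R_X dualizable} and verifies that the counit is a weak equivalence. The only fine point worth recording explicitly is that ``retract'' and ``summand'' coincide in this additive setting, so that part $(3)$ of Proposition \ref{prop:ch-special-modules} does indeed apply.
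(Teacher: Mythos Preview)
Your proposal is correct and follows exactly the paper's own proof: the paper simply states that parts (2) and (3) of Proposition~\ref{prop:ch-special-modules} together imply the result via Theorem~\ref{thm:homotopic descent}. Your version merely spells out the matching of hypotheses (including the retract/summand identification) more explicitly.
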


\begin{proof}   Parts (2) and (3) of Proposition \ref{prop:ch-special-modules} together imply that this result follows from Theorem \ref{thm:homotopic descent}.
\end{proof}

We now formulate homotopical Morita theory for differential graded corings in the special case of adjunctions induced by a morphism of corings.  The general case is not harder to formulate, but perhaps less transparent.

\begin{theorem} \label{thm:we=QE for dg-corings}
Let $(\varphi,f)\colon (A,C)\rightarrow (B,D)$ be a morphism of flat corings in $\Ch_{R}$ such that, as a left $A$-module, $B$ is flat-cofibrant and contains $A$ as a retract and such that $D$ is coaugmented and flat-cofibrant as a left $B$-module.

The adjunction governed by $(\varphi,f)$,
$$\bigadjunction{(\Ch_{R})_A^C}{(\Ch_{R})_B^D}{(T_{\vp, f})_{*}}{(T_{\vp, f})^{*}},$$
is a Quillen equivalence if and only if the morphism of $B$-corings $f\colon B_*(C) \rightarrow D$ is a relative weak equivalence.
\end{theorem}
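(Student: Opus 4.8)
The plan is to deduce this theorem directly from Corollary \ref{thm:we=QE for corings}, the general morphism-of-corings criterion, by verifying all of its hypotheses in the present chain-complex setting and feeding it the structural results of Proposition \ref{prop:ch-special-modules}, Theorem \ref{thm:homotopic descent for chains}, and Theorem \ref{thm:copure}. The underlying bimodule governing the adjunction is $X = {}_A B_B$, which is right dualizable with dual ${}_B B_A$ and cofibrant as a right $B$-module (every algebra is relative cofibrant over itself). Since relative fibrations in $(\Ch_R)_A$ and $(\Ch_R)_B$ are created in $\Ch_R$, Proposition \ref{prop:x} shows that $-\tensor_A B$ is a left Quillen functor. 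I would also record once and for all that every right module is relative fibrant, so $B$ is fibrant as a right $B$-module, and that by Proposition \ref{prop:ch-special-modules}(2) the assumption that $B$ is flat-cofibrant as a left $A$-module makes $B$ both flat and strongly homotopy flat as a left $A$-module.

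For the ``if'' direction, assume $f\colon B_*(C)\to D$ is a relative weak equivalence. Because $X={}_AB_B$ is right dualizable and, as a left $A$-module, is flat-cofibrant and contains $A$ as a retract, Theorem \ref{thm:homotopic descent for chains} gives that $\varphi$ satisfies effective homotopic descent with respect to $C$. It then remains to see that $f$ is copure, which I would obtain from Theorem \ref{thm:copure} applied to the $B$-corings $B_*(C)$ and $D$. This requires checking that $B_*(C)=B\tensor_A C\tensor_A B$ is flat as a left $B$-module: using the natural isomorphism $-\tensor_B B_*(C)\cong (\varphi^*(-)\tensor_A C)\tensor_A B$, flatness follows because $\varphi^*$ preserves coreflexive equalizers (being a right adjoint) and both $C$ and $B$ (the latter via Proposition \ref{prop:ch-special-modules}(2)) are flat as left $A$-modules, so each functor in the composite preserves coreflexive equalizers, a functor always carrying a coreflexive pair to a coreflexive pair. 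As $D$ is coaugmented and flat-cofibrant, Theorem \ref{thm:copure} then shows $f$ is a copure weak equivalence, and the first half of Corollary \ref{thm:we=QE for corings} concludes that the adjunction governed by $(\varphi,f)$ is a Quillen equivalence.

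For the converse, I would invoke the second half of Corollary \ref{thm:we=QE for corings}: the coring $(A,C)$ is flat by hypothesis, and $B$ is strongly homotopy flat as a left $A$-module and fibrant as a right $B$-module by the facts recorded in the first paragraph. Hence, if the adjunction governed by $(\varphi,f)$ is a Quillen equivalence, then $f\colon B_*(C)\to D$ is necessarily a copure weak equivalence, and in particular a relative weak equivalence (copure weak equivalences are weak equivalences by Definition \ref{defn:copure}).

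The only genuinely non-formal step is the verification that $B_*(C)$ is flat as a left $B$-module, which is what licenses the use of Theorem \ref{thm:copure} in the forward direction; everything else is a careful assembly of the cited results, the main subtlety being to match the left/right module flatness and (co)fibrancy hypotheses exactly as stated in each of them.
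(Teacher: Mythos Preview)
Your proposal is correct and follows essentially the same argument as the paper: both deduce the result from Corollary~\ref{thm:we=QE for corings} by verifying its hypotheses via Proposition~\ref{prop:ch-special-modules}(2), Theorem~\ref{thm:homotopic descent for chains}, and Theorem~\ref{thm:copure}. Your justification that $B_*(C)$ is flat as a left $B$-module is in fact more detailed than the paper's, which simply asserts that flatness of $C$ over $A$ implies flatness of $B_*(C)$ over $B$ (implicitly using, as you make explicit, that $B$ is also flat over $A$).
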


\begin{proof}  To see that Corollary  \ref{thm:we=QE for corings} implies the desired equivalence, observe that 
\begin{itemize}
\item all objects in $\Ch_{R}$ are fibrant, and every algebra is cofibrant as a module over itself;
\item $B$ is cofibrant and strongly homotopy flat as a left $A$-module by Proposition \ref{prop:ch-special-modules} (2);
\item by Theorem \ref{thm:homotopic descent for chains}, $B$ satisfies effective descent with respect to $C$; and
\item by Theorem \ref{thm:copure}, the morphism of $B$-corings $f:B_{*}(C)\to D$ is copure, since the fact that  $C$ is left $A$-flat implies that $B_*(C)$ is left $B$-flat.
\end{itemize}
\end{proof}

\begin{remark}
In particular, if we let $f$ be the identity map of $B_*(C)$, then the adjunction
$$\bigadjunction{(\Ch_{R})_{A}^{C}}{(\Ch_{R})_{B}^{B_*(C)}}{(T_{\vp, 1})_{*}}{(T_{\vp, 1})^{*}}$$
is a Quillen equivalence if as a left $A$-module, $B$ is flat-cofibrant and contains $A$ as a retract, and $C$ is coaugmented and flat-cofibrant as a left $A$-module.
\end{remark}

\begin{remark}  The hypothesis on $\vp\colon A \to B$ in the Theorem \ref{thm:we=QE for dg-corings}, requiring that $B$ be flat-cofibrant and contain $A$ as a retract, at least as a left $A$-module, is not too restrictive.  For example, the \emph{KS-extensions} (also known as \emph{relative Sullivan algebras}) of rational homotopy theory \cite{fht} are classical examples of algebra morphisms such that the target is flat-cofibrant over the source. More generally, any algebra morphism in $\Ch_{R}$ can be replaced up to quasi-isomorphism by an algebra morphism satisfying this condition.  Indeed, every morphism $\vp:A \to B$ admits a factorization
$$\xymatrix{(A,d) \ar[rr]^{\vp} \ar [dr]_{j}&&(B,d),\\&(A\coprod TV, D)\ar [ur]_{p}^{\sim}}$$
where $j$ is the inclusion into a free extension of $(A,d)$ by a free algebra on a degreewise $R$-free graded module $V$, and $p$ is a quasi-isomorphism, and \cite[Proposition 4.3.11, Remark 4.3.12]{karpova} implies that $(A\coprod TV, D)$  admits a cellularly $r$-split flat filtration and is therefore flat-cofibrant as a left $A$-module.  The inclusion $j$ can be chosen to admit a retraction as long as $\vp$ induces an injection in homology.
\end{remark}

\begin{example} {{{}}}
Recall the descent coring associated to a morphism of algebras from Example \ref{ex:desc-coring}. Let 
$$\xymatrix{A\ar[r]^{\vp}\ar[d]_{\alpha}&B\ar[d]_{\beta}\\ A'\ar[r]^{\vp'}&B'}$$
be a commuting diagram of algebra morphisms in $\Ch_{R}$, which induces a morphism of descent corings 
$$(\alpha, \beta\otimes_{\alpha}\beta): (A, B\otimes _{A}B) \to (A', B'\otimes_{A'}B').$$
If, as  a left $A$-module,  $A'$ is flat-cofibrant and contains $A$ as a retract, $B$ is flat over $A$, and $B'$ is flat-cofibrant over $A'$, then the associated adjunction 
$$\bigadjunction{(\Ch_{R})_A^{B\otimes_{A}B}}{(\Ch_{R})_{A'}^{B'\otimes _{A'}B'}}{(T_{\alpha, \beta\otimes_{\alpha}\beta})_{*}}{(T_{\alpha, \beta\otimes_{\alpha}\beta})^{*}}$$
is a Quillen equivalence if and only if $\beta\otimes_{\alpha}\beta:B\otimes_{A}B\to B'\otimes_{A'}B'$ is a relative weak equivalence.
\end{example}

In \cite{berglund-hess:hhg}  we provide further concrete applications of Theorem \ref{thm:we=QE for dg-corings} related to the theory of homotopic Hopf-Galois extensions.

\appendix

\section{Enriched model categories}\label{app:enriched}
In this appendix we review some elementary aspects of enriched model category theory. For a thorough treatment, we refer to Riehl \cite{riehl}.

Let $(\VV,\tensor,\kk)$ be a closed symmetric monoidal category. A category $\CC$ has a \emph{$\VV$-structure} if it is tensored, cotensored and enriched in $\VV$. For objects $X,Y\in \CC$ and $K\in \VV$, we use the notation
$$K\tensor X\in\CC,\quad \Map_\CC(X,Y)\in\VV,\quad Y^K\in\CC,$$
for the tensor product, enrichment and cotensor product, respectively. We assume that the structures are compatible in the sense that there are natural bijections
$$\CC(K\tensor X,Y) \cong \VV(K,\Map_\CC(X,Y)) \cong \CC(X,Y^K).$$

An adjunction between $\VV$-categories that preserves all relevant structure is called a \emph{$\VV$-adjunction}. {As is well known,} the following proposition characterizes $\VV$-adjunctions.

\begin{proposition} \label{prop:V-adjunction}
The following are equivalent for an adjunction
$$\adjunction{\CC}{\DD}{F}{G},\quad F \dashv G,$$
between $\VV$-categories.
\begin{enumerate}
\item The left adjoint $F$ is a tensor functor, i.e., there is a natural isomorphism $\alpha_{K,X}\colon F(K\tensor X) \stackrel{\cong}{\rightarrow} K\tensor F(X)$ for $K\in\VV$ and $X\in \CC$, such that
$$\alpha_{K\otimes L, X}=(1\otimes \alpha_{L,X})\alpha_{K, L\otimes X},$$
for all $K,L\in \VV$ and $X\in \CC$.
\item The right adjoint $G$ is a cotensor functor, i.e., there is a natural isomorphism $\beta^{K,Y}\colon (GY)^K\stackrel{\cong}{\rightarrow} G(Y^K)$ for $K\in\VV$ and $Y\in \DD$, satisfying a similar associativity relation.
\item The adjunction is $\VV$-enriched, i.e., there is a natural isomorphism $$\phi^{X,Y}\colon \Map_{\DD}(FX,Y)\stackrel{\cong}{\rightarrow} \Map_{\CC}(X,GY)$$ for $X\in\CC$ and $Y\in\DD$.
\end{enumerate}
\end{proposition}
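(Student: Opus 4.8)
The plan is to deduce the three-way equivalence from a single underlying mechanism: each of the conditions (1), (2), (3) encodes a comparison of representable functors, so that producing the required structural isomorphism reduces, via the Yoneda lemma, to exhibiting a chain of natural bijections of hom-sets built from the three defining adjunctions of the $\VV$-structure and the ambient adjunction $F \dashv G$. I would organize the proof as a cycle, say $(1)\Rightarrow(3)\Rightarrow(2)\Rightarrow(1)$, or equivalently prove $(1)\Leftrightarrow(3)$ and $(2)\Leftrightarrow(3)$ separately; the two halves are formally dual, one using the tensor--hom adjunction and the other the cotensor--hom adjunction.

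For $(1)\Rightarrow(3)$, fix $X\in\CC$ and $Y\in\DD$ and compute, for an arbitrary test object $K\in\VV$,
\begin{align*}
\VV\big(K,\Map_\DD(FX,Y)\big) &\cong \DD(K\tensor FX,Y) \\
&\cong \DD\big(F(K\tensor X),Y\big) \\
&\cong \CC(K\tensor X,GY) \\
&\cong \VV\big(K,\Map_\CC(X,GY)\big),
\end{align*}
where the first and last bijections are the tensor--hom adjunctions in $\DD$ and $\CC$, the second applies the tensor-functor isomorphism $\alpha_{K,X}$, and the third is $F\dashv G$. Each step is natural in $K$, so Yoneda produces the desired natural isomorphism $\phi^{X,Y}\colon\Map_\DD(FX,Y)\cong\Map_\CC(X,GY)$, with naturality in $X$ and $Y$ inherited from the constituent bijections. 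For the converse $(3)\Rightarrow(1)$ I would run the same chain in the other variable: fixing $K$ and $X$ and testing against $Y\in\DD$,
\begin{align*}
\DD\big(F(K\tensor X),Y\big) &\cong \CC(K\tensor X,GY)\cong \VV\big(K,\Map_\CC(X,GY)\big) \\
&\cong \VV\big(K,\Map_\DD(FX,Y)\big)\cong \DD(K\tensor FX,Y),
\end{align*}
which is natural in $Y$, so Yoneda in $\DD$ yields $\alpha_{K,X}\colon F(K\tensor X)\cong K\tensor FX$. The equivalence $(2)\Leftrightarrow(3)$ is entirely dual, replacing the tensor--hom adjunction by the cotensor--hom adjunction $\CC(X,Y^K)\cong\VV(K,\Map_\CC(X,Y))$ and testing against objects of $\CC$ (to recover $\beta^{K,Y}$) or of $\VV$ (to recover $\phi$).

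The genuinely substantive step, as opposed to the formal bookkeeping above, will be verifying the coherence conditions: that the isomorphism $\alpha$ manufactured from $\phi$ satisfies $\alpha_{K\tensor L,X}=(1\tensor\alpha_{L,X})\alpha_{K,L\tensor X}$, and likewise for $\beta$. My approach here would be to trace the image of an identity morphism (equivalently, the relevant unit or counit) through the two iterated chains of bijections and show that both $\alpha_{K\tensor L,X}$ and the composite $(1\tensor\alpha_{L,X})\alpha_{K,L\tensor X}$ represent, under Yoneda, the same natural transformation. This in turn reduces to the compatibility of the doubly-iterated tensor--hom adjunction $\DD\big(F((K\tensor L)\tensor X),Y\big)\cong\DD\big(F(K\tensor(L\tensor X)),Y\big)$ with the associator of $\tensor$, which is precisely the coherence already present in the $\VV$-structure. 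Thus the pentagon is not an extra input but a consequence of the associativity axioms of the enrichment, and I would present it as such rather than by a direct diagram chase.
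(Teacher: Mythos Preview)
Your proposal is correct and follows essentially the same approach as the paper: the paper's proof consists of a single diagram displaying exactly the chain of bijections you wrote out, together with the instruction ``Use the Yoneda Lemma,'' leaving all details to the reader. Your version is simply a more explicit unpacking of that sketch, and your discussion of the coherence condition for $\alpha$ goes beyond what the paper records.
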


\begin{proof}
Use the Yoneda Lemma and the diagram
$$
\xymatrix@C=-10pt@R=20pt{
\DD(K\tensor FX,Y) \ar@{-->}[d]^-{\alpha_{K,X}^*} \ar[rr]^-\cong && \VV(K,\Map_\DD(FX,Y)) \ar@{-->}[d]^-{\phi_*^{X,Y}} && \DD(FX,Y^K) \ar[d]^-\cong \ar[ll]_-\cong \\
\DD(F(K\tensor X),Y) \ar[dr]^-\cong && \VV(K,\Map_\CC(X,GY) \ar[dl]_-\cong \ar[dr]^-\cong && \CC(X,G(Y^K)) \\
& \CC(K\tensor X,GY) && \CC(X,(GY)^K) \ar@{-->}[ur]^-{\beta_*^{K,Y}}}
$$
to prove the proposition.
\end{proof}

\subsection{$\VV$-model categories}
Suppose that $(\VV,\tensor,\kk)$ is a closed symmetric monoidal category that also admits the structure of a Quillen model category. 

\begin{definition}
A $\VV$-category $\CC$ with a model structure is called a \emph{$\VV$-model category} if the following axiom is satisfied.

\begin{axiom} \label{axiom:sm7}
Given morphisms
$$i\colon A\rightarrow B\in \CC,\quad j\colon K \rightarrow L\in\VV,\quad p\colon X \rightarrow Y\in \CC,$$
such that $i$ and $j$ are cofibrations, $p$ is a fibration, and at least one out of $i$, $j$, $p$ is a weak equivalence, the following lifting problems can be solved.

\begin{enumerate}
\item (In terms of the $\VV$-enrichment of $\CC$)
$$\xymatrix{K \ar[d]^-j \ar[r] & \Map_{\CC}(B,X) \ar[d]^-{i*p} \\ L \ar[r] \ar@{-->}[ur] & \Map_{\CC}(A,X)\times_{\Map_{\CC}(A,Y)} \Map_{\CC}(B,Y);}$$

\item (In terms of the $\VV$-tensor structure on $\CC$)
$$\xymatrix{K\tensor B \bigsqcup_{K\tensor A} L\tensor A \ar[r] \ar[d]^-{j*i} & X \ar[d]^-p \\ L\tensor B \ar@{-->}[ur] \ar[r] & Y;}$$

\item (In terms of the $\VV$-cotensor structure on $\CC$)
$$\xymatrix{A\ar[d]^-i \ar[r] & X^L \ar[d]^-{j*p} \\ B \ar[r] \ar@{-->}[ur] & X^K \times_{Y^K} Y^L.}$$
\end{enumerate}
\end{axiom}

It is an exercise in adjunctions to show that the lifting problems are equivalent: a solution to one yields solutions to the two other upon taking appropriate adjoints.

The symmetric monoidal category $\VV$, with its given model structure and its canonical $\VV$-structure, is called a \emph{symmetric monoidal model category} if it is a $\VV$-model category itself, cf.~\cite{schwede-shipley}.
\end{definition}

\subsection{$\VV$-enrichment of induced model structures}
Consider an adjunction between model categories
\begin{equation} \label{eq:qad}
\adjunction{\CC}{\DD}{F}{G},
\end{equation}
{where $F$ is the left adjoint.} We say that the model structure on $\DD$ is \emph{right induced} from $\CC$ if a map $f$ in $\DD$ is a weak equivalence (fibration) if and only if $Gf$ is a weak equivalence (fibration) in $\CC$. Dually, we will say that the model structure on $\CC$ is \emph{left induced} from $\DD$ if a map $f$ in $\CC$ is a weak equivalence (cofibration) if and only if $Ff$ is a weak equivalence (cofibration) in $\DD$.

The following proposition is presumably well-known (see e.g.~\cite[Lemma 2.25]{bhkkrs} for the case of left-induced structures), but we indicate the proof for the reader's convenience.

\begin{proposition} \label{prop:v-induced}
A model structure induced from a $\VV$-model structure along a $\VV$-adjunction is itself a $\VV$-model structure.

More precisely, suppose given an adjunction between model categories as in \eqref{eq:qad}, where $\CC$ and $\DD$ have $\VV$-structures and the adjunction has a $\VV$-structure.

\begin{enumerate}
\item If the model structure on $\DD$ is right induced from $\CC$, and $\CC$ satisfies Axiom \ref{axiom:sm7}, then so does $\DD$.
\item If the model structure on $\CC$ is left induced from $\DD$, and $\DD$ satisfies Axiom \ref{axiom:sm7}, then so does $\CC$.
\end{enumerate}
\end{proposition}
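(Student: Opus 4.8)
The plan is to exploit the remark, made just after Axiom \ref{axiom:sm7}, that the three lifting problems defining \textbf{SM7} are equivalent, so that in each part I may test the axiom in whichever of the tensor or cotensor form is adapted to the functor at hand. The whole argument then reduces to reformulating \textbf{SM7} as a closure property of (co)fibrations under the pushout-product and pullback-cotensor constructions, and to transporting those constructions across the $\VV$-adjunction. Concretely, for a cofibration $j\colon K\to L$ in $\VV$ and a fibration $p\colon X\to Y$, write $j*p\colon X^L\to X^K\times_{Y^K}Y^L$ for the pullback-cotensor map, and dually, for a cofibration $i\colon A\to B$, write $j*i\colon K\tensor B\sqcup_{K\tensor A}L\tensor A\to L\tensor B$ for the pushout-product. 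Since in form (3) the map $i$ ranges over \emph{all} cofibrations and in form (2) the map $p$ ranges over \emph{all} fibrations, \textbf{SM7} is equivalent to each of the statements: $j*p$ is a fibration (trivial as soon as $j$ or $p$ is), and $j*i$ is a cofibration (trivial as soon as $j$ or $i$ is).

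For the right-induced case (1), the right adjoint $G$ both preserves and reflects fibrations and weak equivalences, so it suffices to show that $G(j*p)$ is a (trivial) fibration in $\CC$. Because $G$ is a cotensor functor (Proposition \ref{prop:V-adjunction}) and, being a right adjoint, preserves the defining pullback, there is a natural isomorphism $G(j*p)\cong j*(Gp)$, the right-hand side being the pullback-cotensor in $\CC$ formed from $j$ and $Gp$. Now $Gp$ is a fibration in $\CC$ because the structure on $\DD$ is right-induced, so the hypothesis that $\CC$ satisfies \textbf{SM7} gives that $j*(Gp)$ is a fibration in $\CC$, trivial whenever $j$ or $Gp$ is. Reflecting back through $G$ yields that $j*p$ is a (trivial) fibration in $\DD$, which is exactly the reformulated \textbf{SM7} for $\DD$.

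The left-induced case (2) is entirely dual. Here the left adjoint $F$ preserves and reflects cofibrations and weak equivalences, so it suffices to show that $F(j*i)$ is a (trivial) cofibration in $\DD$. Since $F$ is a tensor functor and a left adjoint it preserves both the tensors and the defining pushout, giving a natural isomorphism $F(j*i)\cong j*(Fi)$; as $Fi$ is a cofibration in $\DD$ and $\DD$ satisfies \textbf{SM7}, the right-hand map is a (trivial) cofibration, and reflecting through $F$ concludes the argument as before.

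The only genuinely non-formal point, and the step I expect to take the most care, is establishing the isomorphisms $G(j*p)\cong j*(Gp)$ and $F(j*i)\cong j*(Fi)$ \emph{naturally and compatibly with the gluing}: one must verify that the tensor/cotensor comparison isomorphisms $\alpha$ and $\beta$ of Proposition \ref{prop:V-adjunction} identify the canonical map out of the pushout (respectively, into the pullback) on the two sides. This is where the associativity coherence $\alpha_{K\otimes L,X}=(1\otimes\alpha_{L,X})\alpha_{K,L\otimes X}$ enters, together with preservation of pushouts by $F$ and of pullbacks by $G$; granting this compatibility, the rest is the formal bookkeeping above.
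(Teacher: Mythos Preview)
Your proof is correct and follows essentially the same route as the paper: reformulate Axiom~\ref{axiom:sm7} in its cotensor (respectively tensor) form, use that $G$ is a cotensor functor preserving pullbacks (respectively $F$ is a tensor functor preserving pushouts) to obtain $G(j*p)\cong j*(Gp)$ (respectively $F(j*i)\cong j*(Fi)$), and then invoke the creation of (trivial) fibrations by $G$ (respectively (trivial) cofibrations by $F$). One small remark: the associativity coherence of $\alpha$ that you flag at the end is not actually needed for the identification $G(j*p)\cong j*(Gp)$---naturality of the cotensor comparison $\beta^{K,Y}$ in both variables, together with preservation of pullbacks, already suffices.
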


\begin{proof}
Given a cofibration $j\colon K\rightarrow L$ in $\VV$ and a fibration $p\colon X\rightarrow Y$ in $\DD$, we need to show that $j*p \colon X^L\rightarrow X^K \times_{Y^K} Y^L$ is a fibration and that it is a weak equivalence if either $j$ or $p$ is a weak equivalence. The map $Gp$ is a fibration in $\CC$, and since $\CC$ satisfies Axiom \ref{axiom:sm7}, the map $j*Gp$ is a fibration. Since $G$ is a cotensor functor, there is an isomorphism of morphisms $G(j*p) \cong j*Gp$, whence $G(j*p)$ is a fibration. Since the model structure on $\DD$ is right induced from the model structure on $\CC$, this means that $j*p$ is a fibration. We leave the rest of the proof to the reader.
\end{proof}

\section{Dualizability}\label{app:dualizable}
The notion of dualizability plays an important role in our study of those (braided) bimodules that induce Quillen equivalences between model categories of modules over algebras and of comodules over corings.  We recall here this classical notion and some of its elementary properties, expressed in terms of adjunctions in bicategories. We do not recall the definition of a bicategory, which the reader can find in \cite{benabou}. The following definition generalizes the usual notion of an adjunction of categories.

\begin{definition}\label{defn:dual}  Let $\mathsf{C}$ be a bicategory. An adjunction in $\mathsf C$ consists of a pair of objects $A$ and $B$, a pair of $1$-morphisms $l:A \to B$ and $r:B\to A$, and a pair of $2$-morphisms $\eta: 1_{A}\to rl$ and $\ve: lr \to 1_{B}$ satisfying the \emph{triangle identities}
$$(r*\ve)(\eta*r)=1_{r}\quad\text{and}\quad (\ve*l)(l*\eta)=1_{l},$$
where $*$ denote the usual whiskering of $2$-morphism by a $1$-morphism.
We call $l$ the \emph{left adjoint}, $r$ the \emph{right adjoint}, $\eta$ the \emph{unit}, and $\ve$ the \emph{counit} of the adjunction, and write $l\dashv r$.
\end{definition}

\begin{remark} The right adjoint of a $1$-morphism is unique up to isomorphism if it exists.  Moreover, bifunctors clearly preserve adjunctions.
\end{remark}

\begin{definition}  Let $(\VV, \tensor, \Bbbk)$ be a monoidal category.  Its \emph{delooping bicategory} $\mathsf B\VV$  is the bicategory with exactly one object $\bullet$ and such that the category $\mathsf B\VV (\bullet,\bullet)$ is $\VV$, where composition of $1$-morphisms in $\mathsf B\VV$ is given by the tensor product of objects in $\VV$, and composition of $2$-morphisms in $\mathsf B\VV$ is the same as composition of morphisms in $\VV$.
\end{definition}

\begin{definition} Let $(\VV, \tensor, \Bbbk)$ be a monoidal category. 
An object $X$ in $\VV$ is \emph{right dualizable} if, seen as a $1$-morphism in $\mathsf B \VV$, it admits a right adjoint $Y$, which we call a \emph{right dual} of $X$, while $X$ is a \emph{left dual} to $Y$.
\end{definition}

\begin{remark}  Unraveling the definition above, we see that $Y$ is a right dual to $X$ if there are morphisms
$$u\colon \Bbbk \to X \otimes Y \quad \text{and}\quad e\colon Y\otimes X \to \Bbbk,$$
which we call the \emph{coevaluation} and \emph{evaluation} in order to distinguish them from the numerous other units and counits of adjunctions with which we work in this paper, such that the composites
$$X \xrightarrow {u\otimes 1} X\otimes Y\otimes X \xrightarrow {1\otimes e} X\quad\text{and}\quad Y \xrightarrow{1\otimes u} Y\otimes X \otimes Y \xrightarrow {e\otimes 1} Y$$
are both identities.
\end{remark}

%

The following well known example of a bicategory, in which the notion of adjunction is a many-object generalization of dualizability, is important in this paper.  

\begin{example}\label{ex:bimod}  Let $\VV$ be a monoidal category.  The bicategory $\ALG_{\VV}$ of bimodules in $\VV$ has as objects all algebras in $\VV$, while a $1$-morphism from $A$ to $B$ is an $A$-$B$-bimodule ${}_{A}X_{B}$, and a $2$-morphism from ${}_{A}X_{B}$ to ${}_{A}Y_{B}$ is a morphism of $A$-$B$-bimodules.  If $X:A\to B$ and $Y:B\to C$ are $1$-morphisms, then their composite is defined to be $X\tensor_{B}Y: A \to C$. Note that for any object $A$, the identity morphism on $A$ is $A$ itself, seen as an $A$-$A$-bimodule. 

If there is an adjunction
$$\adjunction {A}{B}{X}{Y};\qquad u\colon A \to X \tensor _{B} Y, \quad e\colon Y\tensor _{A}X \to B,$$
then we say that $X$ is \emph{right dualizable} and call $Y$ a \emph{right dual} of $X$ and $X$ a \emph{left dual} of $Y$.  Notice that 
 the composites
$$X \xrightarrow {u\otimes 1} X\otimes_{B} Y\otimes_{A} X \xrightarrow {1\otimes e} X\quad\text{and}\quad Y \xrightarrow{1\otimes u} Y\otimes_{A} X \otimes_{B} Y \xrightarrow {e\otimes 1} Y$$
are both identities.

To emphasize the difference between this definition and that of section \ref{sec:dual-bimod}, in which homotopy of bimodules is taken into account, we sometimes say that a bimodule is \emph{strictly dualizable} if it is dualizable in the sense of this example.
\end{example}

Motivated by the fact that the usual notion of dual is a special case of the notion of adjoint in a bicategory, we introduce the following notation.

\begin{notation}  Let $\mathsf C$ be a bicategory, and let $l\dashv r$ be an adjunction in $\mathsf C$.  We then write
$$l^{\vee}:=r.$$
\end{notation}

The next lemma is well known to many category theorists and homotopy theorists; one proof can be found in \cite[\S III.1, Proposition 1.3]{lewis-may-etal}.  

\begin{lemma}\label{lem:char-dual} Let $\VV$ be a closed monoidal category.  Let $A$ and $B$ be algebras in $\VV$.

The following are equivalent for any $A$-$B$-bimodule $X$.
\begin{enumerate}
\item $X$ is right dualizable.
\item For all right $B$-modules $N$, the map $\ell_{N}: N\tensor _{B}\Map_{B}(X,B)\to \Map_{B}(X,N)$ that is adjoint to 
$N\tensor _{B}\Map_{B}(X,B)\tensor _{A}X\xrightarrow {1\otimes \mathrm{ev}} N$ is a isomorphism.
\end{enumerate}
When $X$ is dualizable, the $B$-$A$-bimodule $\Map_{B}(X,B)$ is a right dual to $X$.
\end{lemma}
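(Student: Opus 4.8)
The plan is to run everything through the tensor--hom adjunction $-\tensor_A X\dashv\Map_B(X,-)$ between $\VV_A$ and $\VV_B$ (recalled just before Proposition~\ref{prop:classification}), together with the uniqueness up to canonical isomorphism of right adjoints. The point to keep in mind throughout is that, for a right $B$-module $N$, the map $\ell_N$ is by its very definition the adjoint transpose under this adjunction of the composite $\big(N\tensor_B\Map_B(X,B)\big)\tensor_A X\xrightarrow{1\tensor\mathrm{ev}}N\tensor_B B\cong N$; this characterization is what I would exploit in both directions.

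For $(1)\Rightarrow(2)$, suppose $X$ is right dualizable with right dual $Y$, coevaluation $u\colon A\to X\tensor_B Y$ and evaluation $e\colon Y\tensor_A X\to B$, as in Example~\ref{ex:bimod}. Identifying right modules with $1$-morphisms $\kk\to A$, so that $\ALG_\VV(\kk,A)=\VV_A$ and $\ALG_\VV(\kk,B)=\VV_B$, I would apply the representable $2$-functor $\ALG_\VV(\kk,-)$ — which preserves adjunctions (cf.\ the remark following Definition~\ref{defn:dual}) — to $X\dashv Y$. This produces an ordinary adjunction
$$\bigadjunction{\VV_A}{\VV_B}{-\tensor_A X}{-\tensor_B Y},$$
whose unit and counit are induced by $u$ and $e$. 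Since $-\tensor_A X$ also has right adjoint $\Map_B(X,-)$, uniqueness of right adjoints yields a canonical natural isomorphism $\theta_N\colon N\tensor_B Y\xrightarrow{\cong}\Map_B(X,N)$. Evaluating at $N=B$ gives $Y\cong B\tensor_B Y\xrightarrow[\cong]{\theta_B}\Map_B(X,B)$ as $B$-$A$-bimodules, which is exactly the final assertion of the lemma; transporting $\theta$ along this isomorphism produces an isomorphism $N\tensor_B\Map_B(X,B)\to\Map_B(X,N)$. It then remains to identify this with $\ell_N$: the comparison $\theta_N$ between the two right adjoints is the adjoint transpose of the counit $(N\tensor_B Y)\tensor_A X\xrightarrow{1\tensor e}N$, which under $Y\cong\Map_B(X,B)$ and $e=\mathrm{ev}$ is precisely the map whose transpose defines $\ell_N$, so the two coincide.

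For the converse $(2)\Rightarrow(1)$ I would build the dual by hand. Set $Y:=\Map_B(X,B)$, let $e:=\mathrm{ev}\colon Y\tensor_A X\to B$, and define the coevaluation as
$$u\colon A\xrightarrow{\eta_A}\Map_B(X,X)\xrightarrow{\ell_X^{-1}}X\tensor_B\Map_B(X,B)=X\tensor_B Y,$$
where $\eta_A$ is the unit encoding the left $A$-action on $X$ and $\ell_X$ is invertible by hypothesis~(2) applied to $N=X$. The triangle identities $(1\tensor e)(u\tensor 1)=1_X$ and $(e\tensor 1)(1\tensor u)=1_Y$ then follow from the defining property of $\ell_X$: since $\mathrm{ev}_X\circ(\ell_X\tensor 1)=1\tensor\mathrm{ev}$, the first composite reduces to $\mathrm{ev}_X\circ(\eta_A\tensor 1)$, which is the $A$-action and hence the identity under $X\cong A\tensor_A X$; the second is dual and uses naturality of $\ell$. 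This exhibits $\Map_B(X,B)$ as a right dual of $X$, giving both $(1)$ and the final claim.

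The routine part is the bookkeeping with the unit and associativity coherence isomorphisms of the closed monoidal structure and the module-structure compatibilities. The one genuinely delicate point, common to both directions, is pinning down the concrete form of the abstract adjunction data — that the whiskered counit in $(1)\Rightarrow(2)$ is $1\tensor e$, and that the triangle identities in $(2)\Rightarrow(1)$ hold — but in each case this collapses to the single observation that $\ell_N$ is the adjoint transpose of $1\tensor\mathrm{ev}$, so the required diagram chase is short once that identity is in hand.
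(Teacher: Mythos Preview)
Your proof is correct and follows essentially the same route as the paper's for the direction it actually argues: both hinge on the uniqueness of right adjoints to $-\tensor_A X$, comparing $-\tensor_B Y$ with $\Map_B(X,-)$ and then specializing to $N=B$. The paper, however, explicitly declines to give a full proof, citing \cite[\S III.1, Proposition 1.3]{lewis-may-etal} and only ``reminding the reader'' of the last statement via this adjoint-uniqueness observation. Your write-up therefore goes substantially further: you supply the identification of the abstract comparison isomorphism with the concrete map $\ell_N$ (which the paper's sketch does not address), and you give an explicit construction for $(2)\Rightarrow(1)$, building $u=\ell_X^{-1}\circ\eta_A$ and checking the triangle identities. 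The first triangle identity is cleanly handled by your observation that $\mathrm{ev}_X\circ(\ell_X\tensor 1)=1\tensor\mathrm{ev}$; the second you dismiss a bit quickly with ``dual and uses naturality of $\ell$''---it does go through, but the cleanest way to see it is to note that since $\ell$ is a natural isomorphism, the entire adjunction $-\tensor_A X\dashv\Map_B(X,-)$ transports along $\ell^{-1}$ to an adjunction $-\tensor_A X\dashv -\tensor_B Y$, whose triangle identities hold automatically, and then to evaluate these at $A$ and $B$ respectively. This is a minor presentational point; the mathematics is sound.
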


\begin{proof}  Since the proof can be found elsewhere, we simply remind the reader how to prove the last statement.
The right adjoint of a functor is uniquely determined up to isomorphism. The functor $-\tensor_A X\colon \VV_A\rightarrow \VV_B$ always has right adjoint $\Map_B(X,-)$. Hence, $Y$ is right dual to $X$ if and only if there is a natural isomorphism of $A$-modules
$$N\tensor_B Y \cong \Map_B(X,N).$$
Plugging in $N = B$, we see that $Y$ must be isomorphic to $\Map_B(X,B)$.
\end{proof}

\bibliographystyle{amsplain}
\bibliography{vstructures}

\providecommand{\bysame}{\leavevmode\hbox to3em{\hrulefill}\thinspace}
\providecommand{\MR}{\relax\ifhmode\unskip\space\fi MR }
\providecommand{\MRhref}[2]{%
  \href{http://www.ams.org/mathscinet-getitem?mr=#1}{#2}
}
\providecommand{\href}[2]{#2}
\begin{thebibliography}{10}

\bibitem{adamek-rosicky}
Ji{\v{r}}{\'{\i}} Ad{\'a}mek and Ji{\v{r}}{\'{\i}} Rosick{\'y}, \emph{Locally
  presentable and accessible categories}, London Mathematical Society Lecture
  Note Series, vol. 189, Cambridge University Press, Cambridge, 1994.
  \MR{1294136 (95j:18001)}

\bibitem{barthel-may-riehl}
Tobias Barthel, J.~P. May, and Emily Riehl, \emph{Six model structures for
  {DG}-modules over {DGA}s: model category theory in homological action}, New
  York J. Math. \textbf{20} (2014), 1077--1159. \MR{3291613}

\bibitem{bhkkrs}
Marzieh Bayeh, Kathryn Hess, Varvara Karpova, Magdalena Kedziorek, Emily Riehl,
  and Brooke Shipley, \emph{Left-induced model category structures on diagram
  categories}, Contemporary Math. \textbf{641} (2015), 49--81.

\bibitem{benabou}
Jean B{\'e}nabou, \emph{Introduction to bicategories}, Reports of the {M}idwest
  {C}ategory {S}eminar, Springer, Berlin, 1967, pp.~1--77. \MR{0220789 (36
  \#3841)}

\bibitem{berglund-hess:hhg}
Alexander Berglund and Kathryn Hess, \emph{Homotopic {H}opf-{G}alois extensions
  revisited}, preprint arXiv:1412.7072 [math.AT], 2014.

\bibitem{borceux}
Francis Borceux, \emph{Handbook of categorical algebra. 2}, Encyclopedia of
  Mathematics and its Applications, vol.~51, Cambridge University Press,
  Cambridge, 1994, Categories and structures. \MR{1313497 (96g:18001b)}

\bibitem{brzezinski-wisbauer}
Tomasz Brzezinski and Robert Wisbauer, \emph{Corings and comodules}, London
  Mathematical Society Lecture Note Series, vol. 309, Cambridge University
  Press, Cambridge, 2003. \MR{2012570 (2004k:16093)}

\bibitem{ching-riehl}
Michael Ching and Emily Riehl, \emph{Coalgebraic models for combinatorial model
  categories}, Homology Homotopy Appl. \textbf{16} (2014), no.~2, 171--184.
  \MR{3257572}

\bibitem{dugger-shipley}
Daniel Dugger and Brooke Shipley, \emph{Topological equivalences for
  differential graded algebras}, Adv. Math. \textbf{212} (2007), no.~1, 37--61.
  \MR{2319762 (2008e:55025)}

\bibitem{eilenberg}
Samuel Eilenberg, \emph{Abstract description of some basic functors}, J. Indian
  Math. Soc. (N.S.) \textbf{24} (1960), 231--234 (1961). \MR{0125148 (23
  \#A2454)}

\bibitem{fht}
Yves F{\'e}lix, Stephen Halperin, and Jean-Claude Thomas, \emph{Rational
  homotopy theory}, Graduate Texts in Mathematics, vol. 205, Springer-Verlag,
  New York, 2001. \MR{1802847 (2002d:55014)}

\bibitem{hess:hhg}
Kathryn Hess, \emph{Homotopic {H}opf-{G}alois extensions: foundations and
  examples}, New topological contexts for {G}alois theory and algebraic
  geometry ({BIRS} 2008), Geom. Topol. Monogr., vol.~16, Geom. Topol. Publ.,
  Coventry, 2009, pp.~79--132. \MR{2544387 (2010j:55010)}

\bibitem{hess:descent}
\bysame, \emph{A general framework for homotopic descent and codescent},
  preprint arXiv:1001.1556 [math.AT], 2010.

\bibitem{hkrs}
Kathryn Hess, Magdalena Kedziorek, Emily Riehl, and Brooke Shipley, \emph{A
  necessary and sufficient condition for induced model structures}, preprint
  arXiv:1509.08154 [math.AT], 2015.

\bibitem{hess-shipley}
Kathryn Hess and Brooke Shipley, \emph{The homotopy theory of coalgebras over a
  comonad}, Proc. London Math. Soc. \textbf{108} (2014), 484--516.

\bibitem{hovey-book}
Mark Hovey, \emph{Model categories}, Mathematical Surveys and Monographs,
  vol.~63, American Mathematical Society, Providence, RI, 1999. \MR{1650134
  (99h:55031)}

\bibitem{hovey}
\bysame, \emph{Morita theory for {H}opf algebroids and presheaves of
  groupoids}, Amer. J. Math. \textbf{124} (2002), no.~6, 1289--1318.
  \MR{1939787 (2003k:16053)}

\bibitem{hovey-strickland}
Mark Hovey and Neil Strickland, \emph{Comodules and {L}andweber exact homology
  theories}, Adv. Math. \textbf{192} (2005), no.~2, 427--456. \MR{2128706
  (2006e:55007)}

\bibitem{janelidze-tholen}
G.~Janelidze and W.~Tholen, \emph{Facets of descent. {III}. {M}onadic descent
  for rings and algebras}, Appl. Categ. Structures \textbf{12} (2004), no.~5-6,
  461--477. \MR{2107397 (2005i:13019)}

\bibitem{karpova}
Varvara Karpova, \emph{Homotopic {H}opf-{G}alois extensions of commutative
  differential graded algebras}, Ph.D. thesis, EPFL, 2014.

\bibitem{lewis-may-etal}
L.~G. Lewis, Jr., J.~P. May, M.~Steinberger, and J.~E. McClure,
  \emph{Equivariant stable homotopy theory}, Lecture Notes in Mathematics, vol.
  1213, Springer-Verlag, Berlin, 1986, With contributions by J. E. McClure.
  \MR{866482 (88e:55002)}

\bibitem{linton}
F.~E.~J. Linton, \emph{Coequalizers in categories of algebras}, Sem. on
  {T}riples and {C}ategorical {H}omology {T}heory ({ETH}, {Z}\"urich, 1966/67),
  Springer, Berlin, 1969, pp.~75--90. \MR{0244341 (39 \#5656)}

\bibitem{mesablishvili}
Bachuki Mesablishvili, \emph{Monads of effective descent type and
  comonadicity}, Theory Appl. Categ. \textbf{16} (2006), No. 1, 1--45
  (electronic). \MR{MR2210664 (2006m:18002)}

\bibitem{neeman}
Amnon Neeman, \emph{Derived categories and {G}rothendieck duality},
  Triangulated categories, London Math. Soc. Lecture Note Ser., vol. 375,
  Cambridge Univ. Press, Cambridge, 2010, pp.~290--350. \MR{2681711
  (2012h:14038)}

\bibitem{priddy}
Stewart~B. Priddy, \emph{Koszul resolutions}, Trans. Amer. Math. Soc.
  \textbf{152} (1970), 39--60. \MR{0265437 (42 \#346)}

\bibitem{ravenel}
Douglas~C. Ravenel, \emph{Complex cobordism and stable homotopy groups of
  spheres}, Pure and Applied Mathematics, vol. 121, Academic Press, Inc.,
  Orlando, FL, 1986. \MR{860042 (87j:55003)}

\bibitem{rickard:morita}
Jeremy Rickard, \emph{Morita theory for derived categories}, J. London Math.
  Soc. (2) \textbf{39} (1989), no.~3, 436--456. \MR{1002456 (91b:18012)}

\bibitem{rickard:derived}
\bysame, \emph{Derived equivalences as derived functors}, J. London Math. Soc.
  (2) \textbf{43} (1991), no.~1, 37--48. \MR{1099084 (92b:16043)}

\bibitem{riehl}
Emily Riehl, \emph{Categorical homotopy theory}, New Mathematical Monographs,
  Cambridge University Press, 2014.

\bibitem{rognes}
John Rognes, \emph{Galois extensions of structured ring spectra. {S}tably
  dualizable groups}, Mem. Amer. Math. Soc. \textbf{192} (2008), no.~898,
  viii+137. \MR{2387923 (2009c:55007)}

\bibitem{schwede-shipley2}
Stefan Schwede and Brooke Shipley, \emph{Stable model categories are categories
  of modules}, Topology \textbf{42} (2003), no.~1, 103--153. \MR{1928647
  (2003g:55034)}

\bibitem{schwede-shipley}
Stefan Schwede and Brooke~E. Shipley, \emph{Algebras and modules in monoidal
  model categories}, Proc. London Math. Soc. (3) \textbf{80} (2000), no.~2,
  491--511. \MR{1734325 (2001c:18006)}

\bibitem{shipley}
Brooke Shipley, \emph{Morita theory in stable homotopy theory}, Handbook of
  tilting theory, London Math. Soc. Lecture Note Ser., vol. 332, Cambridge
  Univ. Press, Cambridge, 2007, pp.~393--411. \MR{2384618 (2009f:16006)}

\bibitem{takeuchi}
Mitsuhiro Takeuchi, \emph{Morita theorems for categories of comodules}, J. Fac.
  Sci. Univ. Tokyo Sect. IA Math. \textbf{24} (1977), no.~3, 629--644.
  \MR{0472967 (57 \#12646)}

\bibitem{watts}
Charles~E. Watts, \emph{Intrinsic characterizations of some additive functors},
  Proc. Amer. Math. Soc. \textbf{11} (1960), 5--8. \MR{0118757 (22 \#9528)}

\end{thebibliography}
\end{document}